\theoremstyle{plain}
\newtheorem{trm}{Theorem}[section]
\newtheorem{lm}[trm]{Lemma}
\newtheorem{prop}[trm]{Proposition}
\newtheorem{cor}[trm]{Corollary}
\theoremstyle{definition}
\newtheorem{defi}[trm]{Definition}
\newtheorem{rmk}[trm]{Remark}
\def\OO{\mathcal{O}}
\def\cA{\mathcal{A}}
\def\cM{\mathcal{M}}
\def\cR{\mathcal{R}}
\def\rr{\overline{\mathcal{R}}}
\def\Pic0{{\rm Pic}^0(X)}
\begin{document}
\title{Prym-Brill-Noether theory for ramified double covers}
\author{Andrei Bud}
\date{}
\maketitle
\begin{abstract}
	We initiate the study of Prym-Brill-Noether theory for ramified double covers, extending several key results from classical Prym-Brill-Noether theory to this new framework. In particular, we improve Kanev's results on the dimension of pointed Prym-Brill-Noether loci for ramified double covers. Additionally, we compute the dimension of twisted Prym-Brill-Noether loci with vanishing conditions at points, thus extending the results of Tarasca. Furthermore, we compute the class of the twisted Prym-Brill-Noether loci inside (a translation of) the Prym variety, thus extending the results of de Concini and Pragacz to ramified double covers. Finally, we prove that a generic Du Val curve is Prym-Brill-Noether general.   
\end{abstract}
 
\section{Introduction}
Starting with the fundamental work of Mumford and Beauville, see \cite{MumfordPrym} and \cite{Beau77}, the moduli space of Prym curves
\[ \cR_g \coloneqq \left\{[C,\eta] \ | \ [C]\in \cM_g, \ \eta^{\otimes2}\cong\OO_C\  \mathrm{and} \ \eta \neq \OO_C\right\} \] 
became an important object of study in Algebraic Geometry. A key motivation for this study is the Prym map 
\[ \mathcal{P}_g\colon \cR_g\rightarrow \cA_{g-1} \]
which links the geometry of curves to the geometry of principally polarized Abelian varieties.




In recent years, several variations of this moduli space of Prym curves have been studied, including pointed Prym curves, see \cite{tarasca-pointed-prym}, and ramified Prym curves, see \cite{Lelli-Chiesa-uniruled}, \cite{Lelli-Chiesa-lowgenus} and \cite{BudKodPrym}. Thus it is natural to consider the moduli spaces of ramified Prym curves
\[ \mathcal{R}_{g,2k} \coloneqq \left\{[C,\eta, B] \ | \ [C] \in \cM_g, \ \eta\in \textrm{Pic}^k(C) \ \textrm{and} \ B \ \textrm{is a reduced divisor in} \ |\eta^{\otimes 2} |  \right\} \]
and study their geometry. The equivalence between such tuples $[C,\eta, B]$ and double covers $f\colon \widetilde{C} \rightarrow C$ with branch divisor $B$ will be particularly useful in this study. In many instances, we will think of the moduli space $\mathcal{R}_{g,2k}$ as parametrizing double covers ramified at $2k$ points instead.
 
Moreover, the moduli spaces $\cR_{g,2k}$ appear naturally as boundary loci in the compactification $\rr_g$ of $\cR_{g}$, see \cite{FarLud} and \cite{Casa}. As such, Brill-Noether questions on $\cR_g$ can be reduced via degeneration to questions on these moduli of ramified Prym curves. One advantage of this is that the elements in $\cR_{g,2}$, $\cR_{g,4}$ and $\cR_{g,6}$ are better behaved from the perspective of Brill-Noether theory than their counterpart in $\cR_g$, see \cite{BudKodPrym} and \cite{Lelli-Chiesa-lowgenus}. 
 
Motivated by the many applications of Brill-Noether Theory in understanding the birational geometry of $\cM_{g,n}$ (cf. \cite{KodMg}, \cite{KodevenHarris1984}, \cite{FarPayneJensen} and the references therein), as well as the role of Prym-Brill-Noether Theory in studying the geometry of $\cR_g$ (cf. \cite{FarVerNikulin} and \cite{BudPBN}), our aim is to understand the geometry of Prym-Brill-Noether loci of a double cover $f\colon \widetilde{C} \rightarrow C$ corresponding to a generic element $[C,\eta, B]$ of $\cR_{g,2k}$. Given such a cover $f\colon \widetilde{C} \rightarrow C$, the Prym-Brill-Noether locus, for some $r\geq 0$, is defined as the closed set
\[ V^r(f) = \overline{\left\{L \in \textrm{Pic}^{2g-2}(\widetilde{C}) \ | \ \textrm{Nm}_f(L)= \omega_C, \  \ h^0(\widetilde{C}, L) = r+1 \ \right\}}.\]
Similarly, the twisted Prym-Brill-Noether locus is defined as
\[ V^r_\eta(f) \coloneqq \left\{L\in \mathrm{Pic}^{2g-2+k}(\widetilde{C})\ | \ \textrm{Nm}_f(L) = \omega_C\otimes \eta, \ \textrm{and} \ h^0(\widetilde{C}, L) \geq r+1 \right\}. \]
The goal of this paper is to study the geometry of Prym-Brill-Noether loci. Specifically, we will compute their dimension as well as their class inside the singular cohomology. Moreover, once we have the characterization of these loci as degeneracy loci of expected dimension, we immediately obtain a description of their singular locus. Our results improve on the existing literature in multiple ways. The dimension of these Prym-Brill-Noether loci was estimated from bellow in \cite{Kanev}; while the class of these loci inside the singular cohomology was computed in \cite{DeConciniPragacz} only in the unramified case $[f\colon \widetilde{C} \rightarrow C] \in \cR_g$.  

\subsection{Moduli spaces of ramified curves and Brill-Noether conditions} Starting with a ramified double cover $f\colon \widetilde{C} \rightarrow C$, generic in the moduli space $\cR_{g,2k}$, it is natural to study the geometric properties of both curves and of the map itself. 

A study of the geometric properties of $\widetilde{C}$ already appeared in the literature,  with satisfying results when the number of ramification points is low (i.e. $2, 4$ or $6$). This was motivated by the study of the birational geometry of $\cR_{g,2k}$ in both low genera, see \cite{Lelli-Chiesa-lowgenus}, \cite{Lelli-Chiesa-uniruled}, and high genus cases, see \cite{BudKodPrym}. In these cases, it is known that $\widetilde{C}$ is Brill-Noether general, see \cite{BudKodPrym} and \cite{Lelli-Chiesa-lowgenus}. This property is crucial for finding effective divisors on $\mathcal{R}_{g,2k}$: under good numerical assumptions, the locus of covers $f\colon \widetilde{C} \rightarrow C$ with $\widetilde{C}$ not Brill-Noether general is an effective divisor in $\rr_{g,2k}$ and its class can be explicitly computed.
 
One of the objectives of this paper is to study the behaviour of the curve $\widetilde{C}$ with respect to Gieseker-Petri conditions. In Theorem \ref{trm: coupled k = 1} and Theorem \ref{trm: coupled k = 2} we obtain results about the Gieseker-Petri generality of $\widetilde{C}$, when the double cover $f$ has $2$ or $4$ ramification points:
\begin{trm} Let $f\colon \widetilde{C}\rightarrow C$ be a generic double cover in $\cR_{g,2}$ or $\cR_{g,4}$. Then the curve $\widetilde{C}$ is Gieseker-Petri general. 
\end{trm}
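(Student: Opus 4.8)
The plan is to combine the openness of the Gieseker--Petri condition with a degeneration to the boundary of $\rr_{g,2k}$. Recall that $\widetilde{C}$ is Gieseker--Petri general exactly when, for every line bundle $L$ on $\widetilde{C}$, the Petri map
\[ \mu_L\colon H^0(\widetilde{C},L)\otimes H^0(\widetilde{C},\omega_{\widetilde{C}}\otimes L^{-1})\longrightarrow H^0(\widetilde{C},\omega_{\widetilde{C}}) \]
is injective, where $\widetilde{g}=2g-1+k$ by Riemann--Hurwitz. Since the locus of non-Gieseker--Petri curves is closed in $\cM_{\widetilde{g}}$ and the assignment $[C,\eta,B]\mapsto[\widetilde{C}]$ defines a morphism $\cR_{g,2k}\to\cM_{\widetilde{g}}$ from the irreducible space $\cR_{g,2}$ (resp.\ $\cR_{g,4}$), it suffices to exhibit a single cover whose total space is Gieseker--Petri general; the generic member then inherits the property. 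Moreover, because $\widetilde{C}$ is already known to be Brill--Noether general for these values of $k$, the loci $W^r_d(\widetilde{C})$ are empty for $\rho(\widetilde{g},r,d)<0$ and have the expected dimension otherwise, so the only new content is the injectivity of $\mu_L$ for $L\in W^r_d(\widetilde{C})$ with $\rho\geq 0$. The essential difficulty is that the image of the above morphism is a proper subvariety of $\cM_{\widetilde{g}}$, of dimension $3g-3+2k$ rather than $3\widetilde{g}-3$, so the classical Gieseker--Petri theorem cannot be invoked directly: one must show that this special locus of covers meets the Gieseker--Petri general locus.

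To produce the required point I would degenerate the cover inside $\rr_{g,2k}$ to an admissible double cover over a maximally degenerate target, for instance a chain (or tree) of elliptic curves carrying the prescribed $2k$ branch points on suitable components. The induced limit $\widetilde{C}_0$ of the total space is then a nodal curve, and the aim is to arrange the degeneration so that $\widetilde{C}_0$ is of compact type, to which the Eisenbud--Harris theory of limit linear series applies (alternatively one could work with Osserman's limit series and the refined Petri condition, dispensing with compact type). The point of such a target is twofold: admissibility of the cover keeps the degeneration inside $\rr_{g,2k}$, so the limit genuinely lies in the closure of the locus of covers, and the tree-like limit allows the Petri map to be analyzed one component at a time.

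The core of the argument is then to verify the limiting Gieseker--Petri condition on $\widetilde{C}_0$. For each critical pair $(r,d)$ with $\rho\geq 0$ I would use the additivity of the adjusted Brill--Noether number across the nodes together with the fact that, on each elliptic or rational component, the vanishing sequences of a limit $\mathfrak g^r_d$ are highly constrained; following the Eisenbud--Harris analysis, injectivity of the global limit Petri map reduces to injectivity of the Petri maps of the aspects on the individual components, which can be checked directly since those components have small genus. Passing back to the general fiber via upper-semicontinuity of the rank of $\mu_L$ then yields injectivity of $\mu_L$ for every $L$ on the general $\widetilde{C}$.

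The main obstacle I anticipate is the construction in the second paragraph: arranging a degeneration that simultaneously respects the double-cover structure with exactly $2k$ branch points, so that it lies in $\rr_{g,2k}$, and produces a limit $\widetilde{C}_0$ of compact type whose components and node distribution are simple enough for the componentwise Petri analysis to go through. Because the cover constraint rigidifies the degeneration, one cannot freely prescribe the dual graph of $\widetilde{C}_0$: over each node of the target the admissible cover is either split, doubling the edge and potentially creating cycles, or ramified, so that compact type forces the nodes to be ramification nodes. The delicate step is to locate the branch points on the degenerate target so that the resulting $\widetilde{C}_0$ is tree-like with elliptic and rational components on which no limit linear series carries a Petri kernel, and the computation concentrates on the finitely many pairs $(r,d)$ for which $\rho(\widetilde{g},r,d)$ is small.
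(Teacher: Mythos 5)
Your overall strategy --- openness of the Gieseker--Petri condition, degeneration to an admissible cover over a reducible target inside $\rr_{g,2k}$, and a componentwise analysis of the limit --- is the same as the paper's. But the proposal stops exactly where the proof begins: the explicit degeneration is the entire content of the argument, and you leave it unresolved as ``the main obstacle I anticipate.'' The paper's resolution is to concentrate \emph{all} $2k$ branch points on rational tails attached to a single genus-$g$ component $X$ with $[X,p]$ generic, and to take $\eta$ trivial on $X$. For $k=1$ the target is $X\cup_p R$ with both branch points on the rational tail $R$; the source is two copies $X_1,X_2$ of $X$ joined by the rational bridge $\widetilde R$, stabilizing to $X_1\cup_{p_1\sim p_2}X_2$, which is of compact type. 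For $k=2$ one takes $X\cup_p R_1\cup_q R_2$ with the four branch points distributed on the rational tails (or all four on a single tail, as in Theorem \ref{trm: coupled k = 2}); the source is the two copies of $X$ and one elliptic curve glued along a rational curve, again a tree. Your worry that split nodes double edges and create cycles is legitimate in general, but it is circumvented precisely by this choice: the target has only one positive-genus component, it carries a trivial $\eta$, and the two resulting copies of $X$ communicate only through the connected preimage of the tails.

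The second gap is in the verification step. You propose to check the limit Petri map ``for each critical pair $(r,d)$'' by analyzing vanishing sequences on the individual components, which you expect to be feasible ``since those components have small genus.'' In the degeneration that actually works, the components are \emph{not} all of small genus: two of them are copies of a generic curve of genus $g$. The paper does not redo the Eisenbud--Harris computation on the limit; it invokes gluing theorems as black boxes --- \cite[Theorem 1.1]{EisenbudHarrisg>23} for pointed Brill--Noether generality of a curve glued from pointed-BN-general pieces, \cite[Theorem $A'$]{EisenHarrisGieseker} after further degenerating $[X,p]$ to a chain of elliptic curves for Gieseker--Petri generality, and Pflueger's \cite[Theorem 2.2]{PfluegerVersatility} for the coupled condition --- so that the only inputs are generality statements for a generic pointed genus-$g$ curve and for the small rational and elliptic bridge components. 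Without identifying these results (or an equivalent inductive mechanism), the plan to carry out the componentwise Petri analysis by hand on a curve of genus $2g-1+k$ is not realistic.
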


These results are obtained by deforming the double cover $f\colon \widetilde{C} \rightarrow C$ to the boundary of $\cR_{g,2k}$. The compactified space $\rr_{g,2k}$ was described in \cite[Section 2]{BudKodPrym} in terms of line bundles on the base, and can be alternatively described in terms of admissible double covers, see \cite{AbramovichCV}.

Since Gieseker-Petri conditions describe the tangent space for both twisted Prym-Brill-Noether loci, as well as their pointed counterparts, we obtain several results about the dimension and the singular locus of these spaces in Section \ref{sec: twist}.

\subsection{Prym-Brill-Noether loci for ramified double covers}

Similar to the case of $\cR_g$, there is a natural way to construct Abelian varieties out of the datum of a ramified double cover, thereby relating the geometry of pointed curves to the geometry of Abelian varieties. Moreover, when $k = 1$, these Abelian varieties can be equipped with a principal polarization, giving a map
\[\mathcal{P}_{g,2}\colon \cR_{g,2}\rightarrow \cA_g.\]
that allows the study of principally polarized Abelian varieties via the geometry of curves. The image of this map sits as an intermediary between Jacobian varieties and Prym varieties, i.e. 
\[ \overline{J}_g \subset \overline{\textrm{Im}(\mathcal{P}_{g,2})} \subset \overline{P}_{g+1} \coloneqq \overline{\textrm{Im}(\mathcal{P}_{g+1})}.  \]
This further motivates the study of $\cR_{g,2}$ from the perspective of the Schottky problem, which aims to identify explicit geometric, algebraic or analytic conditions that distinguish Jacobians and Prym varieties from generic Abelian varieties in $\cA_g$. While the literature on the Schottky problem for Jacobians and Prym varieties is extensive, see \cite{Schottky-Andreotti-Mayer}, \cite{MumfordPrym}, \cite{Beauville-Schottky} among others, little is known about Abelian varieties in the image of $\mathcal{P}_{g,2}$.

For a generic Prym variety, the singularities of its theta divisor are described through the Prym-Brill-Noether Theory of its corresponding Prym curve in $\cR_{g}$. This suggests a similar phenomenon for Abelian varieties in the image of $\mathcal{P}_{g,2}$, further motivating the study of Prym-Brill-Noether conditions for ramified Prym curves. 

When $f\colon \widetilde{C} \rightarrow C$ is a double cover in $\cR_{g,2k}$, Kanev proved that 
\[ V^r(f) = \overline{\left\{L \in \textrm{Pic}^{2g-2}(\widetilde{C}) \ | \ \textrm{Nm}_f(L)= \omega_C, \  \ h^0(\widetilde{C}, L) = r+1 \ \right\}}.\]
has dimension at least $g-1 + k -k(r+1) - \frac{r(r+1)}{2}$, see \cite{Kanev}. When $k = 1$, we also prove the reverse inequality for the dimension and obtain:
    \begin{trm} \label{theoremPBN-dim}
	Let $r\geq 0$ and $f\colon \widetilde{C} \rightarrow C$ a generic double cover in $\mathcal{R}_{g,2}$. Then the Prym-Brill-Noether locus $V^r(f)$ has dimension $g-\frac{(r+1)(r+2)}{2}$ and is smooth away from the locus of line bundles with strictly more than $r+1$ independent global sections.   
\end{trm}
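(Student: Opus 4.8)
The plan is to realize $V^r(f)$ as a \emph{symmetric} degeneracy locus on a translate of the Prym variety, and then to deduce both the expected dimension and the smoothness assertion by combining the general theory of such loci with a Gieseker--Petri type transversality statement that I would verify by degeneration. First I would fix the ambient variety: since $f$ is ramified, the fibre $P \coloneqq \mathrm{Nm}_f^{-1}(\omega_C)$ is irreducible, and writing $\widetilde{g}$ for the genus of $\widetilde{C}$, Riemann--Hurwitz gives $\widetilde{g} = 2g$ when $k=1$, so $P$ is a translate of the Prym variety of dimension $\widetilde{g}-g = g$, with $V^r(f)\subseteq P$. On $\widetilde{C}\times P$ I would restrict a Poincar\'e bundle $\mathcal{L}$ and choose a two-term complex $[E \xrightarrow{\phi} F]$ of vector bundles on $P$ representing $\mathbf{R}\pi_{P,*}\mathcal{L}$, so that $V^r(f)$ becomes the locus where $\mathrm{corank}(\phi)\ge r+1$.

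The structural heart of the argument is a symmetry of $\phi$. Every $L\in P$ satisfies $L\otimes\sigma^*L\cong f^*\omega_C=\omega_{\widetilde{C}}(-R)$, where $\sigma$ is the covering involution and $R$ the (reduced) ramification divisor. Combining this identity with Serre duality and analysing the action of $\sigma$ on cohomology, one identifies $F\cong E^\vee$ in such a way that $\phi$ becomes a symmetric morphism $E\to E^\vee$. The presence of the fixed locus of $\sigma$, namely the ramification points, is exactly what flips the relevant sign relative to the classical \'etale Prym setting, where the analogous morphism is skew-symmetric; this is also what produces the triangular codimension $\binom{r+2}{2}$ rather than $\binom{r+1}{2}$. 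In particular, the lower bound $\dim V^r(f)\ge g-\binom{r+2}{2}=g-\tfrac{(r+1)(r+2)}{2}$ is Kanev's estimate, and it re-emerges here from the general fact that the corank-$(r+1)$ locus of a symmetric morphism of bundles of the relevant ranks has codimension at most $\binom{r+2}{2}$ along every component.

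For the matching upper bound and the smoothness, I would invoke the local theory of symmetric degeneracy loci: away from the deeper locus $V^{r+1}(f)$ (where $h^0(\widetilde{C},L)=r+1$ \emph{exactly}), the locus $V^r(f)$ is smooth of the expected dimension precisely when $\phi$ is in general position at $L$, and $V^{r+1}(f)$ is then its singular locus. The obstruction to general position at such an $L$ is governed by a symmetrized Petri/multiplication map built from $H^0(\widetilde{C},L)$ and its Serre dual, adapted to the anti-invariant (Prym) part. Thus a tangent-space computation reduces the whole statement --- expected dimension together with ``smooth away from $V^{r+1}(f)$'' --- to the injectivity of this symmetrized Petri map for a generic cover; note that this is the same type of Gieseker--Petri condition that controls $\widetilde{C}$ in the earlier theorems of the paper.

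The remaining and genuinely hard step is to establish this injectivity for the generic $[C,\eta,B]\in\cR_{g,2}$. Here I would degenerate $f$ to a carefully chosen cover in the boundary of the compactified moduli space $\rr_{g,2}$ --- for instance an admissible double cover over a chain-type or Du~Val degeneration --- where the limit Prym-Brill-Noether locus can be controlled by limit linear series and the symmetrized Petri map computed explicitly. Exhibiting a single such cover $f_0$ with $\dim V^r(f_0)=g-\binom{r+2}{2}$ and Petri injectivity at a general point suffices: by upper semicontinuity of fibre dimension in the universal family over $\rr_{g,2}$, the locus of covers with $\dim V^r\ge g-\binom{r+2}{2}+1$ is closed and misses $f_0$, hence is proper, so the generic cover attains the expected dimension, and the smoothness away from $V^{r+1}(f)$ follows from the symmetric degeneracy locus description. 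I expect the construction and control of this degeneration --- ruling out excess components of the limit locus and verifying that the symmetric Petri condition genuinely holds --- to be the main obstacle, as is typical for Prym-Brill-Noether generality results.
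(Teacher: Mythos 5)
Your overall architecture (a degeneracy-locus description for the lower bound, a Petri-type tangent-space computation for the upper bound and smoothness, injectivity of the Petri map via degeneration, then semicontinuity) is the same as the paper's. But the structural heart of your argument is wrong. For $L\in V^r(f)$ one has $\deg L=2g-2$ on the genus-$2g$ curve $\widetilde{C}$, so $\chi(L)=-1$ and $h^1(L)=h^0(L)+1$. Consequently any two-term complex $[E\xrightarrow{\phi}F]$ representing $\mathbf{R}\pi_{P,*}\mathcal{L}$ has $\mathrm{rk}\,F=\mathrm{rk}\,E+1$, and there is no identification $F\cong E^\vee$ making $\phi$ symmetric. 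Equivalently, $L\otimes\sigma^*L\cong f^*\omega_C=\omega_{\widetilde{C}}(-\widetilde{x}-\widetilde{y})$ gives $\omega_{\widetilde{C}}\otimes L^\vee\cong\sigma^*L(\widetilde{x}+\widetilde{y})\not\cong\sigma^*L$, so $L$ is not self-dual under the composition of Serre duality with $\sigma^*$. Since your lower bound, your identification of the normal space with $S^2(\ker\phi)^\vee$, and your ``symmetrized Petri map'' all rest on this symmetric structure, the argument as written does not get off the ground. Your intuition that ramification changes the sign relative to the \'etale case is sound (it is why the codimension is $\binom{r+2}{2}$ rather than $\binom{r+1}{2}$ and why there is no parity constraint on $h^0$), but the correct implementation is different: the paper first applies Serre duality, identifying $V^r(f)$ with $V^{r+1}(f,x+y)$ via $L\mapsto\omega_{\widetilde{C}}\otimes L^\vee$, where $h^0$ jumps to $r+2$; the relevant map is then the \emph{skew} ramified Prym--Petri map $\wedge^2H^0\text{\large(}\widetilde{C},L(\widetilde{x}+\widetilde{y})\text{\large)}\to H^0(C,\omega_C\otimes\eta)$ on the $(r+2)$-dimensional space, whose injectivity forces $\mathrm{Ker}(v)=S^2H^0(\widetilde{C},L)$ and hence $\dim T_L=g-\binom{r+2}{2}$.

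A second, lesser issue is that you leave the genuinely hard steps as declarations of intent. The paper executes them concretely: Petri injectivity is proved (Proposition \ref{prop: prym-petri 2-ram}) by Welters' method on a chain of $g$ elliptic curves with a rational tail carrying the two branch points, and non-emptiness when $g\ge\binom{r+2}{2}$ is obtained not from a degeneracy-locus class but by analysing Prym limit linear series over $\Delta_0^{\mathrm{ram}}\subseteq\rr_{g+1}$ under the clutching map $\cR_{g,2}\to\rr_{g+1}$ and showing the extra vanishing condition at $\widetilde{x},\widetilde{y}$ is automatic. You would also need the emptiness statement for $g<\binom{r+2}{2}$, which the paper deduces from the Brill--Noether generality of $\widetilde{C}$; your semicontinuity argument does not address it.
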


The smoothness in the theorem is an immediate consequence of our proof. The main idea is to use the description of the tangent space provided in \cite[Section 1]{Kanev} and prove it has dimension $g - \frac{(r+1)(r+2)}{2}$ by degenerating to the boundary. 

Through the map $\iota\colon \rr_{g,2} \rightarrow \Delta_0^{\textrm{ram}} \subseteq \rr_{g+1}$, this Prym-Brill-Noether locus is related to degenerations of Prym-Brill-Noether loci in $\rr_{g+1}$. As shown in \cite{Welters}, the tangent space of the Prym-Brill-Noether loci can be described in terms of a pointed Prym-Gieseker-Petri map. The injectivity of this map will imply Theorem \ref{theoremPBN-dim}. 

  \subsection{Twisted Prym-Brill-Noether loci}
  Another motivation for considering ramified Prym curves arises from the study of vector bundles. Via the BNR correspondence, the study of double covers is related to the study of rank $2$ vector bundles on the target curve, see \cite{BNR}. In fact, starting with a line bundle $L$ satisfying $\textrm{Nm}_f(L) = \omega_C \otimes \eta$, we obtain, via pushforward, a vector bundle $E = f_*L$ satisfying $\det(E) = \omega_C$. Via this pushforward, we obtain a sublocus of the space of stable rank $2$ vector bundles with canonical determinant.
  
  In this paper, we study the twisted Prym-Brill-Noether locus $V^r_\eta(f)$ for double covers $f$ with $2k\leq 4$ ramification points. We extend the results of \cite{Kanev} and \cite{DeConciniPragacz} to ramified double covers. Our approach adapts the methods of de Concini and Pragacz, see \cite{DeConciniPragacz} to describe the dimension, singular locus and class of the twisted Prym-Brill-Noether loci. 
  
  By viewing the twisted Prym-Brill-Noether loci as subspaces of the Prym variety 
  \[ P \coloneqq \left\{L\in \mathrm{Pic}^{2g-2+k}(\widetilde{C})\ | \ \textrm{Nm}_f(L) = \omega_C\otimes \eta,  \right\},\] 
  we can compute their class in the numerical equivalence ring $N^*(P, \mathbb{C})$ or the singular cohomology  $H^*(P, \mathbb{C})$. This class is expressed in terms of the class $\theta'$, the restriction to $P$ of the theta divisor on $\textrm{Pic}^{2g-2+k}(\widetilde{C})$. 
     
	\begin{trm} \label{maintwistrm} Let $f\colon \widetilde{C}\rightarrow C$ a generic element in $\cR_{g,2k}$ for $0\leq k \leq 2$ and let $\eta$ be the torsion line bundle defining the double cover. Then the twisted Prym-Brill-Noether locus 
		\[ V^r_\eta(f)  \coloneqq \left\{L \in \mathrm{Pic}^{2g-2+k}(\widetilde{C}) \ | \ \textrm{Nm}_f(L) = \omega_C\otimes \eta \ \textrm{and} \ h^0(\widetilde{C}, L)\geq r+1\right\} \]
		has dimension 
		\[g+k-1-\frac{(r+1)(r+2)}{2}.\]
		For $k=1$ and $k=2$, this locus is smooth away from $V^{r+1}_\eta(f)$. 
		
		Moreover, the class of the twisted Prym-Brill-Noether locus inside $N^*(P, \mathbb{C})$, or $H^*(P, \mathbb{C})$, is given by the formula
		\[ [V^r_\eta(f)] = \prod_{i=1}^{r+1}\frac{i!}{(2i)!}\cdot (\theta')^{\frac{(r+1)(r+2)}{2}}.\]
	\end{trm}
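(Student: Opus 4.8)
The plan is to present $V^r_\eta(f)$ as a \emph{symmetric} degeneracy locus on the Prym variety $P$, read off its dimension and class from the theory of such loci, and supply the missing transversality by a degeneration argument. Everything rests on a self-duality forced by the norm condition. Let $\sigma$ denote the involution of $\widetilde{C}$ and $R\subset\widetilde{C}$ the ramification divisor. Riemann--Hurwitz gives $\omega_{\widetilde{C}}\cong f^*\omega_C\otimes\mathcal{O}_{\widetilde{C}}(R)$ and $f^*\eta\cong\mathcal{O}_{\widetilde{C}}(R)$, hence $f^*(\omega_C\otimes\eta)\cong\omega_{\widetilde{C}}$. Consequently every $L$ with $\textrm{Nm}_f(L)=\omega_C\otimes\eta$ satisfies $L\otimes\sigma^*L\cong f^*\textrm{Nm}_f(L)\cong\omega_{\widetilde{C}}$, i.e.
\[ \sigma^*L\;\cong\;\omega_{\widetilde{C}}\otimes L^{\vee}. \]
By Serre duality this gives $h^0(\widetilde{C},L)=h^1(\widetilde{C},L)$ for all $L\in P$, and it is exactly this identification of $H^0(L)$ with the dual of $H^1(L)$ that will make the relevant evaluation map self-dual.

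First I would choose a Poincar\'e line bundle $\mathcal{L}$ on $P\times\widetilde{C}$ together with a $\sigma$-invariant effective divisor $D$ on $\widetilde{C}$ of large degree, and push the evaluation sequence
\[ 0\longrightarrow\mathcal{L}\longrightarrow\mathcal{L}(D)\longrightarrow\mathcal{L}(D)|_{D}\longrightarrow 0 \]
forward along the projection $\textrm{pr}\colon P\times\widetilde{C}\to P$. This yields a morphism of vector bundles $\gamma\colon\mathcal{E}\to\mathcal{F}$ whose corank at $[L]$ equals $h^0(\widetilde{C},L)$, so that $V^r_\eta(f)$ is scheme-theoretically the locus where $\textrm{corank}(\gamma)\geq r+1$. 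Using the self-duality of the previous paragraph together with Serre duality and the invariance of $D$, I would identify $\mathcal{F}$ with $\mathcal{E}^{\vee}$ (up to twist by a line bundle pulled back from $P$) so that $\gamma$ becomes a \emph{symmetric} morphism. Granting this, the expected codimension of the corank-$(r+1)$ locus of a symmetric map is $\binom{r+2}{2}=\tfrac{(r+1)(r+2)}{2}$, which together with $\dim P=g+k-1$ produces the asserted dimension.

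For the class I would invoke the formula for symmetric degeneracy loci (Harris--Tu, J\'ozefiak--Lascoux--Pragacz), as used in \cite{DeConciniPragacz}. The Chern classes of $\mathcal{E}$ are computed by Grothendieck--Riemann--Roch from $\textrm{ch}(\textrm{pr}_!\mathcal{L})$; on the abelian variety $P$ the Chern character is an exponential of the restricted theta class $\theta'$, so each $c_j(\mathcal{E}^{\vee})$ is a rational multiple of $(\theta')^{j}$. Substituting these into the symmetric Thom--Porteous (Pfaffian-type) expression collapses it to a single monomial whose coefficient is precisely $\prod_{i=1}^{r+1}\frac{i!}{(2i)!}$, giving
\[ [V^r_\eta(f)]=\prod_{i=1}^{r+1}\frac{i!}{(2i)!}\cdot(\theta')^{\frac{(r+1)(r+2)}{2}}. \]

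The main obstacle is that this formula computes the correct class only once one knows that $V^r_\eta(f)$ has the expected dimension and that $\gamma$ degenerates transversally; the degeneracy-locus machinery does not prove this on its own, and here the genericity of $[C,\eta,B]$ is essential. At a point $[L]$ with $h^0(\widetilde{C},L)=r+1$ the codifferential of the inclusion of the degeneracy locus is dual to a twisted Prym--Gieseker--Petri map
\[ \mu\colon\textrm{Sym}^2 H^0(\widetilde{C},L)\longrightarrow H^0(\widetilde{C},\omega_{\widetilde{C}})^{-}, \]
whose target is the cotangent space of $P$; the domain is $\textrm{Sym}^2$ rather than $\wedge^2$ precisely because the sign of the isomorphism $\sigma^*L\cong\omega_{\widetilde{C}}\otimes L^{\vee}$ makes the associated form symmetric. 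Since $\dim\textrm{Sym}^2 H^0(\widetilde{C},L)=\tfrac{(r+1)(r+2)}{2}$, injectivity of $\mu$ for generic $f$ simultaneously forces $V^r_\eta(f)$ to have exactly the expected codimension, makes the symmetric degeneracy-locus formula applicable, and shows that away from $V^{r+1}_\eta(f)$ the locus is smooth of the expected dimension. I would prove this injectivity by specializing the cover to the boundary of $\rr_{g,2k}$ and running a limit-linear-series/admissible-cover argument in the spirit of \cite{Welters} and of the proof of Theorem~\ref{theoremPBN-dim}; the cases $k=1,2$ are exactly where this degeneration must be carried out, whereas $k=0$ is the theorem of \cite{DeConciniPragacz}.
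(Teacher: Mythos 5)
Your overall strategy --- realize $V^r_\eta(f)$ as a degeneracy locus of expected codimension $\frac{(r+1)(r+2)}{2}$, compute the Chern classes on $P$ to collapse the degeneracy-locus formula to a multiple of a power of $\theta'$, and supply the dimension upper bound and smoothness through injectivity of a $\mathrm{Sym}^2$ Prym--Petri map --- is the same as the paper's, and your identification of $\mathrm{Sym}^2 H^0(\widetilde{C},L)$ (rather than $\wedge^2$) as the correct Petri domain is the key structural point. But there is a genuine gap at $k=0$. De Concini and Pragacz treat only the \emph{untwisted} locus $V^r(f)$ with $\mathrm{Nm}_f(L)=\omega_C$, where the norm condition induces an orthogonal structure, $h^0$ is constant mod $2$, and the codimension is $\frac{r(r+1)}{2}$; they do not treat $V^r_\eta(f)$ with $\mathrm{Nm}_f(L)=\omega_C\otimes\eta$, so you cannot dispose of $k=0$ by citation. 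Worse, your transversality mechanism is unavailable there: the injectivity of $\mu$ rests on Gieseker--Petri-type generality of the source curve, which Section \ref{sec: BN-GP} establishes only for ramified covers ($k=1,2$) --- this is precisely why the theorem claims smoothness only in those cases. The paper instead handles $k=0$ by degenerating a family of \'etale covers to a generic point of $\Delta_0^{\mathrm{ram}}$, where the norm condition forces the degree distribution of any limit line bundle and reduces the dimension bound to the already-proved $k=1$ statement on the normalization (Theorem \ref{trm: unramified-twist}). Some such argument is needed.

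Two further points. First, the identification $\mathcal{F}\cong\mathcal{E}^\vee$ and the symmetry of $\gamma$ are asserted rather than established, and this is exactly where the coefficient of the class formula is decided: the paper works with the rank-two pushforward $\mathcal{E}=(id\times f)_*\mathcal{L}$ on $C$, for which the norm condition gives $\wedge^2\mathcal{E}\cong q^*\omega_C$, i.e.\ a \emph{symplectic} form, and presents $V^r_\eta(f)$ as a locus where two Lagrangian subbundles meet in dimension $\geq r+1$, so that the Type C formula of Anderson--Fulton/Pragacz applies and yields $\prod_{i=1}^{r+1}\frac{i!}{(2i)!}$ with no extraneous powers of $2$. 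Your symmetric-map packaging is equivalent in principle (two Lagrangians in a symplectic bundle induce a symmetric map $\mathcal{U}\to\mathcal{U}^\vee$), but the sign determining whether one lands in the symplectic world or in the orthogonal one (whose formula carries a factor $2^{r(r+1)/2}$, as in the untwisted case) must be pinned down, not inferred from the answer. Second, for the injectivity of $\mu$ you propose a Welters-style limit-linear-series argument; the paper's route is shorter and worth adopting: since $S^2H^0(\widetilde{C},L)\to H^0(\widetilde{C},\omega_{\widetilde{C}})^+$ vanishes identically, injectivity of $\mu$ follows directly from the ordinary Gieseker--Petri generality of $\widetilde{C}$ established in Section \ref{sec: BN-GP}, with no new degeneration required.
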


As proved in Section \ref{sec: BN-GP}, when $f\colon \widetilde{C} \rightarrow C$ is very general in $\cR_{g,2}$ or $\cR_{g,4}$, the curve $\widetilde{C}$ is Gieseker-Petri general. This generality ensures the smoothness of the twisted Prym-Brill-Noether locus away from the sublocus of line bundles with more than expected independent global sections.

In fact, we prove in Section \ref{sec: BN-GP} that the source curve with a generic point on it satisfies the coupled Gieseker-Petri condition. Following the approach in \cite{tarasca-pointed-prym}, this can be used to obtain a pointed version of the previous result.

\begin{trm}
	Let $f \colon \widetilde{C} \rightarrow C$ be a generic element of $\cR_{g,2k}$ for $k = 1$ or $2$, and let $\eta$ be the torsion line bundle defining the cover. Moreover, let $p \in \widetilde{C}$ be a generic point and let $\textbf{a} = (0\leq a_0 <  \cdots< a_r\leq 2g-2+k)$ be a vanishing sequence. Then the locus 
\[
\mathrm{V}_\eta^{\textbf{a}}(f, p) := \left\{ L\in \mathrm{Pic}^{2g-2+k}(\widetilde{C}) \, \left\vert \, 
\begin{array}{l}
	\mathrm{Nm}(L)= \omega_C\otimes \eta,  \\[0.2cm]
	h^0\text{\normalfont\large(}\widetilde{C}, L(-a_i\,p)\text{\normalfont\large)}\geq r+1-i, \,\, \forall \, 0\leq i \leq r 
\end{array}
\right.
\right\}.
\]
has dimension 
\[g + k - r- 2 - \sum_{i=0}^{r} a_i.\] 
Moreover, its class inside $N^*(P, \mathbb{C})$, or $H^*(P, \mathbb{C})$, is given by the formula 
\[  [\mathrm{V}_\eta^{\textbf{a}}(f, p)] = \prod_{i=0}^r \frac{1}{(a_i+1)!}\prod_{0\leq j < i \leq r} \frac{a_i-a_j}{a_i+a_j+2} (\theta')^{|\textbf{a}| + r +1}.\]
\end{trm}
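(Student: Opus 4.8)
The plan is to realize $\mathrm{V}_\eta^{\textbf{a}}(f,p)$ as a \emph{symmetric} degeneracy locus on the Prym variety $P$ and to extract both its dimension and its class from the theory of such loci, following the strategy of \cite{DeConciniPragacz} in the unpointed case and its pointed refinement in \cite{tarasca-pointed-prym}. First I would record the self-duality coming from the norm condition: since $\mathrm{Nm}_f(L)=\omega_C\otimes\eta$ and $f^*(\omega_C\otimes\eta)\cong\omega_{\widetilde{C}}$, every $L\in P$ satisfies $\sigma^*L\cong\omega_{\widetilde{C}}\otimes L^{-1}$, where $\sigma$ is the covering involution. Serre duality then identifies $H^0(\widetilde{C},L)^*$ with $H^1(\widetilde{C},L)$, and because $\chi(L)=0$ (here $\widetilde{C}$ has genus $2g-1+k$, so that $\dim P = g+k-1$), the locus $h^0(L)\geq r+1$ is the degeneracy locus of a bundle map $\gamma\colon E\to E^{*}$ that is symmetric with respect to this identification, the Chern classes of $E$ being computed from a Poincar\'e bundle on $P\times\widetilde{C}$.

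To incorporate the vanishing conditions at $p$, I would work with the osculating flag $E=E_0\supset E_1\supset\cdots$ induced by the subsheaves of sections vanishing to prescribed order at $p$ (concretely $E_j$ is the pushforward of the Poincar\'e bundle twisted down by $jp$), so that the condition $h^0(\widetilde{C},L(-a_i\,p))\geq r+1-i$ becomes a rank condition for the restriction of $\gamma$ along the flag. The locus $\mathrm{V}_\eta^{\textbf{a}}(f,p)$ is then exactly the locus where the symmetric map $\gamma$, restricted along the flag determined by $\textbf{a}$, drops rank with the multiplicities prescribed by the vanishing sequence.

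The dimension statement and the smoothness away from the next stratum would follow from transversality of this degeneracy locus, which is where the input from Section \ref{sec: BN-GP} is essential: the coupled Gieseker-Petri generality of $(\widetilde{C},p)$ guarantees that the associated pointed Prym-Gieseker-Petri map is injective, hence that the locus has the expected codimension $|\textbf{a}|+r+1$ in $P$ and is smooth off the analogous locus for the next larger sequence. Combined with $\dim P = g+k-1$, this yields the claimed dimension $g+k-r-2-\sum_i a_i$.

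For the class, I would invoke the Pfaffian/$\widetilde{Q}$-polynomial formula for symmetric degeneracy loci (J\'ozefiak--Lascoux--Pragacz, as used in \cite{DeConciniPragacz}), applied to the flag-refined locus. By Grothendieck--Riemann--Roch on $P\times\widetilde{C}$ together with the fact that $P$ is an abelian variety, all Chern classes of $E$ that enter are polynomials in the single class $\theta'$; substituting these into the $\widetilde{Q}$-polynomial indexed by the strict partition associated to $\textbf{a}$ reduces the computation to a principal specialization of a Schur $Q$-function, which evaluates to $\prod_{i=0}^r\frac{1}{(a_i+1)!}\prod_{0\leq j<i\leq r}\frac{a_i-a_j}{a_i+a_j+2}$ times $(\theta')^{|\textbf{a}|+r+1}$. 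I expect the main obstacle to be twofold: verifying the transversality and expected-dimension statement from the coupled Gieseker-Petri condition, so that the degeneracy-locus formula is actually applicable and computes the genuine fundamental class, and then carrying out the specialization of the symmetric-degeneracy class into the closed product form. Consistency with Theorem \ref{maintwistrm} gives a useful check: taking $\textbf{a}=(0,1,\dots,r)$ the extra conditions at a generic $p$ are automatic, the power of $\theta'$ becomes $\frac{(r+1)(r+2)}{2}$, and the coefficient must collapse to $\prod_{i=1}^{r+1}\frac{i!}{(2i)!}$.
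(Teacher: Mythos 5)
Your proposal follows essentially the same two-step strategy as the paper's proof of Theorem \ref{trm: pointed-twist-dim}: the upper bound on the dimension comes from the injectivity of a pointed Prym--Gieseker--Petri map, supplied exactly by the coupled Gieseker--Petri generality of Theorems \ref{trm: coupled k = 1} and \ref{trm: coupled k = 2}, while the lower bound and the class come from a degeneracy-locus presentation built on the symplectic form that the norm condition induces. The one place where your formalism diverges is the second step: you present the locus as the corank stratification of a globally defined symmetric map $\gamma\colon E\to E^{*}$, but such a map does not literally exist on all of $P$ --- the natural morphism is $\mathcal{W}\hookrightarrow\mathcal{V}\rightarrow\mathcal{V}/\mathcal{U}\cong\mathcal{U}^{\vee}$, a map between two non-isomorphic rank-$2N$ bundles, and the symmetric-matrix picture is only the local model. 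The paper therefore keeps the data as a Lagrangian subbundle $\mathcal{U}$ together with the flag of isotropic subbundles $\mathcal{W}_i$ inside the symplectic bundle $\mathcal{V}$ of Subsection \ref{sub: Lagrangian} and quotes the Type C corollary of Anderson--Fulton; you would have to pass to that formulation anyway to make the class computation rigorous, after which the two arguments coincide. Finally, do carry out the consistency check you propose at the end: specializing $\textbf{a}=(0,1,\dots,r)$ in the displayed coefficient gives, already for $r=0$, the class $\theta'$, whereas Theorem \ref{maintwistrm} gives $\tfrac{1}{2}\theta'$ for the same locus $V^0_\eta(f)$, so the normalizations of the two degeneracy-locus formulas (and possible multiplicities of the underlying cycles) need to be tracked carefully rather than assumed to collapse.
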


Our proof of this result is simple and straight-forward: By using the coupled Gieseker-Petri condition we immediately compute the dimension of the tangent space for a generic element in the locus. This gives the inequality
\[ \dim V^{\textbf{a}}_\eta(f,p) \leq g - 1 + k -r-1-\sum_{i=0}^ra_i.\] 
To obtain the converse inequality, we give a description of $V^{\textbf{a}}_\eta(f,p)$ as a Lagrangian degeneracy locus, see Subsection \ref{sub: Lagrangian} and Section \ref{sec: twist}.

\subsection{Boundary degenerations and Prym-Brill-Noether general curves} 

In many contexts within Algebraic Geometry, the geometric behaviour over the smooth locus can be understood in terms of combinatorial properties at the boundary. This observation raises the natural question: What limit linear series appear as degenerations of Prym-Brill-Noether loci? 

We consider the universal Prym-Brill-Noether locus 
\[ \mathcal{V}^r_g \coloneqq \left\{ [f\colon \widetilde{C}\rightarrow C, L] \ | \ [f] \in \cR_g \ \textrm{and} \ L \in V^r(f)\right\}. \]
along with the forgetful map 
\[ \mathcal{V}^r_g \rightarrow \cR_g. \]
Furthermore, we consider the diagram 
\[
\begin{tikzcd}
	\mathcal{V}^r_g  \arrow{r}{} \arrow[swap]{d}{} & ? \arrow{d}{} \\
	\cR_g \arrow{r}{i}&  \rr_g
\end{tikzcd}
\]
and ask for a suitable compactification of $\mathcal{V}^r_g$ that completes it. 

Understanding the compactification of this space led to several important results in Prym-Brill-Noether Theory. For instance, degenerating $C$ to a chain of elliptic curves was instrumental in proving the injectivity of the Prym-Gieseker-Petri map and in computing the dimension of the generic fiber, see \cite{Welters}. More recently, when $g - \frac{r(r+1)}{2}$ equals $0$ or $1$, this degeneration was used to compute the class of the Prym-Brill-Noether divisor, see \cite{BudPBN} as well as to show the irreducibility of the Universal Prym-Brill-Noether locus, see \cite{BudPrymIrr}. 

Viewing the space $V^r(f)$ as a subspace of $W^r_{2g-2}(\widetilde{C})$, it is natural to consider limit linear series $g^r_{2g-2}$ satisfying the norm condition and ask which ones appear when degenerating $\mathcal{V}^r_g$ to the boundary. Our focus will be on how this universal Prym-Brill-Noether locus degenerate above the boundary divisor $\Delta_1$, see Proposition \ref{Prym limits}. This degeneration provides a link between Prym-Brill-Noether loci and pointed Brill-Noether loci, which are more thoroughly understood. Using this link, we provide an alternative proof that for a generic $[f\colon \widetilde{C}\rightarrow C] \in \cR_g$ the dimension of $V^r(f)$ is $g-1- \frac{(r+1)r}{2}$. 

In \cite{Farkas-Tarasca-BNgen}, Farkas and Tarasca find curves in $\cM_{g,1}$ that are pointed Brill-Noether general. Using the link between Prym-Brill-Noether conditions and pointed Brill-Noether conditions, we can adapt their methods and find specific curves in every genus $g$ that are Prym-Brill-Noether general (i.e. all Prym-Brill-Noether loci are either empty or of expected dimension). 

We begin this discussion by recalling the setting described in \cite{Farkas-BNgeneral-curves}: Let $S$ be the blow-up of $\mathbb{P}^2$ at nine points $p_1, \ldots, p_9$ which are general in the sense of \cite{Farkas-BNgeneral-curves}, and denote $E_1, \ldots, E_9$ the exceptional divisors. We consider the linear system 
\[ L_g \coloneqq |3gl-gE_1-\cdots-gE_8-(g-1)E_9|\]
where $l$ is the proper transform of a line in $\mathbb{P}^2$. 

A Du Val curve is defined as a genus $g$ curve $C$ in the linear system $L_g$. A generic Du Val curve is Brill-Noether general, see \cite{Farkas-Tarasca-BNgen} and \cite{Farkas-BNgeneral-curves}. The same method as in these papers can be used to find curves that are Prym-Brill-Noether general:  

\begin{trm} \label{trm: DuVal} Let $C$ be a general Du Val curve in $\cM_g$. Then there exists a $2$-torsion line bundle $\eta$ on $C$ such that $[C,\eta]$ is Prym-Brill-Noether general. 
\end{trm}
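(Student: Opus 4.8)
The plan is to deduce Prym-Brill-Noether generality from a single boundary degeneration, reducing it to the pointed Brill-Noether generality of a Du Val curve of genus $g-1$, which is precisely the input supplied by \cite{Farkas-Tarasca-BNgen}. Since being Prym-Brill-Noether general is an open condition on the parameter space of pairs $[C,\eta]$ with $C$ a Du Val curve, it suffices to exhibit a single (possibly nodal) pair in the closure of this family for which every locus $V^r(f)$ has the expected dimension $g-1-\tfrac{r(r+1)}{2}$. By upper semicontinuity of fibre dimension, this forces a general smooth Du Val curve to carry a $2$-torsion bundle $\eta$ with $\dim V^r(f)\le g-1-\tfrac{r(r+1)}{2}$ for all $r$, and Kanev's lower bound from \cite{Kanev} then yields equality, hence $[C,\eta]$ is Prym-Brill-Noether general.

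The degeneration I would use is the one underlying Farkas' induction on the genus. Writing $-K_S=3l-E_1-\cdots-E_9$ for the anticanonical class of $S$, one has $L_g=L_{g-1}+(-K_S)$, and the intersection computation $L_{g-1}\cdot(-K_S)=9(g-1)-8(g-1)-(g-2)=1$ shows that a general anticanonical pencil degenerates a smooth genus $g$ Du Val curve to a nodal curve $C_{g-1}\cup_p E$, where $C_{g-1}\in L_{g-1}$ is a Du Val curve of genus $g-1$, $E\in|-K_S|$ is a smooth elliptic (anticanonical) curve, and the two meet transversally at the single point $p$; the arithmetic genus is $(g-1)+1+1-1=g$. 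As $E$ ranges over the base-point-free anticanonical pencil and $C_{g-1}$ over $L_{g-1}$, the point $p=C_{g-1}\cap E$ sweeps out $C_{g-1}$, so for a general member $p$ is a general point of $C_{g-1}$. I would then take $\eta$ to be the $2$-torsion bundle on $C_{g-1}\cup_p E$ that is trivial on $C_{g-1}$ and restricts to a nontrivial $2$-torsion bundle $\alpha$ on $E$; the associated admissible cover is connected and étale over $E$ and splits over $C_{g-1}$, so $\widetilde{C}$ is the chain $C_{g-1}^{(1)}\cup\widetilde{E}\cup C_{g-1}^{(2)}$ with $\widetilde{E}\to E$ the connected étale double cover. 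This boundary point lies in $\Delta_1\subset\rr_g$, inside the closure of the Du Val locus, and smooths to a pair $[C,\eta]$ with $C$ a general smooth Du Val curve.

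With this degeneration in place, I would invoke Proposition \ref{Prym limits} to identify the limits of the Prym-Brill-Noether locus above this point with pointed Brill-Noether loci on the marked curve $(C_{g-1},p)$: the norm condition forces the aspect of $L$ on the second copy to be $\omega_{C_{g-1}}(p)\otimes(L^{(1)})^{-1}$, while the elliptic tail $\widetilde{E}$ together with the nontrivial $\alpha$ imposes a prescribed ramification sequence at $p$. Feeding in the pointed Brill-Noether generality of $(C_{g-1},p)$ from \cite{Farkas-Tarasca-BNgen}, valid precisely because $p$ is a general point, each relevant pointed locus $W^{\mathbf a}_d(C_{g-1},p)$ is empty or of expected dimension. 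A bookkeeping of these dimensions, combined with the fixed contribution of the elliptic aspect across the two nodes $q_1,q_2\in\widetilde{E}$, then shows that the limit locus has dimension exactly $g-1-\tfrac{r(r+1)}{2}$, and is empty whenever this number is negative. Semicontinuity gives $\dim V^r(f)\le g-1-\tfrac{r(r+1)}{2}$ on the general smooth Du Val pair, and comparison with Kanev's bound closes the argument.

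\textbf{Main obstacle.} The hard part will be the precise form of the link in Proposition \ref{Prym limits}: one must pin down exactly which pointed Brill-Noether loci, and with which ramification sequences $\mathbf a$, govern the limit, and verify that the generality of $(C_{g-1},p)$ translates into the expected dimension for the \emph{whole} limit locus rather than merely for its aspect on $C_{g-1}$. In particular I expect the delicate point to be ruling out excess-dimensional contributions coming from the compatibility of aspects across the two attaching points $q_1,q_2$ on $\widetilde{E}$, and checking that the relevant limit linear series smooth without obstruction, so that the dimension computed at the boundary genuinely bounds the dimension over the smooth locus. A secondary point to confirm, against the description of $\rr_g$, is that the chosen $\eta$ deforms to an honest $2$-torsion bundle on the smoothing rather than acquiring a branch divisor.
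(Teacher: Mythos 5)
Your proposal is correct and follows essentially the same route as the paper: degenerate to $D\cup_p J'\in\Delta_1$ with $\eta$ trivial on the genus $g-1$ Du Val curve $D$ and nontrivial on the anticanonical elliptic curve $J'$, apply Proposition \ref{Prym limits} to reduce the limit locus to a pointed Brill--Noether locus on $(D,p)$, and combine \cite{Farkas-Tarasca-BNgen} with semicontinuity and the Kanev/Lagrangian lower bound. The only slip is that $|-K_S|$ on the blow-up of $\mathbb{P}^2$ at nine general points is a single cubic rather than a base-point-free pencil, so $p$ is the unique intersection point $D\cap J'$ rather than a movable general point of $D$; this is harmless, since \cite[Theorem 1]{Farkas-Tarasca-BNgen} asserts pointed Brill--Noether generality of $[D,p]$ for precisely this point.
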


The same approach can be employed to find Prym-Brill-Noether general curves on decomposable ruled surfaces and $K3$ surfaces, see Section \ref{sec: surfaces}. 

\begin{rmk}
	It is straightforward to produce examples of nine general points in $\mathbb{P}^2$ in the sense of \cite{Farkas-BNgeneral-curves} starting from a concrete elliptic curve. For instance, it is shown in \cite{Farkas-BNgeneral-curves} that the following points lying on the elliptic curve $E = Z(y^2-x^3-17)$ are general: $p_1 = (-2,3), p_2 = (-1,-4), p_3 = (2,5), p_4 = (4,9)$, $p_5 = (52,375), p_6 = (5234, 37866), p_7 = (8,-23), p_8 = (43,282)$ and $p_9 = (\frac{1}{4},-\frac{33}{8})$. Thus, we can find Prym-Brill-Noether general curves contained in very specific blow-ups of $\mathbb{P}^2$. 
\end{rmk}

\textbf{Acknowledgments:} I would like to thank Gavril Farkas, Martin M\"oller, Martin Ulirsch, Nicola Tarasca and Pedro Souza for discussions related to the topics appearing in this paper. The author acknowledges support by Deutsche Forschungsgemeinschaft (DFG, German Research Foundation) through the Collaborative Research Centre TRR 326 Geometry and Arithmetic of Uniformized Structures, project number 444845124.
\section{Introduction to Prym-Brill-Noether Theory}
Starting with a double cover $f\colon\widetilde{C}\rightarrow C$ corresponding to an element $[C, p_1+p_2+\cdots+p_{2k},\eta]$ in $\cR_{g,2k}$, the goal of Prym-Brill-Noether Theory is to study line bundles on $\widetilde{C}$, with sufficiently many sections, that take the map $f$ into account. To this setting we associate the norm map  
\[ \mathrm{Nm}_f\colon \mathrm{Pic}(\widetilde{C})\rightarrow \mathrm{Pic}(C)\]
sending a line bundle $L\in\mathrm{Pic}(\widetilde{C})$ to $\det(f_*L)\otimes \eta$.  Equivalently, it maps a line bundle $\OO_{\widetilde{C}}(D)$ to $\OO_C(f_*D)$ for any divisor $D$ on $\widetilde{C}$.

We aim to study line bundles on $\widetilde{C}$ of norm either $\omega_C$ or $\omega_C\otimes \eta$. From the perspective of Brill-Noether Theory, we are interested in those line bundles possessing sufficiently many sections. We will study the geometry of the Prym-Brill-Noether loci: 
\[ V^r(f) = \overline{\left\{L \in \textrm{Pic}^{2g-2}(\widetilde{C}) \ | \ \textrm{Nm}_f(L)= \omega_C, \  \ h^0(\widetilde{C}, L) = r+1 \ \right\}}.\]
and  
\[ V^r_\eta(f) \coloneqq \left\{L\in \mathrm{Pic}^{2g-2+k}(\widetilde{C})\ | \ \textrm{Nm}_f(L) = \omega_C\otimes \eta, \ \textrm{and} \ h^0(\widetilde{C}, L) \geq r+1 \right\}.\]

While little is known about the geometry of twisted Prym-Brill-Noether loci $V^r_\eta(f)$, significant research  has focused on understanding the geometry of $V^r(f)$ when $f\colon\widetilde{C}\rightarrow C$ corresponds to an element in $\cR_g$. This locus is contained in one of the components 
\[ P^+ = \left\{L\in \mathrm{Pic}^{2g-2}(\widetilde{C}) \ | \ \mathrm{Nm}(L) = \omega_C \ \mathrm{and} \ h^0(\widetilde{C}, L) \equiv 0\ (\mathrm{mod} \ 2) \right\} \]
or 
\[ P^- = \left\{L\in \mathrm{Pic}^{2g-2}(\widetilde{C}) \ | \ \mathrm{Nm}(L) = \omega_C \ \mathrm{and} \ h^0(\widetilde{C}, L) \equiv 1\ (\mathrm{mod} \ 2) \right\}, \]
where both are translations of a $(g-1)$-dimensional principally polarized Abelian variety. 
Furthermore, when $f\colon\widetilde{C} \rightarrow C$ is generic in $\cR_g$, the Prym-Brill-Noether locus $V^r(f)$ has dimension $g-1-\frac{r(r+1)}{2}$, see \cite{Welters} and \cite{Bertram}; is smooth away from the locus $V^{r+2}(f)$, see \cite{Welters}; and when $g - 1-\frac{r(r+1)}{2} \geq 1$, it is irreducible, see \cite{DebarreLefschetz}. 

\subsection{Prym-Brill-Noether loci as Lagrangian degenerations}
\label{sub: Lagrangian}

The goal of this subsection is to realize the Prym-Brill-Noether loci as Lagrangian degeneracy loci. In the notation above, we consider the diagram  
	\[
\begin{tikzcd}
P^\pm\times \widetilde{C}  \arrow{r}{id\times f} \arrow[swap]{d}{} & P^\pm\times C \arrow{d}{q}  \arrow{r}{\gamma} & P^\pm \\
	\widetilde{C} \arrow{r}{f}&  C
\end{tikzcd}
\]
Let $\mathcal{L} \rightarrow P^\pm\times \widetilde{C}$ be a Poincar\'e bundle normalized so that $\mathrm{Nm}(\mathcal{L}) \cong q^*(\omega_C)$ and let $\mathcal{E} \coloneqq (id\times f)_*\mathcal{L}$. For a reduced effective divisor $D$ on $C$ of sufficiently large degree $N$, we define the rank $4N$ vector bundle
\[ \mathcal{V} \coloneqq \gamma_*\text{\large(}\mathcal{E}(D)/\mathcal{E}(-D)\text{\large)},    \]
together with two subbundles of rank $2N$
\[ \mathcal{U} \coloneqq \gamma_*\text{\large(}\mathcal{E}/\mathcal{E}(-D)\text{\large)} \ \textrm{and} \ \mathcal{W} \coloneqq \gamma_*\text{\large(}\mathcal{E}(D)\text{\large)}.\]

A key insight from Mumford, see \cite{MumfordPrym}, is that the norm condition provides a non-degenerate quadratic form 
\[\mathcal{E} \rightarrow q^*\omega_C.\]
This form induces a non-degenerate quadratic form on the vector bundle $\mathcal{V}$, for which $\mathcal{U}$ and $\mathcal{W}$ are maximal isotropic subbundles, see \cite{DeConciniPragacz}. 

We immediately notice the equality of spaces 
\[H^0(\widetilde{C}, L) = H^0(C, E) = H^0(C, E(D)) \cap H^0(C,E/E(-D)),\] 
viewed as an intersection of subspaces of $H^0(C, E(D)/E(-D))$. In particular, the locus $V^r(f)$ is equal to the locus 
\[ \left\{L \in P^\pm \ | \ \dim(\mathcal{U}\cap\mathcal{W})_{L} \geq r+1\right\}. \]

This description of the Prym-Brill-Noether locus $V^r(f)$ as a Lagrangian degeneration locus implies that its dimension is at least the expected dimension $g-1-\frac{r(r+1)}{2}$. When the dimension is $g-1-\frac{r(r+1)}{2}$ as expected, this description of $V^r(f)$ allows us to compute its class in the numerical equivalence ring $N^*(P^\pm, \mathbb{C})$ or the singular cohomology  $H^*(P^\pm, \mathbb{C})$. We obtain the formula
\[ [V^r(f)] = 2^{\frac{r(r+1)}{2}} \cdot \prod_{i=1}^{r} \frac{i!}{(2i)!}\cdot \xi^{\frac{r(r+1)}{2}} \]
where $\xi$ is the class of the theta divisor of $P^\pm$, see \cite{DeConciniPragacz}. This result was extended by Tarasca to Prym-Brill-Noether loci with prescribed vanishing at a point, see \cite{tarasca-pointed-prym}. Specifically, for a vanishing sequence $\textbf{a} = (0\leq a_0< a_1 <\cdots < a_r\leq 2g-2)$ and a point $p$ in $ \widetilde{C}$, we can represent the locus 
\[
\mathrm{V}^{\textbf{a}}(f, p) := \left\{ L\in \mathrm{Pic}^{2g-2}(\widetilde{C}) \, \left\vert \, 
\begin{array}{l}
	\mathrm{Nm}(L)= \omega_C,  \\[0.2cm]
	h^0(\widetilde{C},L)\equiv r+1 \mbox{ mod } 2, \\[0.2cm]
	h^0(\widetilde{C}, L(-a_i\,p))\geq r+1-i, \,\, \forall \, 0\leq i \leq r 
\end{array}
\right.
\right\}
\] 
as a Lagrangian degeneracy locus. By defining 
\[ \mathcal{E}_i \coloneqq (id\times f)_*(\mathcal{L}-a_ip) \ \textrm{and} \ \mathcal{W}_i \coloneqq \gamma_*\text{\large(}\mathcal{E}_i(D)\text{\large)},\]
we can describe $\mathrm{V}^{\textbf{a}}(f, p)$ as the locus 
\[ \left\{L \in P^\pm \ | \ \dim(\mathcal{U}\cap\mathcal{W}_i)_{L} \geq r+1-i \ \forall\  0\leq i \leq r\right\}. \]
Using this description, Tarasca computed the class of $V^\textbf{a}(f,p)$ in $N^*(P^\pm, \mathbb{C})$ and $H^*(P^\pm, \mathbb{C})$.

This Lagrangian degeneration has not been investigated for twisted Prym-Brill-Noether loci. In Section \ref{sec: twist}, we provide the twisted counterparts to the results in \cite{DeConciniPragacz} and \cite{tarasca-pointed-prym} regarding dimension and class computation.

\section{Brill-Noether and Gieseker-Petri conditions}
\label{sec: BN-GP}
We consider the double cover $f\colon \widetilde{C}\rightarrow C$ associated to a generic element in $\mathcal{R}_{g,2k}$ for $1\leq k \leq 3$ and investigate the Brill-Noether and Gieseker-Petri properties of $\widetilde{C}$. Our goal is to extend the results of \cite{Lelli-Chiesa-lowgenus} and \cite[Section 4]{BudKodPrym}. We will study how the curve $\widetilde{C}$ together with two points on it behaves with respect to the coupled Gieseker-Petri condition. This condition can be used to study the geometry of Brill-Noether loci for twice marked curves, see \cite{PfluegerVersatility}. In our context, it can be used to study twisted Prym-Brill-Noether loci of marked Prym curves, see Section \ref{sec: twist}.

\begin{defi} Let $[C,p,q] \in \cM_{g,2}$ and let $L$ be a line bundle on $C$. We define the space of coupled tensors as 
	\[ T^L_{p,q} \coloneqq \sum_{(a,b)\in \mathbb{Z}\times \mathbb{Z}} H^0(C, L(-ap-bq))\otimes H^0(C, \omega_C\otimes L^\vee(ap+bq)). \]
In this definition, we regard each term as a subspace of $H^0(C^*,L) \otimes H^0(C^*, \omega_C\otimes L^\vee)$ where $C^* = C\setminus \left\{p,q\right\}$. 
\end{defi}

\begin{defi}
	We define the fully coupled Gieseker-Petri map as
	\[ \mu^L_{p,q} \colon T^L_{p,q} \rightarrow H^0(C,\omega_C). \]
	We say that $[C,p,q]$ satisfies the coupled Gieseker-Petri condition if the map $\mu^L_{p,q}$ is injective for every $L\in \textrm{Pic}(C)$. 
\end{defi}

The coupled Gieseker-Petri condition is important as it guarantees that all pointed Brill-Noether loci are of the expected dimension. For the source curve of a double cover, we have that: 


\begin{trm} \label{trm: coupled k = 1}
	Let $f\colon\widetilde{C}\rightarrow C$ be a double cover corresponding to a very general element of $\mathcal{R}_{g,2}$ and let $y_1,y_2$ be the points of $\widetilde{C}$ in the preimage $f^{-1}(x)$ for $x$ a very general point of $C$. Then the curve $[\widetilde{C}, y_1,y_2]$ satisfies the coupled Gieseker-Petri condition.  
\end{trm}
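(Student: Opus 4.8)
The plan is to establish the coupled Gieseker-Petri condition by specializing the double cover to the boundary of $\rr_{g,2}$ and combining a semicontinuity reduction with a limit linear series analysis, in the spirit of the inductive proofs of Gieseker's theorem and of Pflueger's coupled Petri theorem \cite{PfluegerVersatility}, here adapted to admissible covers. Throughout, $\sigma$ denotes the covering involution, so that $y_2 = \sigma(y_1)$, and $\widetilde{C}$ has genus $2g$.

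First I would reduce the theorem to exhibiting a single well-behaved cover. Observe that $\mu^L_{y_1,y_2}$ depends only on the class of $L$ in $\mathrm{Pic}(\widetilde{C})$ modulo the subgroup generated by $\OO_{\widetilde{C}}(y_1)$ and $\OO_{\widetilde{C}}(y_2)$: replacing $L$ by $L(ay_1+by_2)$ merely reindexes the sum defining $T^L_{y_1,y_2}$, leaving both the coupled tensor space and the map unchanged. Using $\OO_{\widetilde{C}}(y_1)$ to normalize the degree, it suffices to test injectivity for $L$ ranging over a single component $\mathrm{Pic}^{d_0}(\widetilde{C})$, which is proper. Upper semicontinuity of $\dim\ker\mu^L_{y_1,y_2}$ over the universal degree-$d_0$ Picard stack above a parameter space lying over $\rr_{g,2}$ that remembers $x$, and hence the conjugate pair $y_1,y_2$, then shows that the locus of covers failing the coupled condition is closed. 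It is therefore enough to produce one twice-marked admissible double cover, possibly nodal, on which the limit coupled Gieseker-Petri map is injective for all $L$; openness propagates injectivity to the very general smooth cover.

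Next I would set up an induction on $g$ by means of an elliptic-tail degeneration. Using the description of $\rr_{g,2}$ via admissible covers, equivalently the line bundle model of \cite[Section 2]{BudKodPrym} and \cite{AbramovichCV}, I would specialize $[C,\eta,B]$ to a boundary point with base $C_0 = C'\cup_t E$, where $C'$ has genus $g-1$ and carries both branch points together with the general point $x$, and $E$ is a general elliptic curve meeting $C'$ at a single node $t$. The admissible cover decomposes $\widetilde{C}$ into a generic ramified cover $\widetilde{C}'\to C'$ in $\cR_{g-1,2}$, of genus $2g-2$, glued to a connected \'etale double cover $\widetilde{E}\to E$ along the two conjugate points lying over $t$; a Riemann-Hurwitz count confirms that the resulting nodal curve has arithmetic genus $2g$. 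Since $x\in C'$, the marked points $y_1,y_2$ lie on $\widetilde{C}'$, so the inductive hypothesis applies to $[\widetilde{C}',y_1,y_2]$, while a generic choice of $E$ and of the torsion datum makes $\widetilde{E}$ Brill-Noether and Petri general. This componentwise generality is the essential leverage: it replaces the false expectation that the special curve $\widetilde{C}$ itself be a general curve of genus $2g$. The base of the induction is a low genus case, which I would verify directly.

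The heart of the argument, and the step I expect to be the main obstacle, is the limit analysis at the two nodes over $t$. A hypothetical nonzero element in the kernel of the limit coupled Gieseker-Petri map restricts to each component of $\widetilde{C}_0$; controlling the vanishing orders at the nodes of its two tensor factors, now coupled simultaneously through $y_1$ and $y_2$, one must show that it produces a nonzero kernel element of a coupled, or ordinary, Gieseker-Petri map on either $\widetilde{C}'$ or $\widetilde{E}$, contradicting the inductive hypothesis in the first case and the generality of the elliptic cover in the second. Two features make this delicate. First, the presence of two marked points forces the simultaneous bookkeeping of a pair of vanishing sequences, rather than the single sequence handled in the pointed case of \cite{tarasca-pointed-prym}. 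Second, the cover is glued along two conjugate nodes exchanged by $\sigma$, an unramified-type boundary whose compatible gluing is precisely the subtlety treated in Welters' study of the Prym-Gieseker-Petri map \cite{Welters}. Once this componentwise contradiction is in place, the limit coupled Gieseker-Petri map is injective, and the semicontinuity reduction of the first step finishes the proof.
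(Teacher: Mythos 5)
Your overall strategy (reduce by semicontinuity to one boundary cover, then degenerate) matches the paper's, but the degeneration you choose creates a problem that your proposal does not resolve, and the unresolved step is precisely the mathematical content of the theorem. You glue $\widetilde{C}'$ to the \'etale elliptic cover $\widetilde{E}$ along the \emph{two} conjugate points over $t$, so the limit source curve is not of compact type: it contains a cycle. This puts you outside the scope of both the Eisenbud--Harris limit linear series formalism and, crucially, of Pflueger's chain theorem \cite[Theorem 2.2]{PfluegerVersatility}, which is stated for chains glued at single nodes. The ``limit analysis at the two conjugate nodes'' that you defer as the main obstacle is therefore not a routine adaptation of the pointed bookkeeping in \cite{tarasca-pointed-prym}; it would require a bespoke Welters-style argument for a non-compact-type degeneration, plus an induction base case you also leave unverified. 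As written, the proposal reduces the theorem to a harder-looking open step rather than proving it.

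The paper sidesteps all of this by a different choice of boundary point: it puts the two branch points on a rational tail $R$ attached to a genus $g$ curve $X$ with $\eta_X = \OO_X$. The cover of $R$ branched at the two points is again rational and contracts under stabilization, so the source curve becomes the \emph{compact-type} chain $X_1\cup_{p_1\sim p_2}X_2$ of two copies of the general pointed curve $[X,p]$, with $y_1,y_2$ the two copies of a very general $x\in X$. Each component $[X_i,y_i,p_i]$ is coupled-Petri general by \cite[Remark 1.8]{PfluegerVersatility}, and the single-node chain is handled directly by \cite[Theorem 2.2]{PfluegerVersatility}; no new limit analysis is needed. If you want to salvage your elliptic-tail induction, you would either have to prove a two-node gluing statement for the coupled Petri map (essentially redoing Welters in the coupled setting) or switch to a degeneration whose stabilized source is a genuine chain, which is what the paper does.
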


\begin{proof} It suffices to find a unique pointed double cover for which the coupled Gieseker-Petri condition is satisfied. 
	
    We consider $[X\cup_p R, x_1+x_2, \OO_X, \eta_R]$ an element in the boundary component of $\overline{\mathcal{R}}_{g,2}$ satisfying that $R$ is a rational curve, $\eta_{|X} = \OO_X$ and $\eta_{|R}^{\otimes 2} \cong \OO_R(x_1 + x_2)$. 
	
	After stabilization, the source of the double cover is the curve $[X_1\cup_{p_1\sim p_2}X_2]$ consisting of two copies $[X_1,p_1]$ and $[X_2,p_2]$ of $[X,p]$ glued together. 
	
	For a very general choice of $[X,p]$ and a very general point $x \in X$, the curves $[X_1, y_1, p_1]$ and $[X_2, y_2, p_2]$ satisfy the coupled Gieseker-Petri condition, see \cite[Remark 1.8]{PfluegerVersatility}. Furthermore, a chain of curves satisfying the coupled Gieseker-Petri condition continues to satisfy this condition, see \cite[Theorem 2.2]{PfluegerVersatility}. This concludes the proof.
\end{proof}

\begin{rmk} \label{rmk: BN-GP k = 2} Using the same degeneration as in Theorem \ref{trm: coupled k = 1} above, we can prove that $\widetilde{C}$ is Brill-Noether and Gieseker-Petri general, see \cite[Theorem 4.1]{BudKodPrym}.
\end{rmk}

Using the boundary description of $\rr_{g,2k}$ appearing in \cite{BudKodPrym}, we obtain the following:

\begin{trm} \label{BN-GP general}
	Let $f\colon\widetilde{C} \rightarrow C$ be a double cover corresponding to a general element in $\cR_{g,4}$. Then the curve $\widetilde{C}$ is both Brill-Noether general and Gieseker-Petri general. Moreover, if $\widetilde{x}$ is one of the four ramification points, the marked curve $[\widetilde{C},\widetilde{x}] \in \mathcal{M}_{2g+1,1}$ is pointed Brill-Noether general.  
\end{trm}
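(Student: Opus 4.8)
The plan is to prove Theorem \ref{BN-GP general} by the same degeneration-to-the-boundary strategy used in Theorem \ref{trm: coupled k = 1}, now working in $\rr_{g,4}$ and keeping track of the four ramification points. The source curve of a cover in $\cR_{g,4}$ has genus $2g+1$ by Riemann-Hurwitz, so both Brill-Noether generality of $\widetilde{C}$ and pointed Brill-Noether generality of $[\widetilde{C},\widetilde{x}]$ are statements about curves of that genus. Since these are open conditions on the relevant moduli, it suffices to exhibit a \emph{single} point in the closure $\rer_{g,4}$ whose limit satisfies the desired generality; the limit linear series machinery then guarantees that the generic member satisfies it as well. First I would invoke the boundary description of $\rr_{g,4}$ from \cite[Section 2]{BudKodPrym} to choose a convenient degenerate cover.

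The concrete degeneration I would use attaches a rational tail carrying the ramification data to a Brill-Noether general base curve, mirroring the construction in Theorem \ref{trm: coupled k = 1}. Specifically, I would take a base curve of the form $X\cup_p R$ with $R$ rational and $\eta_{|R}^{\otimes 2}\cong \OO_R(x_1+x_2+x_3+x_4)$, so that the four branch points all lie on the rational component while $\eta_{|X}=\OO_X$ is trivial on the general part. After stabilizing the admissible double cover, the source $\widetilde{C}$ degenerates to a nodal curve built from two copies $[X_1,p_1]$ and $[X_2,p_2]$ of a general $[X,p]$ glued along a chain that records the branching on $R$. The key point is that each component is a very general pointed curve, hence Brill-Noether general, and the gluing nodes can be arranged so that the combined curve is of compact type or a chain for which limit linear series are well-behaved.

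The main technical step is then to propagate generality across the chain. For Brill-Noether generality of $\widetilde{C}$ and pointed Brill-Noether generality of $[\widetilde{C},\widetilde{x}]$, I would use that a chain of Brill-Noether (respectively pointed Brill-Noether) general curves, with general gluing points and general marked points on the end component, remains Brill-Noether general in the limit; this is exactly the chain-stability of the Gieseker-Petri and Pflüger-type conditions cited in Theorem \ref{trm: coupled k = 1} via \cite[Theorem 2.2]{PfluegerVersatility}, together with the analogous statement for pointed Brill-Noether loci in \cite{Farkas-Tarasca-BNgen}. The ramification point $\widetilde{x}$, being one of the four points lying over the rational component $R$, sits on a terminal component of the chain, which is precisely the favorable position for establishing pointed Brill-Noether generality since the vanishing conditions at $\widetilde{x}$ only interact with limit series on that last component.

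The hard part will be verifying that the particular nodal source curve obtained after stabilization genuinely falls within the scope of the chain-smoothing theorems for \emph{pointed} Brill-Noether loci, rather than merely the unpointed ones; the admissible cover structure over the four branch points on $R$ produces a specific gluing pattern whose dual graph one must check is compatible with the hypotheses of \cite{Farkas-Tarasca-BNgen} and \cite[Theorem 2.2]{PfluegerVersatility}. Once the dual graph and the position of $\widetilde{x}$ on the terminal component are identified explicitly, the generality follows by openness, as in Remark \ref{rmk: BN-GP k = 2}. I expect the unpointed Brill-Noether and Gieseker-Petri generality of $\widetilde{C}$ to be an essentially immediate extension of the $\cR_{g,2}$ argument, with the pointed statement requiring the extra bookkeeping just described.
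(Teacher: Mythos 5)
There is a genuine gap, and it is exactly at the point you flag as ``the hard part.'' Your degeneration puts all four branch points on a single rational bridge $R$ with $\eta_{|R}^{\otimes 2}\cong \OO_R(x_1+x_2+x_3+x_4)$; the induced cover of $R$ is then an \emph{elliptic} curve $\widetilde{E}$, and the stabilized source is the chain $X_1\cup_{p_1}\widetilde{E}\cup_{p_2}X_2$. Consequently all four ramification points $\widetilde{x}_1,\dots,\widetilde{x}_4$ lie on $\widetilde{E}$, which is the \emph{interior} component of the chain, glued to $X_1$ and $X_2$ on both sides --- contrary to your claim that $\widetilde{x}$ sits on a terminal component. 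Worse, the marked points on $\widetilde{E}$ are forced into special position by the admissible-cover structure: $\OO_{\widetilde{E}}(2\widetilde{x}_i)\cong f^*\OO_R(x_i)$ and $\OO_{\widetilde{E}}(p_1+p_2)\cong f^*\OO_R(p)$ give $2\widetilde{x}_i\sim p_1+p_2$ and $\widetilde{x}_i-\widetilde{x}_j\in \widetilde{E}[2]$. So $[\widetilde{E},p_1,p_2,\widetilde{x}]$ is \emph{not} a general $3$-pointed elliptic curve, and the Eisenbud--Harris additivity of adjusted Brill--Noether numbers cannot be invoked off the shelf for the pointed statement; one would have to verify the three-pointed inequality on this special configuration by hand, which you do not do.

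The paper avoids this entirely by a different degeneration: it splits the branch divisor, placing $x_1$ on a rational component $R_1$ (with $\eta_{R_1}^{\otimes 2}\cong\OO_{R_1}(x_1+q)$) and $x_2,x_3,x_4$ on a second rational component $R_2$ attached at $q$. Then the cover of $R_1$ is a \emph{rational} spine $\widetilde{R}_1$ carrying the marked ramification point $\widetilde{x}_1$ together with $p_1,p_2,\widetilde{q}$, and the elliptic cover $\widetilde{E}$ of $R_2$ is attached as a one-nodal tail at $\widetilde{q}$. Now every component --- $[X_1,p_1]$, $[X_2,p_2]$, $[\widetilde{E},\widetilde{q}]$ and $[\widetilde{R}_1,\widetilde{q},\widetilde{x}_1,p_1,p_2]$ --- is pointed Brill--Noether general, and \cite[Theorem 1.1]{EisenbudHarrisg>23} applies directly to give the pointed statement. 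Note also that for Gieseker--Petri generality the paper performs a further degeneration of $[X,p]$ to a chain of rational components with elliptic tails and invokes \cite[Theorem $A'$]{EisenHarrisGieseker}; your proposal cites only the coupled Gieseker--Petri chain results of \cite{PfluegerVersatility}, which address a different (two-pointed, coupled) condition and do not by themselves yield Gieseker--Petri generality of the glued curve. Your outline of the general strategy (openness plus a single good boundary point) is correct, but the specific boundary point you choose does not obviously satisfy the hypotheses needed for the pointed conclusion.
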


\begin{proof}
	Since the conditions of being pointed Brill-Noether general or Gieseker-Petri general are open, it suffices to exhibit a unique double cover satisfying these conditions. 
	
	We consider a generic element $[X,p] \in \mathcal{M}_{g,1}$, and take a pointed curve 
	\[ [X\cup_p R_1\cup_q R_2, x_1,x_2, x_3, x_4] \] 
	where $R_1, R_2$ are rational components satisfying $x_1 \in R_1$ and $[R_2,q,x_2,x_3,x_4] \in \mathcal{M}_{0,4}$ is generic.
	Let $\eta$ be the line bundle on this curve whose restrictions to the components satisfy: 
	\[ \eta_X = \OO_X, \ \ \eta_{R_1}^{\otimes2} \cong \OO_{R_1}(x_1 + q), \ \textrm{and} \ \eta_{R_2}^{\otimes2} \cong \OO_{R_2}(q+x_2+x_3+x_4).  \] 
	
	Out of this data, we obtain 
	\begin{enumerate}
		\item A double cover $\widetilde{R}_1 \rightarrow R_1$ associated to $[R_1, x_1, q, \eta_{R_1}]$, and we denote by $\widetilde{q}$ the preimage of $q$ and by $p_1, p_2$ the two preimages of $p$; 
		\item A double cover $\widetilde{E} \rightarrow R_2$ associated to $[R_2, q, x_2, x_3, x_4, \eta_{R_2}]$ and we denote with $\widetilde{q}$ the preimage of $q$ in $\widetilde{E}$. 
	\end{enumerate}
	
	We can now describe the double cover associated to $ [X\cup_p R_1\cup_q R_2, x_1,x_2, x_3, x_4, \eta]$. Let $ [X_1, p_1]$ and $[X_2, p_2]$ be two copies of the curve $[X,p]$. The source of the double cover is obtained by glueing to the rational curve $[\widetilde{R_1}, p_1, p_2, \widetilde{q}]$ the curves $[X_1,p_1]$, $[X_2, p_2]$ and $[\widetilde{E},\widetilde{q}]$ so that the markings denoted by the same symbol are identified. 
	
	Because the curves $[X_1, p_1]$, $[X_2, p_2]$, $[\widetilde{E},\widetilde{q}]$ and $[\widetilde{R_1}, \widetilde{q}, \widetilde{x}_1,p_1, p_2]$ are all pointed Brill-Noether general, it follows that the curve obtained by glueing them together is pointed Brill-Noether general, see \cite[Theorem 1.1]{EisenbudHarrisg>23}. 
	
	Moreover, by degenerating the curve $[X,p]$ to a chain of rational components, each glued to a unique elliptic component, we obtain from \cite[Theorem $A'$]{EisenHarrisGieseker} that this curve is Gieseker-Petri general. 
\end{proof}

Next, let $f\colon \widetilde{C}\rightarrow C$ be a generic element in $\cR_{g,4}$. We aim to determine whether the curve $\widetilde{C}$ together with two very general points in the same fiber of $f$ satisfies the coupled Gieseker-Petri condition.

\begin{trm} \label{trm: coupled k = 2}
	Let $f\colon\widetilde{C}\rightarrow C$ be a double cover corresponding to a very general element of $\mathcal{R}_{g,4}$ and let $y_1,y_2$ be the points of $\widetilde{C}$ in the preimage $f^{-1}(x)$ for $x$ a very general point of $C$. Then the pointed curve $[\widetilde{C}, y_1,y_2]$ satisfies the coupled Gieseker-Petri condition.  
\end{trm}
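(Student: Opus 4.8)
The plan is to follow the strategy of Theorem \ref{trm: coupled k = 1}, now degenerating to the boundary point of $\overline{\mathcal{R}}_{g,4}$ already used in Theorem \ref{BN-GP general}. Since the coupled Gieseker-Petri condition is open in the moduli of twice-pointed curves, it suffices to produce a single pointed double cover on which it holds and then let openness transport it to the very general element. I would take the boundary element $[X\cup_p R_1\cup_q R_2, x_1, x_2, x_3, x_4, \eta]$ of Theorem \ref{BN-GP general}, whose source is the tree obtained by gluing $[X_1, p_1]$, $[X_2, p_2]$ and the elliptic double cover $[\widetilde{E}, \widetilde{q}]$ to the rational bridge $[\widetilde{R}_1, p_1, p_2, \widetilde{q}]$. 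To impose the coupled condition I mark the two points of the fibre over a very general $x\in X$: since $\eta_{|X}\cong \OO_X$, the cover is trivial over $X$, so $x$ has one preimage $y_1\in X_1$ and one preimage $y_2\in X_2$, and these are the specialisations of the two points $y_1, y_2$ in a common fibre of $f$ appearing in the statement.

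The next step is to realise this source as a chain of twice-pointed curves, so that Pflueger's propagation result \cite[Theorem 2.2]{PfluegerVersatility} applies. I would absorb the rational bridge and the elliptic tail into the single genus-one link $M\coloneqq \widetilde{R}_1\cup_{\widetilde{q}}\widetilde{E}$, marked at the two attaching points $p_1, p_2\in \widetilde{R}_1$, so that the source becomes the chain
\[ [X_1, y_1, p_1]\ -\ [M, p_1, p_2]\ -\ [X_2, y_2, p_2], \]
with the coupling points $y_1, y_2$ on the two end links. For the end links, a very general choice of $[X,p]$ and of $x$ forces $[X_1, y_1, p_1]\cong [X_2, y_2, p_2]\cong [X, x, p]$ to satisfy the coupled Gieseker-Petri condition, exactly as in Theorem \ref{trm: coupled k = 1}, by \cite[Remark 1.8]{PfluegerVersatility}. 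Granting that each link is coupled Gieseker-Petri, \cite[Theorem 2.2]{PfluegerVersatility} propagates the condition to the whole chain, and openness then carries it to the very general fibre $[\widetilde{C}, y_1, y_2]$.

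The main obstacle is therefore to verify the coupled Gieseker-Petri condition for the middle link $[M, p_1, p_2]$, where $M$ is the nodal genus-one curve $\widetilde{R}_1\cup_{\widetilde{q}}\widetilde{E}$ marked at two general points of the rational component. This is where the genuinely new input sits: the two end links are handled verbatim as in the $k=1$ case, whereas the extra unit of genus carried by $\widetilde{E}$ must be shown not to obstruct injectivity of $\mu^L_{p_1, p_2}$ for any $L$ on $M$. A pleasant feature of this degeneration is that $p_1, p_2$ lie on the rational bridge $\widetilde{R}_1$ and are hence genuinely general, so no spurious linear relation (such as $p_1, p_2$ forming a fibre of a $g^1_2$) is imposed on them. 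I expect to dispatch the condition by the standard elliptic-tail principle: since $\widetilde{R}_1$ is rational, the coupled tensors on $M$ are controlled by the genus-one contribution of $\widetilde{E}$, and an elliptic curve attached at a general point imposes precisely the expected vanishing, so that $[M, p_1, p_2]$ is coupled Gieseker-Petri. Concretely I would either invoke the compatibility of Pflueger's condition with attaching a Brill-Noether general tail at a general point, or further degenerate $\widetilde{E}$ and reduce to the statement that a general twice-pointed genus-one curve is coupled Gieseker-Petri.
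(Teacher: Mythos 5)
Your overall strategy (degenerate to the boundary, check the coupled Gieseker--Petri condition link by link, propagate along the chain via \cite[Theorem 2.2]{PfluegerVersatility}, and finish by openness) is the right one, and your treatment of the two end links $[X_1,y_1,p_1]$ and $[X_2,y_2,p_2]$ matches the paper. But you have chosen the wrong degeneration, and the gap you yourself flag as the ``main obstacle'' is a genuine one. By importing the boundary element of Theorem \ref{BN-GP general} (with two rational components $R_1, R_2$), you are forced to take as middle link the \emph{nodal} genus-one curve $M = \widetilde{R}_1\cup_{\widetilde{q}}\widetilde{E}$ with both markings $p_1,p_2$ on the rational component and $\widetilde{E}$ hanging off as a dead-end tail. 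None of the results you cite covers this: \cite[Example 1.6]{PfluegerVersatility} concerns a \emph{smooth} elliptic curve marked at two points whose difference is non-torsion, and \cite[Theorem 2.2]{PfluegerVersatility} concatenates twice-marked links end to end, so it cannot absorb a tail attached away from the chain's spine. Your two proposed fixes do not close this: ``compatibility with attaching a Brill--Noether general tail'' is not a statement about the coupled Gieseker--Petri condition that appears in the cited reference, and ``further degenerate $\widetilde{E}$ and reduce to a general twice-pointed genus-one curve'' runs the degeneration in the wrong direction --- $[M,p_1,p_2]$ is a specific nodal curve, not a general genus-one curve, so you would need the condition \emph{at} this nodal curve, which is exactly what is missing.

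The paper sidesteps the issue by choosing a simpler degeneration: all four branch points $x_1,\dots,x_4$ are placed on a \emph{single} rational component $R$ attached to $X$ at $p$, with $\eta_R^{\otimes 2}\cong\OO_R(x_1+x_2+x_3+x_4)$. The induced cover $\widetilde{E}\to R$ is then a smooth elliptic curve containing both preimages $p_1,p_2$ of $p$, the source is the honest chain $X_1\cup_{p_1}\widetilde{E}\cup_{p_2}X_2$, and the middle link $[\widetilde{E},p_1,p_2]$ is exactly the situation of \cite[Example 1.6]{PfluegerVersatility} (one checks $p_1-p_2$ is non-torsion). With that modification your argument goes through verbatim; as written, the verification for $[M,p_1,p_2]$ is an unproven claim on which the whole proof rests.
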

\begin{proof}
   As before, it suffices to find a unique pointed curve $[\widetilde{C},y_1, y_2]$ satisfying the coupled Gieseker-Petri condition. 
   
   Let $[X,p] \in \cM_{g,1}$ be very general and consider the pointed curve 
   \[ [X\cup_p R, x_1, x_2,x_3, x_4]\]
   where $R$ is a rational component containing the points $x_i$. Let $\eta$ be a line bundle on this curve whose restrictions to the components satisfy 
   \[ \eta_X  = \OO_X \ \textrm{and} \ \eta_R^{\otimes 2} \cong \OO_R(x_1+x_2+x_3+x_4)\] 
   
   Let $\widetilde{E} \rightarrow R$ be the double cover associated to $[R, x_1, x_2, x_3, x_4, \eta_{R}]$ and we denote by $p_1, p_2$ the points in the preimage of $p$ in $\widetilde{E}$. 
   
   The source of the double cover associated to $[X\cup_p R, x_1,x_2,x_3, x_4, \eta]$ consists of two copies $[X_1,p_1]$ and $[X_2,p_2]$ glued to the elliptic curve $[\widetilde{E},p_1,p_2]$. We consider $q$ to be a very general point of $X$ and denote $q_1, q_2$ the corresponding points in $X_1$ and $X_2$ respectively. 
   
   Since $p_1-p_2$ is not torsion, we know that $[\widetilde{E},p_1,p_2]$ satisfies the coupled Gieseker-Petri condition, see \cite[Example 1.6]{PfluegerVersatility}. Because $[X_1,p_1,q_1]$ and $[X_2,p_2,q_2]$ are very general, these curves also satisfy the coupled Gieseker-Petri condition, see \cite[Remark 1.8]{PfluegerVersatility}. A chain of curves satisfying the coupled Gieseker-Petri condition also satisfies the coupled Gieseker-Petri condition, see \cite[Theorem 2.2]{PfluegerVersatility}, hence the conclusion. 
\end{proof}

When $f\colon\widetilde{C} \rightarrow C$ is the double cover corresponding to a generic element of $\cR_{g,6}$, we recover the result from \cite{Lelli-Chiesa-lowgenus}:
\begin{trm} \label{ram=6}
	Let $f\colon\widetilde{C} \rightarrow C$ be the double cover corresponding to a generic element of $\cR_{g,6}$. Then the curve $\widetilde{C}$ is Brill-Noether general. 
\end{trm}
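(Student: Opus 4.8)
The plan is to reuse the boundary-degeneration technique from the proof of Theorem~\ref{BN-GP general}, distributing the six branch points so that the limiting source curve is a tree of manifestly Brill--Noether general components. Since being Brill--Noether general is an open condition on $\rr_{g,6}$, it suffices to exhibit a single double cover whose source is Brill--Noether general, and I would produce it over a boundary point of $\rr_{g,6}$, described as in \cite{BudKodPrym}.

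Concretely, I would take a very general $[X,p]\in\cM_{g,1}$ and form the pointed base curve $[X\cup_p R_0\cup_{q_1}R_1\cup_{q_2}R_2,\, x_1,\dots,x_6]$, where $R_0,R_1,R_2$ are rational, $R_0$ carries the three nodes $p,q_1,q_2$, and the branch points are split as $x_1,x_2,x_3\in R_1$ and $x_4,x_5,x_6\in R_2$. I would equip it with the line bundle $\eta$ satisfying $\eta_X=\OO_X$, $\eta_{R_0}^{\otimes 2}\cong\OO_{R_0}(q_1+q_2)$, $\eta_{R_1}^{\otimes 2}\cong\OO_{R_1}(q_1+x_1+x_2+x_3)$ and $\eta_{R_2}^{\otimes 2}\cong\OO_{R_2}(q_2+x_4+x_5+x_6)$, so that the node $p$ is \'etale while the cover is ramified over $q_1$ and $q_2$. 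The associated admissible cover then splits over $X$ into two copies $[X_1,p_1]$ and $[X_2,p_2]$ of $[X,p]$; over $R_0$ it gives a rational bridge $\widetilde{R_0}\cong\mathbb{P}^1$ (a double cover branched only at $q_1,q_2$); and over $R_1,R_2$ it gives two elliptic tails $\widetilde{E_1},\widetilde{E_2}$ (double covers branched at four points each). The limiting source is the tree obtained by attaching $X_1,X_2$ to $\widetilde{R_0}$ at $p_1,p_2$ and the tails $\widetilde{E_1},\widetilde{E_2}$ at the ramification points $\widetilde{q_1},\widetilde{q_2}$; its arithmetic genus is $g+g+0+1+1=2g+2$, the genus of $\widetilde{C}$. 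This is the exact analogue of the $\cR_{g,4}$ construction in Theorem~\ref{BN-GP general}, now with two ramified nodes and two elliptic tails rather than one.

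With this model in hand I would apply the gluing principle for pointed Brill--Noether general curves, \cite[Theorem 1.1]{EisenbudHarrisg>23}, exactly as in Theorem~\ref{BN-GP general}. The leaves $[X_1,p_1]$ and $[X_2,p_2]$ are pointed Brill--Noether general because $[X,p]$ is very general; the bridge $[\widetilde{R_0},p_1,p_2,\widetilde{q_1},\widetilde{q_2}]$ is a four-pointed rational curve, hence pointed Brill--Noether general; and each tail $[\widetilde{E_i},\widetilde{q_i}]$ is a one-pointed elliptic curve, which is pointed Brill--Noether general. Additivity of the adjusted Brill--Noether numbers across the nodes then forces every limit linear series on the tree to have the expected dimension, so the general smoothing $\widetilde{C}$ is Brill--Noether general, recovering the result of \cite{Lelli-Chiesa-lowgenus}.

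The step I expect to require the most care is not the gluing itself but checking that the chosen datum is a genuine boundary point of $\rr_{g,6}$ and that the components are general in the precise sense the gluing theorem demands. Using the boundary formalism of \cite{BudKodPrym}, one must confirm that the prescribed ramification of $\eta$ at $q_1,q_2$ is admissible and yields a connected source of the correct genus (the naive multidegree of $\eta$ exceeding $k=3$ precisely by the number of ramified nodes, as already happens for $\cR_{g,4}$). One should also check that the marked points are in general position on each component, so that no piece carries an excess family of ramified linear series. All of these are open, generic conditions, and a generic choice of $[X,p]$, of the branch points, and of the attaching data avoids the finitely many obstructions.
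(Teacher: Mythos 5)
Your argument is correct, and it rests on the same two pillars as the paper's proof --- degeneration to the boundary of $\rr_{g,6}$ and the gluing principle of \cite[Theorem 1.1]{EisenbudHarrisg>23} --- but the decomposition you choose is genuinely different. The paper's proof is a two-line reduction: it glues the source $\widetilde{X}$ of a \emph{generic} $\cR_{g,4}$ cover (genus $2g+1$) to an elliptic curve $\widetilde{E}$ at a common ramification point, and the only nontrivial input is the ``moreover'' clause of Theorem \ref{BN-GP general}, namely that $[\widetilde{X},\widetilde{q}]$ is pointed Brill--Noether general at a ramification point. You instead degenerate in one step to a five-component tree, distributing the six branch points as $3+3$ on two rational tails; this is essentially the composite of the paper's two degenerations. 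What your route buys is self-containedness (you never need the pointed refinement of Theorem \ref{BN-GP general}, only the generality of $[X,p]$), at the cost of verifying a more elaborate boundary datum; your bookkeeping there is right --- the multidegree of $\eta$ is $(0,1,2,2)$, exceeding $k=3$ by the two ramified nodes exactly as in the $\cR_{g,4}$ case, and the source has arithmetic genus $2g+2$ as required. One small correction to your final paragraph: the four points $p_1,p_2,\widetilde{q_1},\widetilde{q_2}$ on $\widetilde{R_0}$ \emph{cannot} be put in general position --- since the covering involution fixes $\widetilde{q_1},\widetilde{q_2}$ and swaps $p_1,p_2$, their cross-ratio is forced to be $-1$ --- so you cannot appeal to genericity there. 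This is harmless, because Schubert intersections attached to osculating flags at \emph{any} distinct points of $\mathbb{P}^1$ are dimensionally proper, so every $n$-pointed rational curve is pointed Brill--Noether general; the paper relies on the same fact for its central component $[\widetilde{R_1},\widetilde{q},\widetilde{x}_1,p_1,p_2]$ in Theorem \ref{BN-GP general}, but you should cite it rather than genericity.
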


\begin{proof}
	We consider $\widetilde{X} \rightarrow X$ be a generic double cover of a curve of genus $g\geq 2$, ramified at $4$ points, and $\widetilde{E} \rightarrow \mathbb{P}^1$ a generic double cover of a rational curve, ramified at $4$ points. 
	
	If we glue a ramification point in $\widetilde{X}$ to a ramification point in $\widetilde{E}$, we obtain a double cover \[[\widetilde{X}\cup_{\widetilde{q}}\widetilde{E} \rightarrow X\cup_q \mathbb{P}^1]\] of a genus $g$ curve, ramified at $6$ points. 
	
	From Theorem \ref{BN-GP general}, we know that $[\widetilde{X},\widetilde{q}]$ is pointed Brill-Noether general. Because $[\widetilde{E}, \widetilde{q}]$ is also pointed Brill-Noether general, the conclusion follows from \cite[Theorem 1.1]{EisenbudHarrisg>23}.
\end{proof}
\section{Prym-Brill-Noether Theory for ramified double covers}
\label{sec: PBN ramified}
The goal of this section is to study the geometry of the Prym-Brill-Noether loci $V^r(f)$ when $f\colon \widetilde{C} \rightarrow C$ corresponds to a generic element in $\cR_{g,2}$. We prove that $V^r(f)$ has the expected dimension and moreover, it is smooth away from the locus $V^{r+1}(f) \cap V^r(f)$. To prove this result, we consider the clutching map 
\[ i\colon \cR_{g,2} \rightarrow \Delta^{\textrm{ram}}_0\subseteq \rr_{g+1}, \ [f\colon \widetilde{C} \rightarrow C] \mapsto [f'\colon \widetilde{C}_{/\widetilde{x}\sim\widetilde{y}} \rightarrow C_{/x\sim y} ] \]
obtained by glueing together the two branch points $x, y \in C$ and the two ramification points $\widetilde{x}, \widetilde{y} \in \widetilde{C}$. Using this map, we can relate the geometry of $V^r(f)$ to the geometry of Prym-Brill-Noether loci in $\rr_{g+1}$. 

We remark that, via Serre duality, we can identify the locus $ V^r(f)$ with the locus
\[ V^{r+1}(f, x+y) \coloneqq \overline{\left\{ L \in \textrm{Pic}^{2g}(\widetilde{C}) \ | \ \textrm{Nm}_f(L) = \omega_C(x+y),\ h^0(\widetilde{C}, L) = r+2 \right\}}, \]
where $x, y \in C$ are the branched points of $f$. This locus appears as the degeneration of a Prym-Brill-Noether locus via the map $i\colon \cR_{g,2} \rightarrow \Delta_0^{\textrm{ram}} \subseteq  \rr_{g+1}$. As such, many geometric properties of $V^{r+1}(f, x+y)$ can be derived from the geometry of Prym-Brill-Noether loci on $\rr_{g+1}$. We will use the methods of \cite{Welters} to study $ V^{r+1}(f, x+y)$.

Let $[f\colon\widetilde{C}\rightarrow C]$ be generic in $\mathcal{R}_{g,2}$ and let $x,y\in C$ and $\widetilde{x}, \widetilde{y}\in \widetilde{C}$ be the branch points and ramification points of $f$, respectively. The map $f$ determines an involution $\iota\colon\widetilde{C}\rightarrow \widetilde{C}$.  

We consider the locus 
\[\left\{ L \in \textrm{Pic}^{2g}(\widetilde{C}) \ | \ \textrm{Nm}_f(L) = \omega_C(x+y)\right\}\]
which is the translation of a $g$-dimensional Abelian variety. A line bundle $L$ in this locus satisfies 
\[L \otimes \iota^* L \cong \omega_{\widetilde{C}}(\widetilde{x} + \widetilde{y})\] 

We can define a ramified Prym-Gieseker-Petri map as in \cite[Remark (1.12)]{Welters}. For this, we consider the composition: 
\[ H^0(\widetilde{C}, L)\otimes H^0(\widetilde{C}, L) \xrightarrow{id\otimes\iota^*} H^0(\widetilde{C}, L)\otimes H^0(\widetilde{C}, \iota^*L) = H^0(\widetilde{C}, L)\otimes H^0\text{\large(}\widetilde{C}, \omega_{\widetilde{C}}\otimes L^\vee(\widetilde{x}+\widetilde{y})\text{\large)} \rightarrow H^0\text{\large(}\widetilde{C}, \omega_{\widetilde{C}}(\widetilde{x}+\widetilde{y})\text{\large)}  \] 
Looking at the invariant and anti-invariant sections of  $ H^0\text{\large(}\widetilde{C}, \omega_{\widetilde{C}}(\widetilde{x}+\widetilde{y})\text{\large)}$, we can write it as 
\[   H^0\text{\large(}\widetilde{C}, \omega_{\widetilde{C}}(\widetilde{x}+\widetilde{y})\text{\large)} = H^0\text{\large(}C,\omega_C(x+y)\text{\large)}\oplus H^0(C,\omega_C\otimes \eta) \]

At the level of anti-invariant sections, the map 
\[H^0(\widetilde{C}, L)\otimes H^0(\widetilde{C}, L) \rightarrow   H^0\text{\large(}\widetilde{C}, \omega_{\widetilde{C}}(\widetilde{x}+\widetilde{y})\text{\large)} \]
restricts to the ramified Prym-Gieseker-Petri map: 
\[\wedge^2H^0(\widetilde{C}, L) \rightarrow H^0(C,\omega_C\otimes \eta)\]
Similarly to \cite{Welters}, we can use the ramified Prym-Gieseker-Petri map to understand the tangent space of $V^r(f)$ at a point $[L]$. We will show that, for a generic $f\colon\widetilde{C} \rightarrow C$ in $\cR_{g,2}$, the ramified Prym-Gieseker-Petri map is always injective:  
\begin{prop} \label{prop: prym-petri 2-ram}
	Let  $f\colon \widetilde{C}\rightarrow C$ be a generic element of $\mathcal{R}_{g,2}$ and denote by $x, y$ its branch points and by $\eta\in \textrm{Pic}^1(C)$ its associated torsion line bundle. Then, for any element $L \in \textrm{Pic}^{2g}(\widetilde{C})$ satisfying $\textrm{Nm}_f(L) = \omega_C(x+y)$, the ramified Prym-Gieseker-Petri map  
	\[\wedge^2H^0(\widetilde{C}, L) \rightarrow H^0(C,\omega_C\otimes \eta)\]
	is injective. 
\end{prop}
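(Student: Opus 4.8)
The plan is to degenerate the double cover $f\colon \widetilde{C} \rightarrow C$ to the boundary of $\rer_{g,2}$ and reduce the injectivity of the ramified Prym-Gieseker-Petri map to a Gieseker-Petri type statement on a nodal curve, following the strategy of Welters but adapted to the ramified setting. Since injectivity of the map is an open condition on $\cR_{g,2}$, it suffices to exhibit a single (possibly reducible, nodal) pointed double cover for which it holds. The natural candidate is the same degeneration used in Theorem \ref{trm: coupled k = 1}: we let $C$ degenerate so that the source $\widetilde{C}$ becomes $X_1 \cup_{p_1\sim p_2} X_2$, two copies of a very general pointed curve $[X,p]$ glued at a node, with the ramification points $\widetilde{x}, \widetilde{y}$ distributed on the two components. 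The involution $\iota$ swaps the two copies $X_1$ and $X_2$.

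First I would analyze how the anti-invariant target $H^0(C, \omega_C \otimes \eta)$ and the map $\wedge^2 H^0(\widetilde{C}, L) \to H^0(C,\omega_C\otimes\eta)$ behave under this degeneration. On the nodal curve, a section of $L$ decomposes according to its restrictions to $X_1$ and $X_2$ (subject to the gluing condition at $p_1 \sim p_2$), and the wedge product $s\wedge t$ of two such sections has its anti-invariant image controlled by the products $s_{|X_i}\cdot (\iota^*t)_{|X_i}$. Because $\iota$ exchanges the two components, the anti-invariant part of the multiplication map is governed precisely by a coupled multiplication of sections on a single copy $[X,p]$ with a prescribed vanishing/pole behaviour at the node $p$. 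This is where the coupled Gieseker-Petri condition from Theorem \ref{trm: coupled k = 1} enters: the injectivity of $\mu^L_{p,q}$ for the twice-marked curve $[\widetilde{C}, y_1, y_2]$ translates into injectivity of the ramified Prym-Gieseker-Petri map on the degenerate cover.

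The key steps, in order, would be: (i) write down the limit of $L$ and of the two Hodge-theoretic summands $H^0(C,\omega_C(x+y)) \oplus H^0(C,\omega_C\otimes\eta)$ across the clutching map $i\colon \cR_{g,2}\to \Delta_0^{\mathrm{ram}}\subseteq \rer_{g+1}$, verifying that the anti-invariant summand degenerates compatibly; (ii) express the limiting ramified Prym-Gieseker-Petri map as a direct sum (or extension) of coupled Gieseker-Petri maps on the component $[X,p]$ with markings at $p$; (iii) invoke the coupled Gieseker-Petri generality of $[X,p]$ established in Theorem \ref{trm: coupled k = 1} to conclude injectivity of each piece, and hence of the whole map by a semicontinuity/specialization argument. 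I expect the main obstacle to be step (ii): carefully tracking the antisymmetrization $\wedge^2 H^0(\widetilde{C},L)$ under the involution through the node, and showing that an element of the kernel forces a nontrivial coupled tensor in the kernel of $\mu^L_{p,q}$. In particular one must handle the sections that are supported on, or vanish at, the node $p_1 \sim p_2$ and the ramification points, ensuring the bookkeeping of twists by $\OO(a p + b q)$ matches the definition of $T^L_{p,q}$ exactly. Once injectivity holds on the special fiber, openness of injectivity and the identification $V^r(f) = V^{r+1}(f,x+y)$ give the statement for a generic element of $\cR_{g,2}$.
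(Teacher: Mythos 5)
Your strategy diverges from the paper's at the choice of degeneration, and as written it has concrete gaps. The paper proves Proposition \ref{prop: prym-petri 2-ram} by Welters' method: it degenerates $C$ to a chain $E_1\cup_{p_1}E_2\cup\cdots E_g\cup_{p_g}R$ of $g$ elliptic curves capped by a rational curve $R$ carrying both branch points, so that $\widetilde{C}$ becomes two copies of the elliptic chain joined through $\widetilde{R}$, and then runs the explicit vanishing-order analysis of \cite{Welters} (viewing the configuration as Welters' elliptic bridge degenerated to $\widetilde{R}/\widetilde{x}\sim\widetilde{y}$). You instead stop at the one-nodal degeneration of Theorem \ref{trm: coupled k = 1} and try to feed the coupled Gieseker--Petri condition into the anti-invariant multiplication map. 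That is a genuinely different route, but it does not go through as described.

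First, the ramification points cannot be ``distributed on the two components'' $X_1$ and $X_2$: they are fixed by the involution $\iota$, which swaps $X_1$ and $X_2$, so they must lie on the $\iota$-invariant rational component $\widetilde{R}$, which is contracted to the node under stabilization. This changes the local analysis at $p_1\sim p_2$, since the twist $\omega_{\widetilde{C}}(\widetilde{x}+\widetilde{y})$ then concentrates at the node rather than spreading over the two copies of $X$. Second, and more seriously, Theorem \ref{trm: coupled k = 1} gives the coupled Gieseker--Petri condition at the two points $y_1,y_2$ of a \emph{general} fibre of $f$, whereas the ramified Prym--Gieseker--Petri map is built from the isomorphism $L\otimes\iota^*L\cong\omega_{\widetilde{C}}(\widetilde{x}+\widetilde{y})$ and is therefore twisted at the ramification points. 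Injectivity of $\mu^M_{y_1,y_2}$ for all $M$ says nothing about multiplication maps whose unbalanced twists sit at $\widetilde{x},\widetilde{y}$; you would need a coupled Petri statement at the ramification points, which the paper does not establish and which your degeneration cannot easily produce precisely because $\widetilde{x},\widetilde{y}$ land on the destabilizing component. Finally, the ``semicontinuity/specialization'' step you defer is not routine: on a reducible special fibre the limit of $L$ is defined only up to twists at the nodes and $h^0$ can jump, so one must verify injectivity for every limit aspect --- this bookkeeping is exactly what Welters' elliptic-chain method is engineered to handle, and it is the substance of the paper's (admittedly terse) proof.
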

\begin{proof} Similarly to \cite[Section 2]{Welters}, it is sufficient to exhibit a unique double cover for which the ramified Prym-Gieseker-Petri map is injective:
	
We consider $C \coloneqq E_1\cup_{p_1} E_2 \cup \cdots E_g \cup_{p_g} R$ a chain of $g$ elliptic curves and a rational curve $R$. Furthermore we assume that $p_i-p_{i-1}$ is not torsion on $E_i$ for any $2 \leq i \leq g$ and assume the points $x, y$ are on $R$. For the torsion line bundle $\eta$ defining a branched double cover on the curve, we take it to be trivial on the $E_i$'s and to satisfy $\eta_{|R}^{\otimes2} \cong \OO_R(x+y)$.
	
	The associated double cover will be of the form 
	\[\widetilde{C} = E^1_1\cup_{p^1_1} E^1_2 \cup \cdots E^1_g \cup_{p^1_g} \widetilde{R} \cup_{p^2_g} E^2_g\cup_{p^2_{g-1}} E^2_{g-1} \cup \cdots \cup E^2_1 \rightarrow C \coloneqq E_1\cup_{p_1} E_2 \cup \cdots E_g \cup_{p_g} R \]
	where $E^1_1\cup_{p^1_1} E^1_2 \cup \cdots E^1_g$ and $E^2_g\cup_{p^2_{g-1}} E^2_{g-1} \cup \cdots \cup E^2_1$ are two copies of $E_1\cup_{p_1} E_2 \cup \cdots E_g$ and $\widetilde{R} \rightarrow R$ is the map determined by $\eta_{|R}$, with $p^1_g$ and $p^2_g$ the points in the preimage of $p_g$. 
	
	The same method as in \cite{Welters} can be applied to this double cover to obtain the conclusion. One can view this result as the one in \cite{Welters} with the elliptic bridge degenerating to $\widetilde{R}/\widetilde{x}\sim \widetilde{y}$.  
\end{proof}

Using the bijective morphism 
\[ V^r(f) \rightarrow V^{r+1}(f, x+y), \ \  L \mapsto \iota^*L\]
we obtain the desired result:
\begin{trm} \label{trm: dim-ramified}
	Let $r\geq 0$ and let $[f\colon \widetilde{C} \rightarrow C]$ be a generic element of $\mathcal{R}_{g,2}$. Then the Prym-Brill-Noether locus $V^r(f)$ has dimension $g-\frac{(r+1)(r+2)}{2}$ and is smooth away from the locus of line bundles having strictly more than $r+1$ independent global sections.   
\end{trm}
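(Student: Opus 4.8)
The plan is to leverage Proposition \ref{prop: prym-petri 2-ram} together with the bijective morphism $V^r(f)\to V^{r+1}(f,x+y)$, $L\mapsto \iota^*L$, reducing everything to a tangent-space computation on the locus of line bundles $L\in \mathrm{Pic}^{2g}(\widetilde{C})$ with $\mathrm{Nm}_f(L)=\omega_C(x+y)$. Following the strategy of \cite{Welters}, the first step is to identify the Zariski tangent space to $V^{r+1}(f,x+y)$ at a point $[L]$ with $h^0(\widetilde{C},L)=r+2$. Exactly as in the unramified Prym setting, this tangent space is the annihilator of the image of the ramified Prym-Gieseker-Petri map $\wedge^2 H^0(\widetilde{C},L)\to H^0(C,\omega_C\otimes\eta)$, sitting inside the tangent space $H^0(C,\omega_C\otimes\eta)^\vee$ of the $g$-dimensional abelian variety $\{L \mid \mathrm{Nm}_f(L)=\omega_C(x+y)\}$. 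I would derive this identification by the standard deformation-theoretic description of tangent spaces to degeneracy loci (cusp map / Petri-type pairing), taking care that the translated norm-fiber here is $g$-dimensional rather than $(g-1)$-dimensional as in the classical case, which is where the shift in expected dimension enters.

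The second step is the numerical bookkeeping. Since $\dim H^0(C,\omega_C\otimes\eta)=g-1+k=g$ for $k=1$ (by Riemann-Roch, as $\deg(\omega_C\otimes\eta)=2g-2+1$), and by Proposition \ref{prop: prym-petri 2-ram} the map $\wedge^2 H^0(\widetilde{C},L)\to H^0(C,\omega_C\otimes\eta)$ is injective, its image has dimension $\binom{r+2}{2}=\frac{(r+1)(r+2)}{2}$. The tangent space, being the annihilator of this image inside the $g$-dimensional ambient tangent space, therefore has dimension exactly
\[ g-\frac{(r+1)(r+2)}{2}. \]
Combined with Kanev's lower bound on the dimension of $V^r(f)$ from \cite{Kanev}, namely $g-1+k-k(r+1)-\frac{r(r+1)}{2}=g-\frac{(r+1)(r+2)}{2}$ for $k=1$, this forces $V^r(f)$ to have exactly the expected dimension and to be smooth at every point where the tangent-space computation applies.

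The smoothness statement then follows formally: at a point $[L]$ with $h^0(\widetilde{C},L)=r+1$ exactly (equivalently $h^0=r+2$ after the Serre-duality/$\iota^*$ identification, landing on the open stratum of $V^{r+1}(f,x+y)$), the injectivity of the ramified Prym-Gieseker-Petri map guarantees the tangent space has the minimal possible dimension equal to $\dim V^r(f)$, so $V^r(f)$ is smooth there. The locus where this argument fails is precisely where the jump in $h^0$ occurs, i.e. the line bundles with strictly more than $r+1$ independent global sections, which is exactly the excluded locus in the statement. I would conclude by transporting smoothness back across the bijection $L\mapsto\iota^*L$, noting this map is an isomorphism of schemes so it preserves both dimension and the smooth locus.

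The main obstacle I anticipate is the first step: establishing cleanly that the Zariski tangent space to $V^{r+1}(f,x+y)$ is the annihilator of the image of the \emph{ramified} Prym-Gieseker-Petri map. One must verify that restricting to anti-invariant sections correctly produces the target $H^0(C,\omega_C\otimes\eta)$ and that the norm condition $L\otimes\iota^*L\cong\omega_{\widetilde{C}}(\widetilde{x}+\widetilde{y})$ is compatible with the deformation theory, so that the relevant pairing is genuinely an alternating form landing in the anti-invariant part rather than in all of $H^0(\widetilde{C},\omega_{\widetilde{C}}(\widetilde{x}+\widetilde{y}))$. This is where the ramification points $\widetilde{x},\widetilde{y}$ and the twist by $\OO(\widetilde{x}+\widetilde{y})$ must be handled carefully; once this tangent-space description is in place, the dimension count and smoothness are immediate from Proposition \ref{prop: prym-petri 2-ram} and Kanev's bound.
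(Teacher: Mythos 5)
Your tangent-space argument is essentially the first third of the paper's proof: the paper also identifies $T_L(V^r(f))$ with $\mathrm{Im}(v)^\perp$ for a Prym--Petri-type map, uses Proposition \ref{prop: prym-petri 2-ram} to show the kernel is exactly $S^2H^0(\widetilde{C},L)$, and concludes that the tangent space has dimension $g-\frac{(r+1)(r+2)}{2}$ at every $[L]$ with exactly $r+1$ sections. Working on $V^{r+1}(f,x+y)$ via the $\iota^*$-identification rather than directly on $V^r(f)$ is only a cosmetic difference. So the upper bound on the dimension and the smoothness statement are fine.

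The genuine gap is non-emptiness. You write that Kanev's lower bound ``forces $V^r(f)$ to have exactly the expected dimension,'' but Kanev's estimate is the standard degeneracy-locus bound: \emph{every non-empty component} has at least the expected dimension. It produces no points, and in particular it does not rule out $V^r(f)=\emptyset$ when $g-\frac{(r+1)(r+2)}{2}\geq 0$, in which case the assertion ``$\dim V^r(f)=g-\frac{(r+1)(r+2)}{2}$'' would be false. The paper spends the second part of its proof precisely on this: it degenerates via $\cR_{g,2}\to\Delta_0^{\mathrm{ram}}\subseteq\rr_{g+1}$, identifies the Prym limit $g^{r+1}_{2g}$ on the glued cover with the locus $\{\mathrm{Nm}_f(L)=\omega_C(x+y),\ h^0\geq r+2\}$ (after showing the vanishing condition at $\widetilde{x},\widetilde{y}$ is superfluous), and then uses the tangent-space upper bound to exclude the possibility that all such limits have $\geq r+3$ sections; non-emptiness of $V^{r+1}$ in $\rr_{g+1}$ then forces non-emptiness of $V^{r+1}(f,x+y)\cong V^r(f)$. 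You would need this degeneration step, or an alternative such as the non-vanishing of the Lagrangian degeneracy-locus class from Subsection \ref{sub: Lagrangian}, to complete the proof. (The case $g<\frac{(r+1)(r+2)}{2}$, where one must show emptiness, is also not addressed in your write-up, though it does follow from Proposition \ref{prop: prym-petri 2-ram} by a dimension contradiction, or, as in the paper, from Brill--Noether generality of $\widetilde{C}$.)
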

\begin{proof} We divide the proof into three parts:
	\begin{itemize}
		\item Let $[L] \in V^r(f)$ with $h^0(\widetilde{C}, L) = r+1$. We start by showing $V^r(f)$ is smooth of dimension $g-\frac{(r+1)(r+2)}{2}$ at $[L]$. 
		\item We then show $V^r(f)$ is non-empty whenever $g \geq \frac{(r+1)(r+2)}{2}$. 
		\item Lastly, we show that $V^r(f)$ is empty whenever  $g < \frac{(r+1)(r+2)}{2}$.
	\end{itemize}
    	When $\textrm{Nm}_f(L) = \omega_C$, we have the Prym-Petri map 
    \[ v\colon H^0(\widetilde{C},L)\otimes H^0\text{\large(}\widetilde{C}, L(\widetilde{x}+\widetilde{y})\text{\large)}\rightarrow H^0(\widetilde{C}, \omega_{\widetilde{C}})^-\]
    As in \cite{Welters} and \cite{Kanev}, the tangent space $T_L(V^r(f))$ is identified with $\textrm{Im}(v)^\perp$. Hence, it suffices to show that $\textrm{Ker}(v) = S^2H^0(\widetilde{C}, L)$ to conclude $V^r(f) \setminus V^{r+1}(f)$ is smooth of dimension $g-\frac{(r+1)(r+2)}{2}$. This map fits into a commutative diagram 
    
    \[
    \begin{tikzcd}
    	H^0(\widetilde{C},L)\otimes H^0\text{\large(}\widetilde{C}, L(\widetilde{x}+\widetilde{y})\text{\large)} \arrow{r}{v}  \arrow[hookrightarrow]{d}{i} & H^0(\widetilde{C},\omega_{\widetilde{C}})^- \arrow[hookrightarrow]{d}{i} \\
    	H^0\text{\large(}\widetilde{C},L(\widetilde{x}+\widetilde{y})\text{\large)}\otimes H^0\text{\large(}\widetilde{C}, L(\widetilde{x}+\widetilde{y})\text{\large)}\arrow{r}{}& H^0\text{\large(}\widetilde{C},\omega_{\widetilde{C}}(\widetilde{x}+\widetilde{y})\text{\large)}^-
    \end{tikzcd}
    \] 
    and the kernel of the map downstairs is $S^2H^0\text{\large(}\widetilde{C},L(\widetilde{x}+\widetilde{y})\text{\large)}$, see Proposition \ref{prop: prym-petri 2-ram}. In particular 
    \[\textrm{Ker}(v) = S^2H^0\text{\large(}\widetilde{C}, L(\widetilde{x}+\widetilde{y})\text{\large)}\cap \text{\Large(}H^0(\widetilde{C},L)\otimes H^0\text{\large(}\widetilde{C}, L(\widetilde{x}+\widetilde{y})\text{\large)}\text{\Large)} = S^2H^0(\widetilde{C}, L)\]
    In conclusion, the tangent space of $V^r(f) \setminus V^{r+1}(f)$ has dimension $g-\frac{(r+1)(r+2)}{2}$ at any point. 
        
	Next we show that $V^r(f)$ is non-empty when $g-\frac{(r+1)(r+2)}{2} \geq 0$. For this we consider the map 
	\[ \pi\colon \cR_{g,2} \rightarrow \Delta_0^{\textrm{ram}}\subseteq \cR_{g+1} \]
	sending the double cover $f\colon \widetilde{C} \rightarrow C$ to $f'\colon \widetilde{C}_{/\widetilde{x}\sim\widetilde{y}} \rightarrow C_{/x\sim y} $. 
	
	For a generic element of $\cR_{g+1}$, an element of a Prym-Brill-Noether locus $V^{r+1}$ is simply a linear series $g^{r+1}_{2g}$ satisfying the norm condition. As such, when we degenerate the double cover to some $f'\colon \widetilde{C}_{/\widetilde{x}\sim\widetilde{y}} \rightarrow C_{/x\sim y} $ generic in the boundary divisor $\Delta_0^{\textrm{ram}}$, the limit of elements in $V^{r+1}$ must be limit linear series $g^{r+1}_{2g}$ satisfying the norm condition. We are interested in understanding the locus of Prym limit linear series, i.e. limit linear series $g^{r+1}_{2g}$ satisfying the norm condition. For a generic $f'\colon \widetilde{C}_{/\widetilde{x}\sim\widetilde{y}} \rightarrow C_{/x\sim y} $ in $\Delta_0^{\textrm{ram}}$, this locus can be identified with 
	\[\left\{ L \in \textrm{Pic}^{2g}(\widetilde{C}) \ | \ \textrm{Nm}_f(L) = \omega_C(x+y), h^0(\widetilde{C}, L) \geq r+2 \ \textrm{and} \  h^0(\widetilde{C}, L(-\widetilde{x}-\widetilde{y})) \geq r+1\right\} \]
	via normalizing the map $f'$. 
	
	Notice that the last condition is superfluous: Because $\textrm{Nm}_f(L-\widetilde{x}) = \omega_C(y)$ and all global sections of $\omega_C(y)$ vanish at $y$, it follows that all global sections of $L(-\widetilde{x})$ vanish at $\widetilde{y}$. In particular, the locus of Prym limit $g^{r+1}_{2g}$ on $f'$ is identified with 
	\[\left\{ L \in \textrm{Pic}^{2g}(\widetilde{C}) \ | \ \textrm{Nm}_f(L) = \omega_C(x+y), \ h^0(\widetilde{C}, L) \geq r+2 \right\}. \]
    Assume that all Prym limit $g^{r+1}_{2g}$ appearing as degenerations of Prym-Brill-Noether loci $V^r$ have at least $r+3$ independent global sections. This would imply 
	\[ \dim V^{r+s}(f,x+y) \geq g-\frac{(r+1)(r+2)}{2} \]
	for some $s\geq 2$. But the description of the tangent space above immediately implies 
	 \[\dim V^{r+s}(f,x+y)  \leq g-\frac{(r+s)(r+s+1)}{2}. \]
	Consequently, the locus $\dim V^{r+1}(f,x+y)$ is non-empty when $g-\frac{(r+1)(r+2)}{2}\geq 0$ as it must contain the normalization of the Prym limit $g^{r+1}_{2g}$ on $\Delta_0^{\textrm{ram}}\subseteq \rr_{g+1}$. 
	
    Using again the Serre duality 
	\[V^{r+1}(f,x+y) \cong V^r(f)\]
	we conclude that $V^r(f)$ is non-empty and has dimension at least $g-\frac{(r+1)(r+2)}{2}$.

	Lastly we prove that $V^r(f)$ is empty when $g-\frac{(r+1)(r+2)}{2} < 0$. All elements of $V^r(f)$ are $g^r_{2g-2}$ on the curve $\widetilde{C}$ of genus $2g$. But $\rho(2g, r, 2g-2) = 2g - (r+1)(r+2) < 0$ and the curve $\widetilde{C}$ is Brill-Noether general, see Remark \ref{rmk: BN-GP k = 2} and \cite[Theorem 4.1]{BudKodPrym}. This implies $V^r(f) = \emptyset$.
\end{proof}

\section{Twisted Prym-Brill-Noether loci}
\label{sec: twist}

In this section, we study the geometry of the twisted Prym-Brill-Noether loci
 \[ V^r_\eta(f) \coloneqq \left\{L\in \mathrm{Pic}^{2g-2+k}(\widetilde{C})\ | \ \textrm{Nm}_f(L) = \omega_C\otimes \eta \ \textrm{and} \ h^0(\widetilde{C}, L) \geq r+1 \right\} \]
where $f\colon\widetilde{C}\rightarrow C$ is generic in $\cR_{g,2k}$ for $0\leq k \leq 2$ and $\eta$ is its associated torsion line bundle. Analogously to Section \ref{sec: PBN ramified}, we will compute the dimension of $V^r_\eta(f)$ by understanding the tangent space at its points. This tangent space is described using twisted Prym-Gieseker-Petri maps, similar to Section \ref{sec: PBN ramified} and \cite{Welters}. The injectivity of the twisted Prym-Gieseker-Petri map is a consequence of the fact that $\widetilde{C}$ is Gieseker-Petri general for $f\colon \widetilde{C} \rightarrow C$ generic in $\cR_{g,2}$ or $\cR_{g,4}$. 

More generally, for a vanishing sequence $\textbf{a} = (0\leq a_0 < a_1 < \cdots< a_r\leq 2g-2+k)$, we consider the twisted Prym-Brill-Noether locus with prescribed vanishing $\textbf{a}$ at a point $p\in \widetilde{C}$: 

	\[
\mathrm{V}_\eta^{\textbf{a}}(f, p) := \left\{ L\in \mathrm{Pic}^{2g-2+k}(\widetilde{C}) \, \left\vert \, 
\begin{array}{l}
	\mathrm{Nm}(L)= \omega_C\otimes \eta,  \\[0.2cm]
	h^0\text{\large(}\widetilde{C}, L(-a_i\,p)\text{\large)}\geq r+1-i, \,\, \forall \, 0\leq i \leq r 
\end{array}
\right.
\right\}.
\]

Using the coupled Gieseker-Petri generality, see Theorem \ref{trm: coupled k = 1} and Theorem \ref{trm: coupled k = 2}, we can prove that this locus is of the expected dimension 
\[ \dim V^\textbf{a}_\eta(f,p) = g+ k - r-2- \sum_{i=0}^r a_i \]
when $f$ and $p\in \widetilde{C}$ are generic. We will denote 
\[ P \coloneqq \left\{L\in \mathrm{Pic}^{2g-2+k}(\widetilde{C}) \ | \ \mathrm{Nm}(L) = \omega_C\otimes \eta \right\}.\] 
Furthermore, we will compute the class of $\mathrm{V}_\eta^{\textbf{a}}(f, p)$ in the numerical equivalence ring $N^*(P, \mathbb{C})$ and the singular cohomology  $H^*(P, \mathbb{C})$.

Similar to the description in Subsection \ref{sub: Lagrangian}, we can realize the twisted Prym-Brill-Noether loci as Lagrangian degeneration loci. In the notation of Subsection \ref{sub: Lagrangian} modified to the twisted case, we have the isomorphism $\textrm{Nm}(\mathcal{L}) \cong q^*(\omega_C\otimes \eta)$. Looking at the vector bundle  $\mathcal{E} \coloneqq (id\times f)_*\mathcal{L}$, the norm condition implies 
\[ \wedge^2 \mathcal{E} \cong q^*\omega_C, \]
and hence we obtain a symplectic form  $\mathcal{E} \rightarrow q^*\omega_C$. 

As in Subsection \ref{sub: Lagrangian}, this symplectic form extends to a symplectic form on the vector bundle $\mathcal{V}$, with $\mathcal{U}, \mathcal{W}$ maximal isotropic subbundles of $\mathcal{V}$. In this setting we see that $V_\eta^r(f)$ is the locus 
\[ \left\{L \in P \ | \ \dim(\mathcal{U}\cap\mathcal{W})_{L} \geq r+1\right\} \]
and $\mathrm{V}_\eta^{\textbf{a}}(f, p)$ is 
\[ \left\{L \in P \ | \ \dim(\mathcal{U}\cap\mathcal{W}_i)_{L} \geq r+1-i \ \forall\  0\leq i \leq r\right\}. \]

These Lagrangian degeneracy loci are studied in \cite{Anderson-Fulton} and we immediately obtain one of the inequalities for the dimension estimate: The locus $V^\textbf{a}_\eta(f,p)$ is either empty or satisfies
\[ \dim V^\textbf{a}_\eta(f,p) \geq g+ k - r-2- \sum_{i=0}^r a_i = \dim(P) - (r+1) -\sum_{i=0}^ra_i. \]
For the converse inequality when $k = 1$ or $2$, we will look at the dimension of the tangent space and obtain:

\begin{trm} \label{trm: twist-dim} Let $f\colon \widetilde{C}\rightarrow C$ a generic element in $\cR_{g,2}$ and $\eta$ the torsion line bundle defining the double cover. Then the twisted Prym-Brill-Noether locus $ V^r_\eta(f) $ has dimension $g-\frac{(r+1)(r+2)}{2}$ and is smooth away from $V^{r+1}_\eta(f)$. 
\end{trm}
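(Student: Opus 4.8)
The plan is to prove the two inequalities separately. The lower bound $\dim V^r_\eta(f) \geq g - \frac{(r+1)(r+2)}{2}$ is already available from the realization of $V^r_\eta(f)$ as a symplectic Lagrangian degeneracy locus $\{L \in P \mid \dim(\mathcal{U} \cap \mathcal{W})_L \geq r+1\}$ together with the estimates of Anderson--Fulton, see Subsection \ref{sub: Lagrangian}. The substance of the theorem is therefore the reverse inequality together with smoothness, and for this I would bound the tangent space of $V^r_\eta(f)$ at a line bundle $[L]$ with $h^0(\widetilde{C}, L) = r+1$ exactly, showing it has dimension at most $g - \frac{(r+1)(r+2)}{2}$. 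Exactly as in the proof of Theorem \ref{trm: dim-ramified} (following \cite{Welters}), this tangent space is the annihilator inside $T_L P$ of the image of a twisted Prym--Gieseker--Petri map, so the task reduces to proving that this map is injective at every such $[L]$.

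To set up the map, I would first record that the norm condition $\mathrm{Nm}_f(L) = \omega_C \otimes \eta$ gives $L \otimes \iota^* L \cong f^*(\omega_C \otimes \eta) \cong \omega_{\widetilde{C}}$, hence $\iota^* L \cong \omega_{\widetilde{C}} \otimes L^\vee$; in particular $\iota^*$ identifies $H^0(\widetilde{C}, L)$ with $H^0(\widetilde{C}, \omega_{\widetilde{C}} \otimes L^\vee)$. Decomposing $H^0(\widetilde{C}, \omega_{\widetilde{C}}) = H^0(C, \omega_C) \oplus H^0(C, \omega_C \otimes \eta)$ into $\iota$-invariant and anti-invariant parts, the cotangent space $T_L^* P$ is identified with the anti-invariant summand $H^0(C, \omega_C \otimes \eta)$. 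The twisted Prym--Gieseker--Petri map is then the composition of the ordinary Petri map $\mu_0\colon H^0(\widetilde{C}, L) \otimes H^0(\widetilde{C}, \omega_{\widetilde{C}} \otimes L^\vee) \to H^0(\widetilde{C}, \omega_{\widetilde{C}})$ with $\mathrm{id} \otimes \iota^*$ and the projection onto the anti-invariant summand. The crucial point, reflecting the symplectic (rather than orthogonal) nature of the form $\wedge^2 \mathcal{E} \cong q^*\omega_C$, is that this composition is symmetric and hence factors through a map
\[ \mu_\eta\colon S^2 H^0(\widetilde{C}, L) \to H^0(C, \omega_C \otimes \eta); \]
this is what accounts for the codimension $\frac{(r+1)(r+2)}{2}$ rather than $\frac{r(r+1)}{2}$.

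The injectivity of $\mu_\eta$ I would deduce from the ordinary Gieseker--Petri theorem for $\widetilde{C}$: since $f$ is generic in $\cR_{g,2}$, the curve $\widetilde{C}$ is Gieseker--Petri general, see Remark \ref{rmk: BN-GP k = 2} and \cite{BudKodPrym}, so $\mu_0$ is injective. If $\xi \in S^2 H^0(\widetilde{C}, L)$ lies in the kernel of $\mu_\eta$, then its image $\mu_0(\xi) \in H^0(\widetilde{C}, \omega_{\widetilde{C}})$ is anti-invariant by the symmetry of $\xi$, while having trivial anti-invariant part by the hypothesis $\mu_\eta(\xi) = 0$; hence $\mu_0(\xi) = 0$ and injectivity of $\mu_0$ forces $\xi = 0$. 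This is the exact twisted counterpart of Proposition \ref{prop: prym-petri 2-ram} and the diagram argument in Theorem \ref{trm: dim-ramified}, with the roles of $\wedge^2$ and $S^2$ interchanged.

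Finally I would assemble the pieces. Injectivity of $\mu_\eta$ gives $\dim T_L V^r_\eta(f) = g - \frac{(r+1)(r+2)}{2}$ at every $[L] \in V^r_\eta(f) \setminus V^{r+1}_\eta(f)$; matching the lower bound, this shows $V^r_\eta(f)$ is smooth of the expected dimension away from $V^{r+1}_\eta(f)$. Nonemptiness for $g \geq \frac{(r+1)(r+2)}{2}$ follows from the non-vanishing of the Lagrangian degeneracy class on the abelian variety $P$, while emptiness for $g < \frac{(r+1)(r+2)}{2}$ follows by descending induction on $r$ from the tangent-space bound, since in that range the expected dimensions of all $V^s_\eta(f)$ with $s \geq r$ are negative. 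The step I expect to be the main obstacle is establishing the symmetry type of the twisted map, namely that it is genuinely a map out of $S^2 H^0(\widetilde{C}, L)$ into the anti-invariant sections, which is precisely the feature distinguishing the symplectic twisted setting from the orthogonal classical one of \cite{DeConciniPragacz}; verifying this sign requires a local analysis of $\iota$ at the two ramification points, where the canonical identification $\iota^*\omega_{\widetilde{C}} \cong \omega_{\widetilde{C}}$ acts by $-1$ on fibers.
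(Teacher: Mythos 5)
Your proposal is correct and follows essentially the same route as the paper: identify the tangent space at $[L]$ with the annihilator of the image of the twisted Prym--Gieseker--Petri map, observe that by the symplectic nature of the norm condition this map factors through $S^2H^0(\widetilde{C},L)\rightarrow H^0(C,\omega_C\otimes\eta)$, and deduce its injectivity from the Gieseker--Petri generality of $\widetilde{C}$ exactly as you do (the paper phrases this as the vanishing of $S^2H^0(\widetilde{C},L)\rightarrow H^0(\widetilde{C},\omega_{\widetilde{C}})^+$ combined with the injectivity of the full Petri map). The lower bound and non-emptiness via the Lagrangian degeneracy class are likewise handled in the paper just as you propose.
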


\begin{proof} Let $L \in V^r_\eta(f) \subseteq \textrm{Pic}^{2g-1}(\widetilde{C})$ satisfying $h^0(\widetilde{C},L) = r+1$. Our goal is to compute the dimension of the tangent space $T_L(V^r_\eta(f))$. 
	
	We know from \cite{Kanev} that $T_L(V^r_\eta(f))$ is identified with  $\textrm{Im}(v_1)^\perp$ where $v_1$ is the composition map 
	\[ H^0(\widetilde{C}, L) \otimes H^0(\widetilde{C}, \omega_{\widetilde{C}}\otimes L^\vee) \rightarrow H^0(\widetilde{C}, \omega_{\widetilde{C}}) \rightarrow H^0(\widetilde{C}, \omega_{\widetilde{C}})^-. \]

	Let $\iota\colon \widetilde{C} \rightarrow \widetilde{C}$ be the involution associated to the map $f$. The condition $\textrm{Nm}_f(L) = \omega_C\otimes \eta$ implies $L\otimes \iota^*L \cong \omega_{\widetilde{C}}$, and hence $\omega_{\widetilde{C}}\otimes L^\vee \cong \iota^*L$. 
	By composing our morphism with the isomorphism 
	\[ H^0(\widetilde{C}, L) \otimes H^0(\widetilde{C}, L) \xrightarrow{1\otimes \iota^*} H^0(\widetilde{C}, L) \otimes H^0(\widetilde{C},\omega_{\widetilde{C}}\otimes L^\vee) \]
	we conclude that $T_L(V^r_\eta(f))$ coincides with  $\textrm{Im}(v)^\perp$ for $v$ the map 
		\[ v\colon H^0(\widetilde{C}, L) \otimes H^0(\widetilde{C}, L) \rightarrow H^0(\widetilde{C}, \omega_{\widetilde{C}})^-  \] 
	By excluding the invariant sections, that are mapped to $0$ via $v$, the map above has the same image as the map 
	\[ S^2H^0(\widetilde{C}, L) \rightarrow H^0(\widetilde{C}, \omega_{\widetilde{C}})^-  \]
	We know from \cite[Theorem 4.1]{BudKodPrym} that for a generic $f\colon \widetilde{C} \rightarrow C$ in $\cR_{g,2}$, the curve $\widetilde{C}$ is Gieseker-Petri general. In particular, the map 
	\[ H^0(\widetilde{C}, L) \otimes H^0(\widetilde{C}, \omega_{\widetilde{C}}\otimes L^\vee) \rightarrow H^0(\widetilde{C}, \omega_{\widetilde{C}}) \] 
	is injective. On the other hand, the map 
	\[ S^2H^0(\widetilde{C}, L) \rightarrow H^0(\widetilde{C}, \omega_{\widetilde{C}})^+  \]
	is identically $0$.  
	These two results imply that 
	\[ S^2H^0(\widetilde{C}, L) \rightarrow H^0(\widetilde{C}, \omega_{\widetilde{C}})^-  \]
	is injective, and hence $T_L(V^r_\eta(f))$ has dimension $g-\frac{(r+1)(r+2)}{2}$. This implies the conclusion. 
\end{proof}

We are left to show that $V^r_\eta(f)$ is non-empty as soon as $g-\frac{(r+1)(r+2)}{2}\geq 0$. The methods of de Concini and Pragacz from \cite{DeConciniPragacz} apply to our situation. We already know that $V_\eta^r(f)$ is either empty or of expected dimension, hence we can use the formula in \cite[Type C: symplectic bundles, Corollary]{Anderson-Fulton} to compute its class in $N^*(P, \mathbb{C})$ and  $H^*(P, \mathbb{C})$. If this class is nonzero, our locus must be non-empty. 

 The formula for this class is given in \cite[Theorem 6.13]{Pragacz-sym}. In particular, to obtain the class $[V_\eta^r(f)]$ in $N^*(P, \mathbb{C})$ it is sufficient to compute the Chern classes $c(\mathcal{W}^\vee)$ and $c(\mathcal{U})$. These Chern classes are computed identically as in the unramified case, see \cite[Lemma 5]{DeConciniPragacz} and we have: 
 


 \begin{lm} \label{lm: total Chern} Let $\theta'$ be the restriction to $P$ of the class $\theta$ of the theta divisor on $\textrm{Pic}^{2g-1}(\widetilde{C})$. Then we have 
 	\begin{enumerate}
 		\item $c_i(\mathcal{W}^\vee) = \frac{(\theta')^i}{i!}$ for every $0\leq i \leq g$ and 
 		\item $c_i(\mathcal{U}) = 0$ for every $i > 0$.  
 	\end{enumerate}
  \end{lm}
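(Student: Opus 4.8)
The plan is to follow the argument of \cite[Lemma 5]{DeConciniPragacz} essentially verbatim, observing that neither the twist by $\eta$ nor the passage from an orthogonal to a symplectic form on $\mathcal{E}$ enters the computation of the Chern classes of the individual bundles $\mathcal{U}$ and $\mathcal{W}$: both reduce to the geometry of the normalized Poincar\'e bundle $\mathcal{L}$ on $P\times\widetilde{C}$. First I would fix the reduced divisor $D$ on $C$ to have large degree $N$ and to be disjoint from the branch locus $B$, so that $\widetilde{D}\coloneqq f^*D$ is a reduced divisor of degree $2N$ on $\widetilde{C}$ and $\mathcal{E}/\mathcal{E}(-D)$ is simply the restriction of $\mathcal{E}$ to the reduced subscheme $P\times D$. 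Writing $\widetilde{\gamma}\colon P\times\widetilde{C}\to P$ and $\widetilde{q}\colon P\times\widetilde{C}\to\widetilde{C}$ for the projections and using $\gamma\circ(\mathrm{id}\times f)=\widetilde{\gamma}$ together with the projection formula, I would record the identity $\mathcal{W}=\widetilde{\gamma}_*\bigl(\mathcal{L}\otimes\widetilde{q}^*\OO_{\widetilde{C}}(\widetilde{D})\bigr)$. The single geometric input I would isolate at this stage is that, for the chosen normalization, every slice $\mathcal{L}|_{P\times\{\widetilde{p}\}}$ is numerically trivial on $P$: it is the restriction to $P$ of a degree-zero line bundle under the autoduality of $\mathrm{Pic}(\widetilde{C})$.

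For the vanishing $c_i(\mathcal{U})=0$, note that since $P\times D$ is reduced and $D$ avoids $B$, the sheaf $\mathcal{E}/\mathcal{E}(-D)$ splits as $\bigoplus_{\widetilde{p}\in\widetilde{D}}\mathcal{L}|_{P\times\{\widetilde{p}\}}$ and is finite over $P$ via $\gamma$, so $\mathcal{U}=\gamma_*(\mathcal{E}/\mathcal{E}(-D))$ is a direct sum of $2N$ numerically trivial line bundles. By the Whitney formula $c(\mathcal{U})=1$, which is exactly the second assertion.

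For the first assertion I would apply Grothendieck--Riemann--Roch to $\widetilde{\gamma}$, whose relative tangent sheaf is $\widetilde{q}^*T_{\widetilde{C}}$; since $N$ is large the higher direct images vanish and $\mathrm{ch}(\mathcal{W})=\widetilde{\gamma}_*\bigl(e^{c_1(\mathcal{L})+[\widetilde{D}]}\cdot\widetilde{q}^*\mathrm{td}(T_{\widetilde{C}})\bigr)$. Here the factors pulled back from $\widetilde{C}$, together with the point class $[\widetilde{D}]$, only contribute to the rank and to numerically trivial corrections, while the positive-degree part of $\mathrm{ch}(\mathcal{W})$ is governed by the standard Poincar\'e relation $\widetilde{\gamma}_*\bigl(c_1(\mathcal{L})^2\bigr)=-2\theta'$ and the numerical triviality of the slices recorded above. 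Combining these yields $c(\mathcal{W})=e^{-\theta'}$, equivalently $c_i(\mathcal{W}^\vee)=(\theta')^i/i!$; the stated range $0\le i\le g$ is automatic.

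The step I expect to be the main obstacle is the verification of the two Poincar\'e-class inputs in the twisted setting, namely that the normalization $\mathrm{Nm}(\mathcal{L})\cong q^*(\omega_C\otimes\eta)$ genuinely forces the slices $\mathcal{L}|_{P\times\{\widetilde{p}\}}$ to be numerically trivial and that the theta part of $c_1(\mathcal{L})$ restricts from $\mathrm{Pic}^{2g-2+k}(\widetilde{C})$ to the Prym variety $P$ as exactly $\theta'$ rather than a multiple of it. This is precisely the point where the normalization must be tracked carefully; once the identification of the restricted universal theta class with $\theta'$ is in place, the remainder of the computation is formally identical to the unramified orthogonal case treated in \cite{DeConciniPragacz}.
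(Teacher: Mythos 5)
Your proposal is correct and follows the same route as the paper, which gives no independent argument for this lemma but simply asserts that the Chern classes "are computed identically as in the unramified case" with a citation to \cite[Lemma 5]{DeConciniPragacz}. Your write-up is a faithful expansion of exactly that computation (splitting $\mathcal{E}/\mathcal{E}(-D)$ into numerically trivial slices for $c(\mathcal{U})=1$, and Grothendieck--Riemann--Roch with the Poincar\'e relation for $c(\mathcal{W})=e^{-\theta'}$), and your closing caveat about tracking the normalization is handled by the lemma's own definition of $\theta'$ as the restriction of $\theta$ to $P$.
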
 

Using Theorem \ref{trm: twist-dim}, Lemma \ref{lm: total Chern} and \cite[Theorem 6.13]{Pragacz-sym}, we immediately obtain the formula for $[V_\eta^r(f)]$. 
 \begin{trm} Let $f\colon \widetilde{C}\rightarrow C$ a generic element in $\cR_{g,2}$ and $\eta$ the torsion line bundle defining the double cover. The class of $V^r_\eta(f)$ in $N^*(P, \mathbb{C})$ (or $H^*(P, \mathbb{C})$) is given by the formula 
 	\[ [V^r_\eta(f)] = \prod_{i=1}^{r+1}\frac{i!}{(2i)!}\cdot (\theta')^{\frac{(r+1)(r+2)}{2}}\]
 	In particular, the locus $V^r_\eta(f)$ is non-empty when $g-\frac{(r+1)(r+2)}{2} \geq 0$. 
 \end{trm}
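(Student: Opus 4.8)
The plan is to read the class off directly from the realization of $V^r_\eta(f)$ as a symplectic (type C) Lagrangian degeneracy locus constructed above, namely $V^r_\eta(f) = \{L \in P \mid \dim(\mathcal{U}\cap\mathcal{W})_L \geq r+1\}$ with $\mathcal{U}, \mathcal{W}$ maximal isotropic subbundles of the symplectic bundle $\mathcal{V}$. By Theorem \ref{trm: twist-dim} this locus is either empty or of pure expected codimension $\frac{(r+1)(r+2)}{2}$ in $P$ (which has dimension $g$), so the hypotheses of the universal degeneracy formula are satisfied and $[V^r_\eta(f)]$ is given by the corresponding universal polynomial in the Chern classes of $\mathcal{U}$ and $\mathcal{W}$.

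First I would invoke Pragacz's formula \cite[Theorem 6.13]{Pragacz-sym}, equivalently the type C case of \cite{Anderson-Fulton}: the class of the locus where two Lagrangian subbundles meet in dimension at least $r+1$ is the Schur $\widetilde{Q}$-polynomial attached to the staircase partition $\rho_{r+1} = (r+1, r, \dots, 1)$, expressed as a Pfaffian whose entries are polynomials in the Chern classes of $\mathcal{W}^\vee$ and $\mathcal{U}$. Next I would substitute the values from Lemma \ref{lm: total Chern}. Because $c_i(\mathcal{U}) = 0$ for $i > 0$, the Pfaffian collapses to one built solely from $c_i(\mathcal{W}^\vee) = \frac{(\theta')^i}{i!}$; the outcome is homogeneous of degree $|\rho_{r+1}| = \frac{(r+1)(r+2)}{2}$ in $\theta'$, hence a scalar multiple of $(\theta')^{\frac{(r+1)(r+2)}{2}}$, and only the scalar remains to be determined.

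I expect the main obstacle to be precisely this scalar: the evaluation of the staircase $\widetilde{Q}$-polynomial at the exponential specialization $c_i = \frac{(\theta')^i}{i!}$. Concretely this is the Pfaffian of a matrix whose entries are ratios of factorials times powers of $\theta'$, which I would evaluate by the classical Schur--de Bruijn Pfaffian identity to extract the constant $\prod_{i=1}^{r+1}\frac{i!}{(2i)!}$. This is the exact symplectic counterpart of the computation of de Concini and Pragacz in the orthogonal (unramified) case \cite{DeConciniPragacz}; the structural differences are that the type C formula carries no power-of-two factor and uses the longer staircase $\rho_{r+1}$ in place of $\rho_r$. As a sanity check I would verify the case $r = 0$, where the formula reads $[V^0_\eta(f)] = \frac{1!}{2!}\,\theta' = \frac{1}{2}\theta'$; this is consistent with $V^0_\eta(f)$ being a principal theta divisor of $P$, since the restriction $\theta'$ of the Jacobian theta divisor to $P$ equals twice the principal polarization.

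Finally, for the non-emptiness statement I would argue as follows. The universal class is supported on $V^r_\eta(f)$, so it would vanish were the locus empty. When $g - \frac{(r+1)(r+2)}{2} \geq 0$ the exponent $\frac{(r+1)(r+2)}{2}$ does not exceed $\dim P = g$, and since $\theta'$ is ample on $P$ its self-intersections are nonzero up to the top dimension; hence $[V^r_\eta(f)] = \prod_{i=1}^{r+1}\frac{i!}{(2i)!}\,(\theta')^{\frac{(r+1)(r+2)}{2}}$ is a nonzero class in $H^*(P,\mathbb{C})$, forcing $V^r_\eta(f) \neq \emptyset$.
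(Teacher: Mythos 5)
Your proposal is correct and follows essentially the same route as the paper: realize $V^r_\eta(f)$ as a type C Lagrangian degeneracy locus of expected codimension (via Theorem \ref{trm: twist-dim}), apply the Pragacz/Anderson--Fulton formula with the Chern class values of Lemma \ref{lm: total Chern}, evaluate the resulting Pfaffian as in de Concini--Pragacz to get the constant $\prod_{i=1}^{r+1}\frac{i!}{(2i)!}$, and deduce non-emptiness from the non-vanishing of $(\theta')^{\frac{(r+1)(r+2)}{2}}$ for an ample $\theta'$ when the exponent is at most $\dim P = g$. The only difference is that you spell out the Pfaffian evaluation and the $r=0$ sanity check, which the paper leaves to the citations.
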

The coefficient on the right-hand side is computed as in \cite[Proposition 6]{DeConciniPragacz}.  
 
 When $g = \frac{(r+1)(r+2)}{2}$ the space $V^r_\eta(f)$ consists of finitely many points. We can use the theorem above to compute its cardinality.
 \begin{trm}
 	Let $g = \frac{(r+1)(r+2)}{2}$, $f\colon \widetilde{C} \rightarrow C$ generic in $\cR_{g,2}$ and $\eta$ its associated torsion line bundle. Then the locus $V^r_\eta(f)$ has cardinality
 	\[  \prod_{i=1}^{r+1}\frac{i!}{(2i)!}\cdot (\theta')^{\frac{(r+1)(r+2)}{2}} = 2^g\cdot g! \cdot \prod_{i=1}^{r+1}\frac{i!}{(2i)!}. \]
 \end{trm}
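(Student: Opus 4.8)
The plan is to evaluate the self-intersection number $(\theta')^g$ on the Prym variety $P$, since by the previous theorem the cardinality of the zero-dimensional locus $V^r_\eta(f)$ equals $\prod_{i=1}^{r+1}\frac{i!}{(2i)!}\cdot (\theta')^{g}$ whenever $g = \frac{(r+1)(r+2)}{2}$. The key fact is that $P$ is a translate of a $g$-dimensional abelian variety, and $\theta'$ is the restriction to $P$ of the principal theta divisor on $\textrm{Pic}^{2g-1}(\widetilde{C})$. Thus the whole computation reduces to understanding the polarization type that $\theta'$ induces on $P$.

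First I would recall the structure of the norm map for a ramified double cover in $\cR_{g,2}$: the curve $\widetilde{C}$ has genus $2g$, and $P = \{L : \textrm{Nm}(L) = \omega_C\otimes\eta\}$ is a translate of the connected component $P_0 = \ker(\textrm{Nm})^0$, which is the Prym variety of dimension $g$. The essential point is Mumford's analysis: for ramified covers the restriction of the principal polarization $\theta$ on $J(\widetilde{C})$ to the Prym subvariety $P_0$ induces a polarization whose type is \emph{twice} a principal polarization, in contrast to the unramified case where one gets a genuine principal polarization on each of the two components $P^\pm$. Concretely, I would use that $\theta'$ is algebraically equivalent to $2\Xi$ for $\Xi$ a principal polarization on the $g$-dimensional abelian variety $P$. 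This factor of $2$ is exactly what distinguishes the ramified situation from the unramified one and is consistent with the class formula in the previous theorem, where the factor $2^{r(r+1)/2}$ appearing in the de Concini--Pragacz formula for $V^r(f)$ is \emph{absent} from the twisted formula.

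Granting $\theta' \equiv 2\Xi$, the computation is immediate: for a principal polarization $\Xi$ on a $g$-dimensional abelian variety one has $(\Xi)^g = g!$, and therefore
\[ (\theta')^g = (2\Xi)^g = 2^g\,(\Xi)^g = 2^g\cdot g!. \]
Substituting this into the class formula gives
\[ \prod_{i=1}^{r+1}\frac{i!}{(2i)!}\cdot (\theta')^{g} = 2^g\cdot g!\cdot \prod_{i=1}^{r+1}\frac{i!}{(2i)!}, \]
which is precisely the asserted cardinality. The smoothness away from $V^{r+1}_\eta(f)$ established in Theorem \ref{trm: twist-dim} guarantees that, in the boundary case $g = \frac{(r+1)(r+2)}{2}$, the locus $V^r_\eta(f)$ is reduced of dimension $0$, so this intersection number genuinely counts points with multiplicity one.

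I expect the main obstacle to be the careful justification that $\theta'$ is the \emph{double} of a principal polarization rather than principal itself. The cleanest route is to invoke Mumford's description of the polarization on Prym varieties of branched covers directly, or alternatively to compute the type of $\theta'$ via the exact sequence relating $J(\widetilde{C})$, $J(C)$ and $P$ and tracking the induced polarization under $\textrm{Nm}$ and $f^*$; one must verify that the pullback $f^*$ is injective with image contained in $P_0$ and that the intersection $f^*J(C)\cap P_0$ has the expected order, which produces the factor of $2$. Once the polarization type is pinned down, the remainder is the elementary intersection-theoretic computation $(2\Xi)^g = 2^g g!$ on a principally polarized abelian variety.
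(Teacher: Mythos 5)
Your proposal is correct and follows essentially the same route as the paper, which simply invokes the preceding class formula together with the fact that $\theta'$ restricts to a polarization of type $(2,\dots,2)$ on the $g$-dimensional Prym variety, so that $(\theta')^g=(2\Xi)^g=2^g\,g!$. The only slip is your aside that in the unramified case the restriction of the theta divisor to $P^\pm$ is principal --- there too it is twice a principal polarization (which is exactly where the factor $2^{r(r+1)/2}$ in the de Concini--Pragacz formula comes from, since that formula is written in terms of the principal class $\xi$ rather than the restricted class) --- but this does not affect your computation for $k=1$.
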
 
 
This approach can be extended completely analogously to the case when $f \colon\widetilde{C} \rightarrow C$ is a generic element of $\cR_{g,4}$.

\begin{trm} \label{twist-dim} Let $f\colon \widetilde{C}\rightarrow C$ a generic element in $\cR_{g,4}$ and $\eta$ the associated torsion line bundle on $C$. Then the twisted Prym-Brill-Noether locus $V^r_\eta(f)$ has dimension $g+1-\frac{(r+1)(r+2)}{2}$ and is smooth away from $V^{r+1}_\eta(f)$. 
	
	The class of $V^r_\eta(f)$ in $N^*(P, \mathbb{C})$ \normalfont{(or $H^*(P, \mathbb{C})$)} is given by the formula 
	 \[ [V^r_\eta(f)] = \prod_{i=1}^{r+1}\frac{i!}{(2i)!}\cdot (\theta')^{\frac{(r+1)(r+2)}{2}}\]
	and when $g + 1 = \frac{(r+1)(r+2)}{2}$ the Prym-Brill-Noether locus $V^r_\eta(f)$ consists of 
	\[	\prod_{i=1}^{r+1}\frac{i!}{(2i)!}\cdot (\theta')^{g+1} = (g+1)!\cdot 2^{g}\cdot \prod_{i=1}^{r+1}\frac{i!}{(2i)!} \]
	reduced points.
\end{trm}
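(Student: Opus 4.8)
The plan is to mirror, essentially verbatim, the argument already given for the $\cR_{g,2}$ case in Theorem \ref{trm: twist-dim} and its corollaries, with the only genuine input being the replacement of Gieseker-Petri generality for $\cR_{g,2}$ by the corresponding statement for $\cR_{g,4}$. First I would fix a generic $f\colon\widetilde{C}\rightarrow C$ in $\cR_{g,4}$; here $\widetilde{C}$ has genus $2g+3$ (two copies of $C$ glued at the four ramification points, so $g_{\widetilde{C}} = 2g+1+ (k-1) = 2g+3$ for $k=2$), and $P$ is a translate of a $(g+1)$-dimensional abelian variety, giving $\dim P = g+1$. The Lagrangian-degeneracy description from Subsection \ref{sub: Lagrangian} (in its twisted, symplectic form) already yields the lower bound $\dim V^r_\eta(f)\geq g+1-\frac{(r+1)(r+2)}{2}$, so the substance is the matching upper bound via the tangent space.

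For the upper bound I would take $L\in V^r_\eta(f)$ with $h^0(\widetilde{C},L)=r+1$ and invoke Kanev's identification of $T_L(V^r_\eta(f))$ with $\mathrm{Im}(v)^\perp$, where, after using $\omega_{\widetilde{C}}\otimes L^\vee\cong\iota^*L$ and discarding the invariant part, $v$ is the twisted Prym-Gieseker-Petri map $S^2H^0(\widetilde{C},L)\rightarrow H^0(\widetilde{C},\omega_{\widetilde{C}})^-$. The key point is injectivity of this map. This follows exactly as before: Theorem \ref{BN-GP general} guarantees that for a general element of $\cR_{g,4}$ the curve $\widetilde{C}$ is Gieseker-Petri general, so the full multiplication map $H^0(\widetilde{C},L)\otimes H^0(\widetilde{C},\omega_{\widetilde{C}}\otimes L^\vee)\rightarrow H^0(\widetilde{C},\omega_{\widetilde{C}})$ is injective; restricting to the symmetric tensors and noting that $S^2H^0(\widetilde{C},L)$ maps to zero in the invariant part $H^0(\widetilde{C},\omega_{\widetilde{C}})^+$, one gets injectivity into $H^0(\widetilde{C},\omega_{\widetilde{C}})^-$. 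Injectivity forces $\dim\mathrm{Im}(v)=\binom{r+2}{2}=\frac{(r+1)(r+2)}{2}$, whence $\dim T_L(V^r_\eta(f))=\dim P-\frac{(r+1)(r+2)}{2}=g+1-\frac{(r+1)(r+2)}{2}$, matching the lower bound and simultaneously proving smoothness away from $V^{r+1}_\eta(f)$.

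For the class computation I would apply the symplectic Lagrangian degeneracy formula of \cite[Theorem 6.13]{Pragacz-sym} (equivalently \cite[Type C]{Anderson-Fulton}), which is applicable precisely because $V^r_\eta(f)$ is now known to be of expected dimension. The required Chern class inputs $c_i(\mathcal{W}^\vee)=(\theta')^i/i!$ and $c_i(\mathcal{U})=0$ for $i>0$ are supplied by Lemma \ref{lm: total Chern}, whose proof is insensitive to the number of ramification points (it is computed identically to the unramified case of \cite[Lemma 5]{DeConciniPragacz}). Plugging these in yields the stated formula $[V^r_\eta(f)]=\prod_{i=1}^{r+1}\frac{i!}{(2i)!}\cdot(\theta')^{\frac{(r+1)(r+2)}{2}}$, the coefficient being evaluated as in \cite[Proposition 6]{DeConciniPragacz}; nonvanishing of this class gives non-emptiness when $g+1-\frac{(r+1)(r+2)}{2}\geq 0$. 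Finally, when $g+1=\frac{(r+1)(r+2)}{2}$ the locus is zero-dimensional, and its cardinality is obtained by evaluating $(\theta')^{g+1}$ on $P$; since $\theta'$ is a principal polarization on the $(g+1)$-dimensional abelian variety $P$ one has $(\theta')^{g+1}=(g+1)!$, and the extra factor $2^{g}$ arises exactly as in \cite{DeConciniPragacz} from the translation/degree-two nature of the norm construction. I expect no genuine obstacle: the only thing to verify with care is that the dimension bookkeeping ($\dim P=g+1$ and the self-intersection $(\theta')^{g+1}=(g+1)!$) is consistent with $k=2$, and that Lemma \ref{lm: total Chern} and the symplectic formula transfer without change—both of which are already asserted in the surrounding text.
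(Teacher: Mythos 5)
Your proposal follows exactly the route the paper intends: the paper's entire proof of this theorem is the sentence that the $\cR_{g,2}$ argument of Theorem \ref{trm: twist-dim} ``extends completely analogously,'' and your write-up (tangent space via Kanev and the symmetric twisted Prym--Petri map, injectivity from Theorem \ref{BN-GP general}, the type-C Lagrangian lower bound, Lemma \ref{lm: total Chern} plus the Pragacz/Anderson--Fulton formula) is precisely that extension. Two bookkeeping slips should be corrected, though neither derails the argument. First, the genus of $\widetilde{C}$ is $2g+1$, not $2g+3$: Riemann--Hurwitz gives $g_{\widetilde{C}} = 2g-1+k$, and ``two copies of $C$ glued at the four ramification points'' describes a degenerate admissible cover, not the smooth source curve (the paper itself places $[\widetilde{C},\widetilde{x}]$ in $\cM_{2g+1,1}$). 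Your subsequent use of $\dim P = g_{\widetilde{C}}-g = g+1$ is correct, but it is inconsistent with the genus you wrote down. Second, your evaluation of $(\theta')^{g+1}$ is internally contradictory: you assert both that $\theta'$ is a principal polarization (which would give $(\theta')^{g+1}=(g+1)!$) and that an ``extra factor $2^g$'' appears. In fact $\theta'$ restricts to $P$ as a polarization of type $(1,2,\dots,2)$ with $g$ twos when $k=2$ (it is a principal polarization, up to the factor $2$, only when $k=1$, which is why $\mathcal{P}_{g,2}$ lands in $\cA_g$), so $(\theta')^{g+1}=(g+1)!\prod d_i=(g+1)!\cdot 2^{g}$ directly; no separate ``translation'' argument is needed or available.
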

 
 In fact, using the coupled Gieseker-Petri generality appearing in Theorem \ref{trm: coupled k = 1} and Theorem \ref{trm: coupled k = 2}, we can show that 	\[\dim \mathrm{V}_\eta^{\textbf{a}}(f, p) =g+ k - r- 2-\sum_{i=0}^ra_i\]
 when $f\colon \widetilde{C}\rightarrow C$ is a generic double cover in $\cR_{g,2k}$ for $k =1$ or $2$. Moreover, because this twisted Prym-Brill-Noether locus is described as a Lagrangian degeneration, we can use \cite[Type C: symplectic bundles, Corollary]{Anderson-Fulton}  to compute its class. 
 
 \begin{trm} \label{trm: pointed-twist-dim} 
 	Let $f \colon \widetilde{C} \rightarrow C$ be a generic element of $\cR_{g,2k}$ for $k = 1$ or $2$. Moreover, let 
 	\[ \textbf{a} = (0\leq a_0 < a_1 < \cdots < a_r\leq 2g-2+k)\] 
 	be a vanishing sequence and denote $|\textbf{a}| \coloneqq \sum_{i=0}^r a_i$. Then the locus 
 	\[
 	\mathrm{V}_\eta^{\textbf{a}}(f, p) := \left\{ L\in \mathrm{Pic}^{2g-2+k}(\widetilde{C}) \, \left\vert \, 
 	\begin{array}{l}
 		\mathrm{Nm}(L)= \omega_C\otimes \eta,  \\[0.2cm]
 		h^0\text{\large\normalfont(}\widetilde{C}, L(-a_i\,p)\text{\large\normalfont)}\geq r+1-i, \,\, \forall \, i 
 	\end{array}
 	\right.
 	\right\}.
 	\]
 	has dimension $g + k - r- 2 - |\textbf{a}|$. Moreover, its class in $N^*(P, \mathbb{C})$ \normalfont{(or $H^*(P, \mathbb{C})$)} is given by the formula 
 	\[  \prod_{i=0}^r \frac{1}{(a_i+1)!}\prod_{0\leq j < i \leq r} \frac{a_i-a_j}{a_i+a_j+2} (\theta')^{|\textbf{a}| + r +1}\]
 \end{trm}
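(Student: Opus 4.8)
The plan is to establish the dimension by two matching inequalities and then extract the class from the symplectic degeneracy-locus description. The lower bound $\dim \mathrm{V}_\eta^{\textbf{a}}(f,p) \geq g+k-r-2-|\textbf{a}|$ is already in hand: the realization of $\mathrm{V}_\eta^{\textbf{a}}(f,p)$ as the locus $\{L\in P \mid \dim(\mathcal{U}\cap\mathcal{W}_i)_L \geq r+1-i \ \forall i\}$, together with \cite[Type C: symplectic bundles, Corollary]{Anderson-Fulton}, shows it is either empty or of at least the expected dimension. The substance of the argument is thus the reverse inequality, obtained by bounding the tangent space at a generic point, and non-emptiness, which will follow once the class is shown to be nonzero.

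For the tangent space, fix $[L]\in \mathrm{V}_\eta^{\textbf{a}}(f,p)$ at which all vanishing inequalities are equalities, i.e. $h^0(\widetilde{C}, L(-a_i p)) = r+1-i$ for every $i$. Writing $\iota$ for the involution of $f$, the twisted norm condition gives $L\otimes\iota^*L\cong\omega_{\widetilde{C}}$, so $\iota^*$ identifies $\omega_{\widetilde{C}}\otimes L^\vee$ with $\iota^*L$ and converts vanishing of order $a$ at $p$ into vanishing of order $a$ at $\iota(p)$. Exactly as in the proof of Theorem \ref{trm: twist-dim}, the tangent space $T_L\,\mathrm{V}_\eta^{\textbf{a}}(f,p)$ is identified, via Serre duality on anti-invariant cohomology, with the annihilator inside $H^1(\widetilde{C},\OO_{\widetilde{C}})^-$ of the image of the anti-invariant part of the coupled Gieseker-Petri map $\mu^L_{p,\iota(p)}$ of the pointed curve $[\widetilde{C}, p, \iota(p)]$, restricted to the piece of the coupled tensor space $T^L_{p,\iota(p)}$ singled out by the sequence $\textbf{a}$. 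The crucial observation is that $p$ and $\iota(p)$ are precisely the two preimages $y_1, y_2$ of a generic point of $C$, so the coupled Gieseker-Petri condition established in Theorem \ref{trm: coupled k = 1} and Theorem \ref{trm: coupled k = 2} applies and guarantees injectivity of $\mu^L_{p,\iota(p)}$. As in Theorem \ref{trm: twist-dim} the invariant sections are killed, and injectivity forces the relevant image to attain the full dimension $(r+1)+|\textbf{a}|$ of that coupled-tensor piece; since $\dim H^1(\widetilde{C},\OO_{\widetilde{C}})^- = \dim P = g-1+k$, we obtain $\dim T_L\,\mathrm{V}_\eta^{\textbf{a}}(f,p) = g+k-r-2-|\textbf{a}|$. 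This yields the reverse inequality, matches the degeneracy-locus lower bound, and shows smoothness at such $[L]$.

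With the dimension pinned down, the class follows formally. Since $\mathrm{V}_\eta^{\textbf{a}}(f,p)$ is of expected dimension, the formula of \cite[Type C: symplectic bundles, Corollary]{Anderson-Fulton}, in the form of \cite[Theorem 6.13]{Pragacz-sym}, expresses $[\mathrm{V}_\eta^{\textbf{a}}(f,p)]$ as a Pfaffian-type (Schur $\widetilde{Q}$) polynomial in the Chern classes of $\mathcal{U}$ and the $\mathcal{W}_i^\vee$. By the pointed analogue of Lemma \ref{lm: total Chern} one has $c(\mathcal{U})=1$, while the class of $\mathcal{W}_i^\vee$ is obtained from that of $\mathcal{W}^\vee$ by the twist by $a_i p$, so that $c_j(\mathcal{W}_i^\vee)$ is the corresponding shift of $(\theta')^j/j!$. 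Substituting these and simplifying the resulting expression exactly as in \cite{DeConciniPragacz} and \cite{tarasca-pointed-prym} produces
\[ [\mathrm{V}_\eta^{\textbf{a}}(f, p)] = \prod_{i=0}^r \frac{1}{(a_i+1)!}\prod_{0\leq j < i \leq r} \frac{a_i-a_j}{a_i+a_j+2}\,(\theta')^{|\textbf{a}| + r +1}. \]
As this is a nonzero rational multiple of a power of the ample class $\theta'$, it is nonzero, which forces $\mathrm{V}_\eta^{\textbf{a}}(f,p)$ to be non-empty and hence pure of the expected dimension.

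The main obstacle I anticipate is the precise setup and bookkeeping of the coupled map: one must verify that the filtration of $H^0(\widetilde{C},L)$ by vanishing order at $p$, paired via $\iota$ with the conjugate filtration at $\iota(p)$ on $\iota^*L$, matches the degeneracy filtration by the subbundles $\mathcal{W}_i$, and that the anti-invariant projection interacts with the coupled-symmetric structure so that the image has dimension \emph{exactly} $(r+1)+|\textbf{a}|$ rather than merely at most that. The injectivity input is supplied cleanly by the coupled Gieseker-Petri generality, but converting it into the statement that the relevant coupled products are independent modulo the invariant part is the delicate step; the subsequent class computation is a formal, if lengthy, specialization of the Anderson--Fulton formula already carried out in the non-pointed case.
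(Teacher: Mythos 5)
Your proposal is correct and follows essentially the same route as the paper: the upper bound on the dimension comes from identifying the tangent space at a point with all vanishing orders exact as the perpendicular of the image of the anti-invariant pointed Gieseker--Petri map, whose injectivity is supplied by the coupled Gieseker--Petri condition at $p$ and $\iota(p)$ (Theorems \ref{trm: coupled k = 1} and \ref{trm: coupled k = 2}), while the lower bound and the class formula come from the Type~C Lagrangian degeneracy locus description via \cite{Anderson-Fulton}. The only detail you defer --- that the source of the anti-invariant map has dimension exactly $|\textbf{a}|+r+1$ --- is handled in the paper by the Riemann--Roch computation $h^0(\iota^*L(a_ip))-h^0(L(-a_{i+1}p))=a_i+1$, which confirms your count.
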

\begin{proof} For the dimension count, we consider a line bundle $[L]\in V^\textbf{a}_\eta(f,p)$ satisfying $h^0(\widetilde{C}, L-a_ip) = r+1-i$ for every $0\leq i\leq r$. We prove that the tangent space of $V^\textbf{a}_\eta(f,p)$ at this point $L$ has dimension $g + k - r- 2 - |\textbf{a}|$. This tangent space has a similar description to the one in \cite{tarasca-pointed-prym}.

We consider some sections 
\begin{align*}
	&\sigma_i \in H^0\text{\large(} \widetilde{C}, L(-a_i \, p))\setminus H^0( \widetilde{C}, L(-a_{i+1} \, p)\text{\large)}  \quad\mbox{ for } i=0,\dots, r-1, \\
	&\sigma_r \in H^0\text{\large(} \widetilde{C},  L(-a_r \, p)\text{\large)}.
\end{align*}
and the morphism 
\[
\mu \colon \bigoplus_{i=0}^r \, \langle \sigma_i \rangle  \otimes H^0\text{\large(}\widetilde{C}, \iota^*L(a_i \, p)\text{\large)} \rightarrow H^0(\widetilde{C}, \omega_{\widetilde{C}}).
\]

Splitting this morphism into its invariant and anti-invariant parts we obtain the map
\[
\overline{\mu}\colon \bigoplus_{i=0}^r  \,\langle \sigma_i \rangle  \otimes H^0\text{\large(}\widetilde{C}, \iota^*L(a_i \, p)\text{\large)} \Big/ \iota^* H^0\text{\large(} \widetilde{C}, L(-a_{i+1} \, p)\text{\large)} \longrightarrow  H^0(C, \omega_C \otimes \eta).
\]
This anti-invariant map determines the tangent space of $V^\textbf{a}_\eta(f,p)$ at the point $L$. The space $T_L(V_\eta^\textbf{a}(f,p))$ is identified with $\textrm{Im}(\overline{\mu})^\perp$. 

Because $\widetilde{C}$ satisfies the coupled Gieseker-Petri condition, it follows that $\mu$ is injective. This immediately implies that $\overline{\mu}$ is injective. In particular, the tangent space at $[L]$ has dimension 
\[ g+k-1 - \sum_{i=0}^{r} \left(h^0\text{\large(}\widetilde{C}, \iota^*L(a_i \, p)\text{\large)} - h^0\text{\large(} \widetilde{C}, L(-a_{i+1} \, p)\text{\large)}\right). \]
Via the Riemann-Roch theorem, this dimension is equal to   
\[ g+k-1 - \sum_{i=0}^{r} \left(a_i+r+1-i - r+i \right) = g+ k - r- 2-\sum_{i=0}^ra_i. \]
This implies 
\[ \dim \mathrm{V}_\eta^{\textbf{a}}(f, p) \leq g+ k - r- 2-\sum_{i=0}^ra_i. \]

For the reverse inequality, we realize this locus as a Lagrangian degeneracy locus. In the notation of Subsection \ref{sub: Lagrangian}, we have 
	\[ 	\mathrm{V}_\eta^{\textbf{a}}(f, p) = \left\{L \in P \ | \ \dim(\mathcal{U}\cap\mathcal{W}_i)_{L} \geq r+1-i \ \forall\  0\leq i \leq r\right\}. \]
	This locus is a degeneracy locus of Type C, and its dimension is at least $g+k-r-2-\sum_{i=0}^{r}a_i$. The formula for the class $[\mathrm{V}_\eta^{\textbf{a}}(f, p)]$ is an immediate consequence of \cite[Type C: symplectic bundles, Corollary]{Anderson-Fulton}.
\end{proof}
 
 Lastly, we consider the unramified case $k=0$. Let $f\colon \widetilde{C} \rightarrow C$ be a generic \'etale double cover in $\cR_g$ and let $\eta$ be its associated torsion line bundle. We consider the twisted Prym-Brill-Noether loci 
 \[ V^r_\eta(f) \coloneqq \left\{L\in \mathrm{Pic}^{2g-2}(\widetilde{C})\ | \ \textrm{Nm}_f(L) = \omega_C\otimes \eta, \ \textrm{and} \ h^0(\widetilde{C}, L) \geq r+1 \right\}. \]

\begin{trm} \label{trm: unramified-twist}
	Let $f\colon\widetilde{C}\rightarrow C $ a generic element of $\cR_g$ and let $\eta$ its associated torsion line bundle. Then the locus $ V^r_\eta(f)$ has dimension $g-1-\frac{(r+1)(r+2)}{2}$. 
\end{trm}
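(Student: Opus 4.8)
The plan is to run the tangent-space argument of Theorem~\ref{trm: twist-dim} essentially verbatim, isolating the one point where the étale case genuinely differs: here $\widetilde{C}$ carries the involution $\iota$ and hence fails to be Gieseker-Petri general, so the injectivity invoked in Theorem~\ref{trm: twist-dim} cannot be deduced from \cite[Theorem 4.1]{BudKodPrym} and must instead be replaced by a genuine Prym-Petri statement. Concretely, for $[L]\in V^r_\eta(f)$ with $h^0(\widetilde{C},L)=r+1$, the condition $\mathrm{Nm}_f(L)=\omega_C\otimes\eta$ gives $\iota^*L\cong\omega_{\widetilde{C}}\otimes L^\vee$, and following \cite{Kanev} the tangent space $T_{[L]}V^r_\eta(f)$ is identified with $\mathrm{Im}(v)^\perp$ inside $H^1(\widetilde{C},\OO)^-=\bigl(H^0(\widetilde{C},\omega_{\widetilde{C}})^-\bigr)^\vee$, where
\[ v\colon S^2H^0(\widetilde{C},L)\longrightarrow H^0(\widetilde{C},\omega_{\widetilde{C}})^- \]
is the twisted Prym-Gieseker-Petri map. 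As in Theorem~\ref{trm: twist-dim}, the symplectic nature of the norm form sends the symmetric tensors into the anti-invariant summand while $\wedge^2H^0(\widetilde{C},L)$ maps to $H^0(\widetilde{C},\omega_{\widetilde{C}})^+$; since $\dim S^2H^0(\widetilde{C},L)=\frac{(r+1)(r+2)}{2}$ and $\dim H^0(\widetilde{C},\omega_{\widetilde{C}})^-=g-1$, injectivity of $v$ is equivalent to $\dim T_{[L]}V^r_\eta(f)=g-1-\frac{(r+1)(r+2)}{2}$.

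The main obstacle is precisely this injectivity, which I would establish as a twisted analogue of Welters' Prym-Petri theorem, in the spirit of Proposition~\ref{prop: prym-petri 2-ram}. I would specialize $[C,\eta]$ to a chain $E_1\cup_{p_1}\cdots\cup_{p_{g-1}}E_g$ of elliptic curves with the differences $p_i-p_{i-1}$ non-torsion and $\eta$ a suitable nontrivial $2$-torsion line bundle, so that the étale cover degenerates to a curve assembled from two copies of the chain. On this degenerate cover the injectivity of $v$ reduces, via the elliptic-bridge and limit-linear-series analysis of \cite{Welters}, to computations on the elliptic components, and lower semicontinuity of the rank of $v$ in the family then propagates injectivity to a generic $[C,\eta]\in\cR_g$ and to every $L$ with $\mathrm{Nm}_f(L)=\omega_C\otimes\eta$. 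Alternatively, one can transfer the bound from the already-established ramified case through the clutching map $i\colon\cR_{g-1,2}\to\Delta_0^{\mathrm{ram}}\subseteq\rr_g$, whose source has twisted Prym-Brill-Noether dimension $(g-1)-\frac{(r+1)(r+2)}{2}$ by Theorem~\ref{trm: twist-dim}, exactly matching the target, at the cost of the same limit-linear-series bookkeeping as in Section~\ref{sec: PBN ramified}. Either way one obtains $\dim V^r_\eta(f)\leq g-1-\frac{(r+1)(r+2)}{2}$ at every point with $h^0=r+1$, and applying the same bound to the stratum of minimal $h^0$ shows $V^r_\eta(f)=\emptyset$ once $g-1-\frac{(r+1)(r+2)}{2}<0$.

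For the reverse inequality and non-emptiness I would use the symplectic degeneracy-locus description of Section~\ref{sec: twist}. Writing $V^r_\eta(f)=\{L\in P\mid \dim(\mathcal{U}\cap\mathcal{W})_L\geq r+1\}$ as a Type C locus, \cite[Type C: symplectic bundles, Corollary]{Anderson-Fulton} shows it is empty or of dimension at least $g-1-\frac{(r+1)(r+2)}{2}$, and \cite[Theorem 6.13]{Pragacz-sym} together with Lemma~\ref{lm: total Chern} computes
\[ [V^r_\eta(f)]=\prod_{i=1}^{r+1}\frac{i!}{(2i)!}\,(\theta')^{\frac{(r+1)(r+2)}{2}}, \]
which is nonzero as soon as $\frac{(r+1)(r+2)}{2}\leq g-1$; hence $V^r_\eta(f)\neq\emptyset$ in that range. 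Combining the two bounds yields the asserted dimension $g-1-\frac{(r+1)(r+2)}{2}$. The only real work lies in the Prym-Petri injectivity of the second paragraph, every other step being a direct transcription of the $k=1,2$ arguments and of \cite{DeConciniPragacz}.
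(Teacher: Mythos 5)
Your ``alternative'' route is precisely the paper's proof: the lower bound is quoted from \cite{Kanev} (equivalently, the Type C degeneracy-locus description), and the upper bound is obtained by degenerating a one-parameter family of \'etale covers to a generic point of $\Delta_0^{\mathrm{ram}}\subseteq\rr_g$, where the norm condition forces the limit line bundle to have degree $2g-3$ on $\widetilde{X}$ and degree $1$ on the rational bridge $\widetilde{R}$, so that the limit locus is identified with the twisted Prym--Brill--Noether locus of the normalization in $\cR_{g-1,2}$, of dimension $(g-1)-\frac{(r+1)(r+2)}{2}$ by Theorem \ref{trm: twist-dim}. Note that the actual work here is a degree-distribution computation on the central fiber rather than the full vanishing-sequence bookkeeping of Section \ref{sec: PBN ramified}, so this route is cheaper than you suggest.

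Your primary route, by contrast, has a genuine gap exactly where you locate the difficulty. You correctly observe that in the \'etale case the injectivity of $v\colon S^2H^0(\widetilde{C},L)\to H^0(\widetilde{C},\omega_{\widetilde{C}})^-$ cannot be deduced from Gieseker--Petri generality of $\widetilde{C}$ (which fails, since $\widetilde{C}$ carries the involution), and that a genuine twisted Prym--Petri theorem is needed; but you then only assert that Welters' elliptic-chain degeneration ``reduces to computations on the elliptic components.'' Welters' argument is a delicate induction tailored to the map $\wedge^2H^0(L)\to H^0(\omega_C\otimes\eta)$ for $\mathrm{Nm}(L)=\omega_C$; transporting it to the symmetric square and to the norm condition $\mathrm{Nm}(L)=\omega_C\otimes\eta$ changes the combinatorics of which tensors can lie in the kernel and is a substantial new argument, not a transcription. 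This is presumably why the paper abandons the tangent-space method for $k=0$ and instead bootstraps from the ramified case. If you want to keep the primary route, you must actually carry out that degeneration (or reduce the injectivity of $v$ to the $k=1$ statement of Proposition~\ref{prop: prym-petri 2-ram} by a clutching argument); as written, the central claim is unproven. The sign analysis (symmetric tensors landing in the anti-invariant part because the trivialization of $f^*\eta$ is anti-invariant), the count $\dim H^0(\omega_{\widetilde{C}})^-=g-1$, and the non-emptiness via the nonvanishing of $\prod_{i=1}^{r+1}\frac{i!}{(2i)!}(\theta')^{(r+1)(r+2)/2}$ are all correct.
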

\begin{proof}
	We know from \cite{Kanev} that 
	\[ \dim V^r_\eta(f) \geq g-1-\frac{(r+1)(r+2)}{2}. \]
	For the reverse inequality, we consider a one dimensional family of double covers 
	\[ \widetilde{\mathcal{C}} \rightarrow \mathcal{C} \rightarrow \Delta \]
	with central fiber $f_0\colon \widetilde{C}_0 \rightarrow C_0$ a generic element of $\Delta^{\textrm{ram}}_0$ (i.e. the normalization of this map is an element of $\cR_{g-1,2}$). 
	
    Let $\mathcal{P} \rightarrow \mathcal{C}$ be the $2$-torsion line bundle corresponding to this family and let $\mathcal{L}^* \rightarrow \widetilde{\mathcal{C}}^*$ be a line bundle satisfying $\textrm{Nm}(\mathcal{L^*})\cong \omega\otimes \mathcal{P}$. 
	When we restrict $\mathcal{P}$ to the central fiber, we obtain a line bundle that has degree $-1$ on the genus $g-1$ component $X$ and degree $1$ on the rational component $R$. Moreover, we have 
	\[ \mathcal{P}_{|X}^{\otimes 2} \cong \OO_X(-x-y)  \]
	where $x, y$ are the nodes connecting the components $X$ and $R$ of the central fiber. 
	The line bundle $\omega \otimes \mathcal{P}$ has degree $2g-3$ on $X$ and degree $1$ on $R$. 
	
	Let $\mathcal{L}$ be an extension of $\mathcal{L}^*$ to $\widetilde{\mathcal{C}}$. Then, the norm condition determines the degree distribution on the central fiber. We have 
	\[ \deg\mathcal{L}_{|\widetilde{X}} = 2g-3 \ \textrm{and} \  \deg\mathcal{L}_{|\widetilde{R}} = 1 \]
	
	In particular, the only line bundles appearing as degenerations of twisted Prym linear series over a generic element of $\Delta_0^{\textrm{ram}}$ satisfy 
	\[ \textrm{Nm}(L) = \omega_X\otimes \eta \ \textrm{and} \ h^0(\widetilde{X}, L) \geq r+1 \]
	This locus has dimension equal to $g-1-\frac{(r+1)(r+2)}{2}$, see Theorem \ref{twist-dim}, and hence it follows that for a generic $f\colon \widetilde{C}\rightarrow C$ in $\cR_g$ we have
		\[ \dim V^r_\eta(f) \leq g-1-\frac{(r+1)(r+2)}{2}. \] 
\end{proof}

In fact, the same method can be used to prove the pointed version of this result. The following result is an immediate consequence of Theorem \ref{trm: pointed-twist-dim} and of the proof of Theorem \ref{trm: unramified-twist}.

 \begin{trm}
	Let $f \colon \widetilde{C} \rightarrow C$ be a generic \'etale double cover in $\cR_{g}$ and $\eta$ its associated torsion line bundle. Moreover, let 
	\[ \textbf{a} = (0\leq a_0 < a_1 < \cdots < a_r\leq 2g-2+k)\] 
	be a vanishing sequence and denote $|\textbf{a}| \coloneqq \sum_{i=0}^r a_i$. Then the locus $\mathrm{V}_\eta^{\textbf{a}}(f, p)$ has dimension $g - r- 2 - |\textbf{a}|$.
\end{trm}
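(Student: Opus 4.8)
The plan is to sandwich $\dim \mathrm{V}_\eta^{\textbf{a}}(f,p)$ between two bounds that both equal $g - r - 2 - |\textbf{a}|$: a lower bound from the Lagrangian degeneracy locus description, which remains available in the étale case $k=0$ exactly as in Subsection \ref{sub: Lagrangian}, and an upper bound from reusing the boundary degeneration in the proof of Theorem \ref{trm: unramified-twist}, which reduces the computation to the ramified case in $\cR_{g-1,2}$ already settled by Theorem \ref{trm: pointed-twist-dim}.

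For the lower bound, I would observe that the symplectic set-up of Subsection \ref{sub: Lagrangian} applies verbatim to a generic étale double cover, with $P$ the Prym variety of dimension $g-1$ and $\wedge^2\mathcal{E} \cong q^*\omega_C$ providing the symplectic form. Realizing $\mathrm{V}_\eta^{\textbf{a}}(f,p)$ as the Type C degeneracy locus $\{L \in P : \dim(\mathcal{U}\cap\mathcal{W}_i)_L \geq r+1-i, \ 0 \leq i \leq r\}$ and invoking \cite[Type C: symplectic bundles, Corollary]{Anderson-Fulton}, this locus is either empty or of dimension at least $\dim(P) - (r+1) - |\textbf{a}| = g - r - 2 - |\textbf{a}|$.

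For the upper bound, I would take the one-parameter family $\widetilde{\mathcal{C}} \to \mathcal{C} \to \Delta$ used in Theorem \ref{trm: unramified-twist}, with central fiber $f_0\colon \widetilde{C}_0 \to C_0$ a generic element of $\Delta_0^{\textrm{ram}}$, where $C_0 = X \cup R$ has $X$ of genus $g-1$ and $R$ rational and whose normalization lies in $\cR_{g-1,2}$, letting the marked point $p$ specialize to a generic point of the genus $g-1$ component $\widetilde{X}$ of the source. As there, the norm condition fixes the degree distribution $\deg\mathcal{L}_{|\widetilde{X}} = 2g-3$ and $\deg\mathcal{L}_{|\widetilde{R}} = 1$, and the vanishing condition on the degree-one rational bridge is superfluous; hence every limit of twisted Prym linear series with prescribed vanishing $\textbf{a}$ at $p$ restricts on $\widetilde{X}$ to a point of the pointed twisted Prym-Brill-Noether locus of the cover in $\cR_{g-1,2}$. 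By Theorem \ref{trm: pointed-twist-dim} with $k=1$ and genus $g-1$, that locus has dimension $(g-1) + 1 - r - 2 - |\textbf{a}| = g - r - 2 - |\textbf{a}|$, so upper semicontinuity of fibre dimension gives $\dim \mathrm{V}_\eta^{\textbf{a}}(f,p) \leq g - r - 2 - |\textbf{a}|$ for generic $f$ and simultaneously shows the generic locus is non-empty.

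The hard part will be checking that the prescribed vanishing at $p$ passes correctly to the limit: one must ensure that the limiting line bundle on $\widetilde{X}$ carries exactly the sequence $\textbf{a}$ at the specialized generic point and that no sections are absorbed by the rational bridge $\widetilde{R}$. I expect both to be controlled by the same degree bookkeeping as in Theorem \ref{trm: unramified-twist}, the degree-one component forcing the associated conditions to be automatic, so that the full vanishing data is supported on the genus $g-1$ component where Theorem \ref{trm: pointed-twist-dim} applies. Combining the two inequalities then gives the claimed dimension $g - r - 2 - |\textbf{a}|$.
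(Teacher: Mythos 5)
Your proposal is correct and follows essentially the same route as the paper, which derives this statement by combining the Lagrangian (Type C) degeneracy-locus lower bound with the degeneration to $\Delta_0^{\mathrm{ram}}$ from the proof of Theorem \ref{trm: unramified-twist}, reducing the upper bound to the $k=1$, genus $g-1$ case of Theorem \ref{trm: pointed-twist-dim}. Your dimension bookkeeping $(g-1)+1-r-2-|\textbf{a}| = g-r-2-|\textbf{a}|$ and the observation that the degree-one rational bridge carries no conditions match the paper's argument.
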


We are left to discuss what happens when the expected dimension is negative. In this case, the coupled Gieseker-Petri condition implies that the pointed twisted Prym-Brill-Noether loci are empty. 

 \begin{trm}
	Let $f \colon \widetilde{C} \rightarrow C$ be a generic element of $\cR_{g,2k}$ for $0\leq k \leq 2$. Moreover, let 
	\[ \textbf{a} = (0\leq a_0 < a_1 < \cdots < a_r\leq 2g-2+k)\] 
	be a vanishing sequence, denote $|\textbf{a}| \coloneqq \sum_{i=0}^r a_i$ and assume $g+k-r-2-|\textbf{a}| < 0$. Then the locus $\mathrm{V}_\eta^{\textbf{a}}(f, p)$ is empty.
\end{trm}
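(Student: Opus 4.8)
The plan is to deduce emptiness from the injectivity of the anti-invariant coupled Gieseker--Petri map already exploited in the proof of Theorem \ref{trm: pointed-twist-dim}, after a preliminary reduction to a line bundle attaining its vanishing sequence exactly. The cases $k=1$ and $k=2$ are handled directly, whereas the étale case $k=0$ is reduced to $\cR_{g-1,2}$ by the boundary specialization of Theorem \ref{trm: unramified-twist}.

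Assume first that $k\in\{1,2\}$ and, arguing by contradiction, suppose there is a line bundle $L\in V^{\textbf{a}}_\eta(f,p)$. Such an $L$ need not attain $\textbf{a}$ exactly, so I first replace $\textbf{a}$ by the vanishing sequence genuinely realized at $p$. Writing $\textbf{b}=(b_0<\cdots<b_s)$ for the vanishing sequence of $L$ at $p$, the bound $h^0(\widetilde{C},L(-a_0\,p))\geq r+1$ forces $s\geq r$, and I set $a'_i\coloneqq b_{s-r+i}$ for $0\leq i\leq r$. Since $h^0(\widetilde{C},L(-t\,p))=\#\{j\ :\ b_j\geq t\}$, the hypotheses $h^0(\widetilde{C},L(-a_i\,p))\geq r+1-i$ give $a'_i\geq a_i$, so $|\textbf{a}'|\geq|\textbf{a}|$; moreover $L$ attains $\textbf{a}'=(a'_0<\cdots<a'_r)$ exactly, i.e. $h^0(\widetilde{C},L(-a'_i\,p))=r+1-i$ for all $i$, and $0\leq a'_i\leq 2g-2+k$. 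Thus $L\in V^{\textbf{a}'}_\eta(f,p)$ is a point attaining $\textbf{a}'$ exactly.

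I now run the tangent-space computation of Theorem \ref{trm: pointed-twist-dim} for the pair $(L,\textbf{a}')$. By the coupled Gieseker--Petri generality of $[\widetilde{C},p,\iota(p)]$ proved in Theorem \ref{trm: coupled k = 1} and Theorem \ref{trm: coupled k = 2}, the associated map $\mu$, hence its anti-invariant quotient $\overline{\mu}$, is injective. The target of $\overline{\mu}$ is $H^0(C,\omega_C\otimes\eta)$, which has dimension $g-1+k$, whereas the Riemann--Roch count carried out in that proof identifies the dimension of its source with $|\textbf{a}'|+r+1$. Injectivity therefore forces $|\textbf{a}'|+r+1\leq g-1+k$, that is $g+k-r-2-|\textbf{a}'|\geq 0$, and since $|\textbf{a}'|\geq|\textbf{a}|$ we obtain $g+k-r-2-|\textbf{a}|\geq 0$, contradicting the hypothesis $g+k-r-2-|\textbf{a}|<0$. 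Hence $V^{\textbf{a}}_\eta(f,p)=\emptyset$ for $k\in\{1,2\}$.

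For the étale case $k=0$ the coupled Gieseker--Petri condition is not available for the conjugate pair $\{p,\iota(p)\}$, so I instead specialize as in Theorem \ref{trm: unramified-twist}: degenerate $[f\colon\widetilde{C}\rightarrow C]$ to a generic point of $\Delta_0^{\textrm{ram}}$ and let $p$ specialize to a generic point of the genus $g-1$ component $\widetilde{X}$. The flat limit of the pointed twisted Prym series is then supported on line bundles on $\widetilde{X}$ lying in the pointed twisted locus $V^{\textbf{a}}_\eta$ of the associated cover in $\cR_{g-1,2}$, whose expected dimension $(g-1)+1-r-2-|\textbf{a}|=g-r-2-|\textbf{a}|$ equals that of the original $k=0$ problem; as it is negative, the already-proven case $k=1$ shows this boundary locus is empty, and semicontinuity then yields $V^{\textbf{a}}_\eta(f,p)=\emptyset$ for generic $f\in\cR_g$. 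The main obstacle is precisely this last step: one must verify that the point $p$ and the full vanishing data degenerate compatibly, so that the flat limit really lands in the $\cR_{g-1,2}$ pointed locus with the same sequence $\textbf{a}$ and no sections are lost in the limit; by comparison, the combinatorial reduction to an exactly attained sequence in the paragraph above is routine.
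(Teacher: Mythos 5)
Your argument is correct and follows essentially the same route as the paper: for $k=1,2$ the paper likewise derives a contradiction from the injectivity of the anti-invariant coupled Gieseker--Petri map $\overline{\mu}$ by comparing the source dimension $|\textbf{a}|+r+1$ with the target dimension $g+k-1$, and for $k=0$ it reduces to the ramified case by the same degeneration to $\Delta_0^{\textrm{ram}}$ used in Theorem \ref{trm: unramified-twist}. Your explicit replacement of $\textbf{a}$ by the exactly attained sequence $\textbf{a}'$ is a point the paper handles only implicitly (by working with the locus $\mathrm{V}_\eta^{\textbf{a},\circ}$ of exact vanishing), so this is a welcome clarification rather than a deviation.
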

\begin{proof} We will start with the cases $k=1$ and $k=2$. We assume there is a non-empty space of negative expected dimension
		\[
	\mathrm{V}_\eta^{\textbf{a}, \circ}(f, p) := \left\{ L\in \mathrm{Pic}^{2g-2+k}(\widetilde{C}) \, \left\vert \, 
	\begin{array}{l}
		\mathrm{Nm}(L)= \omega_C\otimes \eta,  \\[0.2cm]
		h^0\text{\large(}\widetilde{C}, L(-a_i\,p)\text{\large)} = r+1-i, \,\, \forall \, 0\leq i \leq r 
	\end{array}
	\right.
	\right\}.
	\]
We consider some sections
\begin{align*}
	&\sigma_i \in H^0\text{\large(} \widetilde{C}, L(-a_i \, p)\text{\large)}\setminus H^0\text{\large(} \widetilde{C}, L(-a_{i+1} \, p)\text{\large)}  \quad\mbox{ for } i=0,\dots, r-1, \\
	&\sigma_r \in H^0\text{\large(} \widetilde{C},  L(-a_r \, p)\text{\large)}.
\end{align*}
as well as the anti-invariant part of the pointed Gieseker-Petri morphism
\[
\overline{\mu}\colon \bigoplus_{i=0}^r  \,\langle \sigma_i \rangle  \otimes H^0\text{\large(}\widetilde{C}, \iota^*L(a_i \, p)\text{\large)} \Big/ \iota^* H^0\text{\large(} \widetilde{C}, L(-a_{i+1} \, p)\text{\large)} \longrightarrow  H^0(C, \omega_C \otimes \eta).
\]
Because $[\widetilde{C},p]$ satisfies the coupled Gieseker-Petri condition, it follows that $\overline{\mu}$ is injective. However, the term on the left has dimension $|\textbf{a}| +r+1$ while the one on the right has dimension $g+k-1$. Under the assumption $g+k-r-2-|\textbf{a}| < 0$, injectivity would be impossible. In particular, this implies the theorem for $k=1$ and $k=2$. 

For the case $k=0$, we reason as in Theorem \ref{trm: unramified-twist} to obtain the conclusion. 
\end{proof}
\section{Prym-Brill-Noether loci twisted by an effective divisor}

In this section, we build on \cite{tarasca-pointed-prym} and Section \ref{sec: twist} in order to extend the main results of \cite{Kanev} about the dimension of Prym-Brill-Noether loci twisted by an effective divisor. 

Let $f\colon \widetilde{C} \rightarrow C$ be a generic ramified double cover in $\cR_{g,2k}$ and let $\eta$ be its associated two torsion line bundle. We consider $D$ a degree $d$ effective divisor on $C$ and define the loci 
\[ V^r(f,D) \coloneqq \left\{ L\in \textrm{Pic}^{2g-2-d}(\widetilde{C}) \ | \ \textrm{Nm}_\pi(L) = \omega_C(-D) \ \textrm{and} \ h^0(L) \geq r+1 \right\} \ \textrm{and} \]
\[ V_\eta^r(f,D) \coloneqq \left\{ L\in \textrm{Pic}^{2g-2-d+k}(\widetilde{C}) \ | \ \textrm{Nm}_\pi(L) = \omega_C\otimes \eta (-D) \ \textrm{and} \ h^0(L) \geq r+1 \right\}. \]

We know from \cite[Proposition 1.6]{Kanev} the following inequalities for the dimension of these spaces: 
\begin{itemize}
	\item \[\dim V^r(f,D) \geq g-1+k -(d+k)(r+1)-\frac{r(r+1)}{2} \ \textrm{and}\]
	\item \[\dim V_\eta^r(f,D) \geq g-1+k- d(r+1) - \frac{(r+1)(r+2)}{2}.\]
\end{itemize} 
if they are nonempty.

Our next goal is to show the converse inequality for small values of $k$. These results will be immediate corollaries of \cite[Theorem 2]{tarasca-pointed-prym} and the theorems of Section \ref{sec: twist}. 

\begin{trm} \label{trm: kanev dim}
	Let $f\colon \widetilde{C}\rightarrow C$ be a generic element of $\cR_{g}$ and let $D \in C_d$ be a generic degree $d$  effective divisor on $C$. Then all components of $V^r(f,D)$ have dimension $g-1-\frac{r(r+1)}{2} - d(r+1)$. 
	
	When the expected dimension is negative, the locus $V^r(f,D)$ is empty. 
\end{trm}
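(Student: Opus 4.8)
The lower bound on the dimension of every nonempty component is already provided by Kanev, see \cite[Proposition 1.6]{Kanev}, so the plan is to establish the matching upper bound together with the emptiness in the range of negative expected dimension. The strategy is to transport $V^r(f,D)$ into (a translate of) the Prym variety, where it becomes a pointed Prym-Brill-Noether locus and the degeneracy-locus machinery of Section \ref{sec: twist} and \cite{tarasca-pointed-prym} applies.

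Concretely, write the generic reduced divisor as $D = p_1 + \cdots + p_d$ and fix a lift $\widetilde{D} = \widetilde{p}_1 + \cdots + \widetilde{p}_d$ on $\widetilde{C}$, choosing one preimage $\widetilde{p}_i$ over each $p_i$. Since $\textrm{Nm}_f(\OO_{\widetilde{C}}(\widetilde{D})) = \OO_C(D)$, tensoring by $\OO_{\widetilde{C}}(\widetilde{D})$ is an isomorphism carrying $V^r(f,D)$ onto the locus $\{M \in \textrm{Pic}^{2g-2}(\widetilde{C}) : \textrm{Nm}_f(M) = \omega_C, \ h^0(\widetilde{C}, M(-\widetilde{D})) \geq r+1\}$ inside the component $P^\pm$ of the Prym. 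This realizes $V^r(f,D)$ as a Prym-Brill-Noether locus with a vanishing condition imposed at the $d$ general points $\widetilde{p}_i$.

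I would then argue exactly as in the proofs of Theorem \ref{trm: twist-dim} and Theorem \ref{trm: pointed-twist-dim}. Realizing this locus as a type-C (Lagrangian) degeneracy locus, with the isotropic subbundle $\mathcal{W}$ twisted along $\widetilde{D}$, reproduces Kanev's lower bound and, via \cite{Anderson-Fulton} and \cite{tarasca-pointed-prym}, yields its class in $N^*(P^\pm,\mathbb{C})$. For the upper bound I would compute the tangent space at a line bundle $M$ with $h^0(M(-\widetilde{D})) = r+1$: it is the annihilator in $H^0(\widetilde{C},\omega_{\widetilde{C}})^- = H^0(C,\omega_C\otimes\eta)$ of the image of an anti-invariant pointed Prym-Gieseker-Petri map whose source has dimension equal to the expected codimension $\frac{r(r+1)}{2} + d(r+1)$. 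Injectivity of this map gives $\dim \leq g-1-\frac{r(r+1)}{2}-d(r+1)$, hence equality; and when this number is negative the source outstrips the $(g-1)$-dimensional target, so the same injectivity is impossible at any point and the locus must be empty.

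The main obstacle is precisely this injectivity for $d$ general points, namely the multi-pointed analog for $\widetilde{C}$ of the coupled Gieseker-Petri condition established for two points in Section \ref{sec: BN-GP}. For $f$ generic in $\cR_g$ and $D$ generic I would obtain it by degeneration to the boundary divisor $\Delta_0^{\textrm{ram}}$, exactly as in the proof of Theorem \ref{trm: unramified-twist}: specializing $C$ to $X\cup R$ with $X$ of genus $g-1$ and letting $D$ degenerate to a general divisor on $X$ reduces the statement, through the degree analysis on the central fibre, to the pointed twisted estimate on $\cR_{g-1,2}$ from Section \ref{sec: twist} together with the pointed Prym-Gieseker-Petri injectivity of \cite{tarasca-pointed-prym}. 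Since the relevant Gieseker-Petri conditions are open, producing a single such cover suffices, and the theorem then follows as an immediate corollary of \cite[Theorem 2]{tarasca-pointed-prym} and the theorems of Section \ref{sec: twist}.
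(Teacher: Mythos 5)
Your opening moves match the paper's in spirit: Kanev's bound handles one inequality, and twisting into the Prym variety turns $V^r(f,D)$ into a Prym--Brill--Noether locus with vanishing conditions. But the upper bound in your argument hinges entirely on the injectivity of a \emph{multi-pointed} Prym--Gieseker--Petri map at $d$ distinct general points of $\widetilde{C}$, and this is exactly what you do not establish. The degeneration you propose to $\Delta_0^{\textrm{ram}}$ (specializing $C$ to $X\cup R$ with $X$ of genus $g-1$) reduces the genus by one but leaves the divisor $D$ as $d$ general points on $X$; the results you want to invoke at the central fibre --- \cite[Theorem 2]{tarasca-pointed-prym} and the theorems of Section \ref{sec: twist} --- are all stated for vanishing at a \emph{single} marked point (or at two points in a common fibre, via the coupled Gieseker--Petri condition of Section \ref{sec: BN-GP}). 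Iterating the degeneration never collapses the $d$ points into a configuration covered by those results, so the claimed injectivity, and with it both the upper bound and the emptiness statement, is left unproved.

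The paper closes this gap with a specialization you have the freedom to make but did not: since the statement only requires exhibiting one pair $[f,D]$ over which every component of the fibre of the universal locus $\mathcal{V}^{r,d}\rightarrow \mathcal{C}^d\mathcal{R}_g$ has the expected dimension, it takes $D = d\cdot p$ for a single general point $p$. Twisting by $d\widetilde{p}$ then sends $V^r(f, d\cdot p)$ injectively into the one-pointed loci $V^{\textbf{a}}(f,\widetilde{p})$ with $\textbf{a}=(d,\ldots,d+r)$ or $(0,d,\ldots,d+r)$, whose dimensions are exactly $g-1-\frac{r(r+1)}{2}-d(r+1)$ by Tarasca, and components with excess sections are excluded because they would inject into strictly smaller loci $V^{\textbf{a}}(f,\widetilde{p})$ with $\textbf{a}\geq(0,1,\ldots,s-1,d,\ldots,d+r)$. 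If you want to salvage your version with $D$ reduced, you must either prove the $d$-point coupled Gieseker--Petri statement (which is genuinely more than the paper provides) or, more economically, let the $d$ points of $D$ collide at a single general point as the paper does.
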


\begin{proof} We know from \cite{Kanev} that all components have dimension greater or equal to this. It is sufficient to show the converse inequality. 

We consider the universal Prym-Brill-Noether locus 
\[\mathcal{V}^{r,d}\coloneqq \left\{[f,D,L] \ | \ f \in \cR_g, \ D \in C^d \ \textrm{and} \ L \in V^r(f,D) \right\} \]
and consider the forgetful map to $\mathcal{C}^d\mathcal{R}_g \coloneqq \overline{\mathcal{M}}_{g,d}\times_{\overline{\mathcal{M}_g}}\overline{\mathcal{R}}_g$. Over an element  $[f, D]  \in \mathcal{C}^d\mathcal{R}_g$, the fiber of the forgetful map is the locus $V^r(f, D)$. In particular, it is sufficient to find a unique pair $[f,D]$ such that all the components of $V^r(f, D)$ have the expected dimension. It will then follow that the generic fiber of 
	\[\mathcal{V}^{r,d} \rightarrow \mathcal{C}^d\mathcal{R}_g\]
	has the expected dimension $g-1-\frac{r(r+1)}{2} - d(r+1)$. 
	
	Let $p$ be a generic point on $C$ and let $\widetilde{p}$ be one of the points in the preimage $f^{-1}(p)$. We will show that for $D = d\cdot p$, the fiber $V^r(f, d\cdot p)$ has dimension $g-1-\frac{r(r+1)}{2} - d(r+1)$ as required. 
	
	We consider the map 
	\[ V^r(f, d\cdot p) \xrightarrow{+d\cdot\widetilde{p}} V^r(C,\eta) \amalg V^{r+1}(C,\eta) \]
	There are two obvious loci in the image, namely 
	\begin{enumerate}
		\item The locus $V^\textbf{a}(f,\widetilde{p})$ where $ \textbf{a} = (d, d+1,\ldots, d+r)$ is the vanishing sequence at $\widetilde{p}$; when the generic element in the image has exactly $r+1$ independent global sections, or 
		\item The locus $V^\textbf{a}(f,\widetilde{p})$ where $ \textbf{a} = (0, d, d+1,\ldots, d+r)$; when the generic element in the image has exactly $r+2$ independent global sections.
	\end{enumerate}
	According to \cite[Theorem 2]{tarasca-pointed-prym}, both loci have dimension $g-1-\frac{r(r+1)}{2} - d(r+1)$. 
	
	We show these are the only loci in the image. Assume there exists a component of $V^r(f, d\cdot p)$ such that a generic element $L$ satisfies 
	\[h^0\text{\large(}\widetilde{C}, L(d\cdot \widetilde{p})\text{\large)} = r+1+s \geq r+3.\]
	
	Because $h^0(\widetilde{C}, L) \geq r+1$, the vanishing orders of $L(d\widetilde{p})$ at $\widetilde{p}$ are $\geq (0,1,\ldots, s-1, d, d+1, \ldots, d+r)$.
	According to \cite[Theorem 2]{tarasca-pointed-prym}, the locus respecting these properties has dimension \[g-1-\frac{r(r+1)}{2} - d(r+1) - \frac{s(s-1)}{2}.\] 
	This is not possible as the map $+d\cdot \widetilde{p}$ is injective. 
	
	In conclusion, all components of the space $V^r(f, d\cdot p)$ have dimension 
	\[ g-1-\frac{r(r+1)}{2}-d(r+1).\]
\end{proof}

The same method as in Theorem \ref{trm: kanev dim} can be used to estimate the dimension of twisted Prym-Brill-Noether loci. Using the results of the previous section, we obtain: 

\begin{trm} \label{trm: kanev twist dim}
	Let $f\colon \widetilde{C}\rightarrow C$ be a generic element of $\cR_{g,2k}$ for $0\leq k\leq 2$ and let $D \in C_d$ be a generic degree $d$  effective divisor on $C$. Then the locus $V_\eta^r(f,D)$ has dimension $g-1 + k-\frac{(r+1)(r+2)}{2} - d(r+1)$. When the expected dimension is negative, the locus $V_\eta^r(f,D)$ is empty. 
\end{trm}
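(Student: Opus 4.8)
The plan is to follow the strategy of Theorem \ref{trm: kanev dim}, substituting the twisted pointed loci of Theorem \ref{trm: pointed-twist-dim} for the untwisted loci of Tarasca. Since \cite[Proposition 1.6]{Kanev} already supplies the lower bound $\dim V_\eta^r(f,D)\geq g-1+k-d(r+1)-\frac{(r+1)(r+2)}{2}$ whenever the locus is nonempty, the entire task reduces to proving the matching upper bound, together with emptiness when this number is negative.

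I would first record the reduction to a single example. Consider the universal twisted Prym--Brill--Noether locus $\mathcal{V}_\eta^{r,d}$ together with its proper forgetful map to the irreducible base $\mathcal{C}^d\mathcal{R}_{g,2k}\coloneqq \overline{\mathcal{M}}_{g,d}\times_{\overline{\mathcal{M}}_g}\overline{\mathcal{R}}_{g,2k}$, whose fiber over $[f,D]$ is exactly $V_\eta^r(f,D)$. The generic point of this base is a generic pair $(f,D)$ with $D\in C_d$ generic. By upper semicontinuity of fiber dimension it suffices to exhibit one pair $[f_0,D_0]$ whose fiber has dimension at most $g-1+k-d(r+1)-\frac{(r+1)(r+2)}{2}$ (respectively, is empty): the generic fiber then has dimension at most this value, and Kanev's inequality forces equality. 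The point is that the special divisor $D_0$ need not be generic in $C_d$, which is exactly why the semicontinuity step is required.

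The crux is the choice $D_0=d\cdot p$ with $p\in C$ generic and $\widetilde p\in\widetilde C$ a preimage of $p$; since $p$ is generic it is not a branch point, so $\widetilde p$ is an unramified point and $\mathrm{Nm}_f(\mathcal{O}(d\widetilde p))=\mathcal{O}(dp)$. Consequently the translation $L\mapsto L(d\widetilde p)$ carries $V_\eta^r(f,d\cdot p)$ isomorphically onto the twisted pointed locus $\mathrm{V}_\eta^{\textbf{a}}(f,\widetilde p)$ with $\textbf{a}=(d,d+1,\dots,d+r)$: the norm condition transforms correctly, and the single inequality $h^0(M(-d\widetilde p))\geq r+1$ defining the image automatically propagates to $h^0(M(-(d+i)\widetilde p))\geq r+1-i$ for all $i$ through the vanishing filtration at $\widetilde p$. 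Applying Theorem \ref{trm: pointed-twist-dim} (and its $k=0$ analogue) then yields $\dim \mathrm{V}_\eta^{\textbf{a}}(f,\widetilde p)=g+k-r-2-|\textbf{a}|$ with $|\textbf{a}|=(r+1)d+\tfrac{r(r+1)}{2}$, a quantity one checks to equal $g-1+k-d(r+1)-\frac{(r+1)(r+2)}{2}$ exactly; the emptiness theorem of Section \ref{sec: twist} disposes of the negative range. To invoke Theorem \ref{trm: pointed-twist-dim} I only need $[\widetilde C,\widetilde p]$ to satisfy the coupled Gieseker--Petri condition, which holds because $\widetilde p$ is a generic point of $\widetilde C$ and this is guaranteed by Theorem \ref{trm: coupled k = 1} and Theorem \ref{trm: coupled k = 2}.

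I expect the only genuine obstacle to be organizational rather than conceptual: one must verify that $\mathcal{C}^d\mathcal{R}_{g,2k}$ is irreducible and that the forgetful map is proper, so that the dimension and emptiness bounds established at the degenerate divisor $d\cdot p$ transfer to a generic $D\in C_d$. A pleasant feature compared with the untwisted Theorem \ref{trm: kanev dim} is that no parity condition intervenes: the image under $L\mapsto L(d\widetilde p)$ is a \emph{single} twisted pointed locus rather than a disjoint union of two, so the two-case analysis of the untwisted argument collapses and the identification above is a clean isomorphism of the special fiber with a locus of known dimension.
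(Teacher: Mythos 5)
Your proposal is correct and follows exactly the route the paper intends: the paper gives no separate argument for Theorem \ref{trm: kanev twist dim} beyond saying it is obtained by the method of Theorem \ref{trm: kanev dim} combined with the results of Section \ref{sec: twist}, which is precisely what you carry out (reduction to $D_0=d\cdot p$ by semicontinuity, translation by $d\widetilde{p}$ onto the pointed twisted locus $\mathrm{V}_\eta^{\textbf{a}}(f,\widetilde{p})$ with $\textbf{a}=(d,\dots,d+r)$, and Theorem \ref{trm: pointed-twist-dim} for the dimension and emptiness). Your observation that the two-case parity analysis of the untwisted proof collapses here, since the twisted Prym variety $P$ carries no mod-$2$ decomposition of $h^0$, is accurate and matches the numerology of the stated dimension.
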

\section{Prym-Brill-Noether and pointed Brill-Noether conditions}

We consider the universal Prym-Brill-Noether locus $\mathcal{V}^r_g$ defined as 
\[ \mathcal{V}^r_g \coloneqq \left\{[C,\eta, L] \ | \ [C,\eta] \in \mathcal{R}_g \ \textrm{and} \ L\in V^r(C,\eta) \right\}. \]
This locus fits into a diagram 
\[
\begin{tikzcd}
	\mathcal{V}^r_g  \arrow{r}{} \arrow[swap]{d}{} & ? \arrow{d}{} \\
	\cR_g \arrow{r}{i}&  \rr_g
\end{tikzcd}
\]
and our goal is to construct a partial compactification of it over $\rr_g$. To do this, we first remark that a generic element of $\mathcal{V}^r_g$ can be viewed as a linear series $g^r_{2g-2}$ satisfying the norm condition. As in \cite{BudPrymIrr}, we will call these Prym limit $g^r_{2g-2}$. 

Let $\widetilde{\cR}_g$ be the partial compactification of $\cR_g$ consisting of double covers $f\colon \widetilde{C} \rightarrow C$ whose source curve is of compact type. We consider the map 
\[ \chi_g\colon \widetilde{\cR}_g \rightarrow \cM^{\textrm{ct}}_{2g-1},\]
along with the moduli space $\mathcal{G}^r_{2g-2}(\cM^{\textrm{ct}}_{2g-1})$ parametrizing crude limit $g^r_{2g-2}$ on curves in $\cM^{\textrm{ct}}_{2g-1}$.

In this setting, we have a diagram
\[
\begin{tikzcd}
	\mathcal{V}^r_g  \arrow{r}{} \arrow[swap]{d}{} & \widetilde{\cR}_g\times_{\cM^{\textrm{ct}}_{2g-1}}\mathcal{G}^r_{2g-2}(\cM^{\textrm{ct}}_{2g-1}) \arrow{d}{} \\
	\cR_g \arrow{r}{i}&  \widetilde{\cR}_g
\end{tikzcd}
\]
and our goal is to understand the closure of $\mathcal{V}^r_g $ in $\widetilde{\cR}_g\times_{\cM^{\textrm{ct}}_{2g-1}}\mathcal{G}^r_{2g-2}(\cM^{\textrm{ct}}_{2g-1}) $. This closure is contained in the locus of crude limit linear series respecting the norm condition, which we will call Prym limit $g^r_{2g-2}$'s. 

Let $[f\colon \widetilde{C}\rightarrow C] \in \rr_g$ such that $C$ is a curve of compact type admitting a unique irreducible component $X$ for which $\eta_X \ncong \OO_X$. For this component $X$, we denote by $p^X_1,\ldots, p^X_{s_X}$ its nodes and by $g^X_1,\ldots, g^X_{s_X}$ the genera of the connected components of $C\setminus X$ glued to $X$ at these points.  For an irreducible component $Y$ of $C$, different from $X$, we denote by $q^Y$ the node glueing $Y$ to the connected component of $C\setminus Y$ containing $X$, and by $p^Y_1,\ldots, p_{s_Y}^Y$ the other nodes of $Y$. We denote by $g^Y_0, g^Y_1,\ldots, g^Y_{s_Y}$ the genera of the connected components of $C\setminus Y$ glued to $Y$ at these points. Using the above notations, we can define the concept of a Prym limit $g^r_{2g-2}$: 
\begin{defi} \label{def: Prym-limit} \cite{BudPrymIrr}
	Let $[f\colon \widetilde{C}\rightarrow C] \in \rr_g$ as above. A Prym limit $g^r_{2g-2}$ on $f$, denoted $\ell$, is a crude limit $g^r_{2g-2}$ on $\widetilde{C}$ satisfying the following two conditions: 
	\begin{enumerate}
		\item For the unique component $\widetilde{X}$ of $\widetilde{C}$ above $X$, the $\widetilde{X}$-aspect $L_{\widetilde{X}}$ of $\ell$ satisfies
		\[Nm_{f_{|\widetilde{X}}} L_{\widetilde{X}} \cong \omega_X(\sum_{i=1}^s2g^X_ip_i)\]
		\item For a component $Y$ of $C$ different from $X$, we denote by $Y_1$ and $Y_2$ the two irreducible components of $\widetilde{C}$ above it. We identify these two components with $Y$ via the map $f$. With this identification the $Y_1$ and $Y_2$ aspects of $\ell$ satisfy: 
		\[ L_{Y_1}\otimes L_{Y_2} \cong \omega_Y\text{\large(}(2g-2+2g^Y_0)q^Y + \sum_{i=1}^sg_i^Yp_i^Y\text{\large)} \]
	\end{enumerate} 
	 We denote by $PG^r_{2g-2}(f)$ the space of Prym limit $g^r_{2g-2}$ on $f$. Furthermore, we denote by $V^r(f) \subseteq PG^r_{2g-2}(f)$ to be the set of limit linear series appearing in the closure of $\mathcal{V}^r_g$ above $[f] \in \widetilde{\cR}_g$.  
\end{defi}

Let $f\colon Y_1\cup_{x_1}\widetilde{E}\cup_{x_2}Y_2\rightarrow Y\cup_xE$ be the double cover corresponding to a generic element in the boundary component $\Delta_1\subseteq \rr_g$. We consider the map 
\[ V^r(f) \rightarrow \textrm{Pic}^{2g-2}(Y_1)\times \textrm{Pic}^{2g-2}(\widetilde{E})\times \textrm{Pic}^{2g-2}(Y_2)\]
forgetting the $(r+1)$-dimensional spaces of sections and only remembering the underlying line bundles. We will show that the image of this map has dimension $g-1-\frac{r(r+1)}{2}$. 

\begin{prop} \label{Prym limits} Let  $f\colon Y_1\cup_{x_1}\widetilde{E}\cup_{x_2}Y_2\rightarrow Y\cup_xE$ be a generic double cover in the boundary component $\Delta_1\subseteq \rr_g$, and let $\left\{l_1 \coloneqq (V_1,L_1), (V_{\widetilde{E}}, L_{\widetilde{E}}), l_2\coloneqq (V_2,L_2) \right\}$ be a generic Prym limit $g^r_{2g-2}$ in $V^r(f)$. Then we have that 
	\begin{itemize}
		\item The vanishing orders of the $Y_1$-aspect at $x_1$ are $(g-r-1, g-r+1, \ldots, g+r-1)$,
		\item The vanishing orders of the $Y_2$-aspect at $x_2$ are $(g-r-1, g-r+1, \ldots, g+r-1)$ and 
		\item The vanishing orders of the $\widetilde{E}$-aspect at $x_1$ and $x_2$ are both $(g-r-1, g-r+1, \ldots, g+r-1)$.
	\end{itemize}
\end{prop}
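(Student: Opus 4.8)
The plan is to reduce the computation of all four vanishing sequences to a single sequence $\textbf{a}$ attached to the genus-$(g-1)$ aspects, and then to pin $\textbf{a}$ down by combining the norm relations of Definition \ref{def: Prym-limit} with Serre duality on $Y$. First I would record the geometry of the central fibre. Since $[f]\in\Delta_1$, the base is $C=Y\cup_x E$ with $Y$ a general curve of genus $g-1$ and $E$ a general elliptic curve, with $\eta$ trivial on $Y$ and nontrivial on $E$; hence $\widetilde{E}\to E$ is the connected \'etale double cover (again of genus $1$, with free involution $\iota$ exchanging the two points $x_1,x_2$ over $x$), while $Y$ splits into two copies $Y_1,Y_2$. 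The source $\widetilde{C}=Y_1\cup_{x_1}\widetilde{E}\cup_{x_2}Y_2$ has arithmetic genus $2g-1$ and every aspect is a $g^r_{2g-2}$. Unwinding Definition \ref{def: Prym-limit} gives $\mathrm{Nm}(L_{\widetilde{E}})\cong\OO_E(2(g-1)x)$, equivalently $L_{\widetilde{E}}\otimes\iota^*L_{\widetilde{E}}\cong\OO_{\widetilde{E}}(2(g-1)(x_1+x_2))$, and $L_{Y_1}\otimes L_{Y_2}\cong\omega_Y(2g\,x)$ after identifying $Y_1,Y_2$ with $Y$ via $f$.

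Next I would reduce to one sequence. A generic element of $V^r(f)$ is a refined limit $g^r_{2g-2}$, so at each node the vanishing sequences of the two adjacent aspects are complementary, summing to $2g-2$. Writing $\textbf{a}=(a_0<\cdots<a_r)$ for the vanishing sequence of the $Y_1$-aspect at $x_1$, complementarity determines the $\widetilde{E}$-aspect sequence at $x_1$, and similarly at $x_2$; moreover $\iota$, which exchanges $Y_1\leftrightarrow Y_2$ and $x_1\leftrightarrow x_2$ and is compatible with the self-duality $L\mapsto\iota^*L$ preserving $V^r(f)$, forces the $Y_2$-aspect at $x_2$ to carry the same sequence $\textbf{a}$. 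Thus it suffices to prove $\textbf{a}=(g-r-1,g-r+1,\ldots,g+r-1)$; the three bulleted assertions then follow at once, since $2g-2-a_{r-i}=g-r-1+2i$ reproduces the same sequence for the $\widetilde{E}$-aspect.

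The heart of the argument is a self-dual pointed Brill--Noether reformulation. Forgetting sections, $L\mapsto L_{Y_1}$ sends $V^r(f)$ into $\mathrm{Pic}^{2g-2}(Y)$; because $L_{Y_2}$ is recovered from $L_{Y_1}$ via the norm relation and $L_{\widetilde{E}}$ ranges over the finite Prym of $\widetilde{E}/E$, this map is generically finite, so its image has dimension $g-1-\tfrac{r(r+1)}{2}$, the dimension of $V^r(f)$ by \cite{Welters} and \cite{Bertram}. That image lies in the locus of $L$ satisfying simultaneously $h^0(Y,L(-a_i x))\ge r+1-i$ and $h^0(Y,\omega_Y(2gx)\otimes L^{-1}(-a_i x))\ge r+1-i$ for all $i$. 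Applying Serre duality and Riemann--Roch on $Y$ rewrites the second family as vanishing conditions on $L$ at $x$: the $\omega_Y(2gx)\otimes L^{-1}$-condition indexed by $j$ becomes $h^0(Y,L(-(2g-a_j)x))\ge a_j-(g-1)+r-j$. The key computation is that for the candidate sequence $a_j=g-r-1+2j$ these dualized conditions coincide term by term with the first family (with the $j=0$ one becoming vacuous), so the joint locus equals the pointed Brill--Noether locus $W^r_{2g-2}(Y,x;\textbf{a})$, which for general $(Y,x)$ has expected dimension $g-1-\tfrac{r(r+1)}{2}$. For any other self-complementary sequence the dualized conditions instead strictly enlarge the prescribed vanishing at $x$, so the joint locus drops below $g-1-\tfrac{r(r+1)}{2}$; matching dimensions then forces $\textbf{a}=(g-r-1,\ldots,g+r-1)$.

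The main obstacle is precisely this last dimension comparison. One must verify that the Serre-dual conditions align \emph{exactly} for the step-two sequence while strictly increasing the total ramification weight at $x$ for every competing sequence, and that $(Y,x)$ is general enough for all the relevant pointed Brill--Noether loci to have expected dimension --- which holds since $Y$ is general of genus $g-1$ and $x$ is a general point, by the genericity in the spirit of Theorem \ref{BN-GP general}. I also expect to need the observation, traceable to the elliptic bridge $\widetilde{E}$ and the relation $L_{\widetilde{E}}\otimes\iota^*L_{\widetilde{E}}\cong\OO_{\widetilde{E}}(2(g-1)(x_1+x_2))$, that the generic vanishing sequence is self-complementary ($a_i+a_{r-i}=2g-2$); establishing this self-duality together with the strict dimension drop for non-extremal sequences is where the genuinely Prym-theoretic input enters, and is the step I expect to require the most care.
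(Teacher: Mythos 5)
Your Serre duality/Riemann--Roch computation matching the conditions imposed by $L_{Y_1}$ with those imposed by $L_{Y_2}\cong\omega_Y(2g\cdot x)\otimes L_{Y_1}^{-1}$ is correct, and it is essentially the same mechanism the paper uses to pin down $a_0$ and $a_r$. However, two of the structural inputs your plan rests on are respectively wrong and left open, and they are exactly where the content of the proposition lies. The reduction to a single sequence via ``$\iota$-symmetry'' does not work: the involution sends a limit linear series $\ell$ to the \emph{different} series $\iota^*\ell$, and the Prym condition is the relation $L_{Y_1}\otimes L_{Y_2}\cong\omega_Y(2g\cdot x)$ between the two aspects, not an identification of $\ell$ with $\iota^*\ell$. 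For a fixed generic $\ell$ you therefore cannot conclude that the $Y_2$-aspect at $x_2$ has the same vanishing sequence as the $Y_1$-aspect at $x_1$; in the paper the equality $a_i=b_i$ is the \emph{last} step, obtained only after each sequence is separately shown to be the extremal one.

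Second, you explicitly defer the two facts that actually drive the argument: the complementarity $a_i+b_{r-i}=2g-2$ and the gap condition $a_{i+1}\geq a_i+2$. The paper obtains the first from Brill--Noether additivity, $\rho(2g-1,r,2g-2)=-r+2s\geq\rho(l_1,x_1)+\rho(\widetilde{E},x_1,x_2)+\rho(l_2,x_2)\geq -r+2s$, using $\rho(\widetilde{E},x_1,x_2)\geq -r$ for linear series on elliptic curves and $\rho(l_i,x_i)\geq s$ from the dimension bound; the forced equalities give $\rho(\widetilde{E},x_1,x_2)=-r$ and hence the complementarity. It obtains the second from the relation $2x_1\equiv 2x_2$ together with $x_1\not\equiv x_2$ on the \'etale cover $\widetilde{E}$, which forces all $a_i$ to have the same parity. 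Without these, your strategy must rule out \emph{every} competing pair of sequences $(\textbf{a},\textbf{b})$ --- including pairs with consecutive entries and pairs with $\textbf{a}\neq\textbf{b}$ --- by a dimension count on pointed Brill--Noether loci of $(Y,x)$; you acknowledge this is the delicate step, but it is not carried out, and it is not routine (a pure weight count does not by itself exclude all such pairs). As written, the proposal is therefore incomplete at precisely the points where the Prym-theoretic input enters.
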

\begin{proof} We first interpret the norm conditions that this limit linear series satisfy.
\begin{enumerate} \item When we identify $Y_1, Y_2$ with $Y$ we get the isomorphism
		\[ L_1\otimes L_2 \cong \omega_Y\text{\large(}2g\cdot x\text{\large)} \]
		\item For the $\widetilde{E}$-aspect we have 
		\[ \textrm{Nm}(L_{\widetilde{E}}) = \omega_E\text{\large(}(2g-2)\cdot x\text{\large)}. \]
\end{enumerate}

In particular knowing $L_1$ or $L_2$ uniquely determines the other. Second, $L_{\widetilde{E}}$ is isomorphic to either $\OO_{\widetilde{E}}\text{\large(}(2g-2)\cdot x_1\text{\large)}$ or $\OO_{\widetilde{E}}\text{\large(}(2g-3)\cdot x_1 + x_2\text{\large)}$.  

The generic fiber of the morphism $\mathcal{V}^r_g\rightarrow \cR_g$ has dimension $g-1-\frac{r(r+1)}{2}$. This implies that the dimension of (all components of) $V^r(f)$ is greater or equal to  $s\coloneqq g-1-\frac{r(r+1)}{2}$.


We look at the image of the map 
\[ V^r(f) \rightarrow \textrm{Pic}^{2g-2}(Y_1)\times \textrm{Pic}^{2g-2}(\widetilde{E})\times \textrm{Pic}^{2g-2}(Y_2).\]
Because the image is at least $s$-dimensional, it implies that 
\[ \rho(l_1, x_1) \geq s \ \textrm{and} \  \rho(l_2, x_2) \geq s. \]
Using Brill-Noether additivity, see \cite[Proposition 4.6]{limitlinearbasic} and the inequality $\rho(\widetilde{E}, x_1,x_2)\geq -r$ which is always satisfied for linear series on elliptic curves, see \cite[Proposition 1.4.1]{FarkasThesis} we obtain the inequalities 
\[ \rho(2g-1,r,2g-2) = -r + 2s \geq \rho(l_1,x_1) + \rho(\widetilde{E}, x_1,x_2) + \rho(l_2,x_2) \geq -r +2s. \]
This implies that 
\[ \rho(l_1, x_1) = s, \   \rho(l_2, x_2) = s \ \textrm{and} \ \rho(\widetilde{E}, x_1,x_2) = -r.  \]
We denote by $0\leq a_0 < a_1<\cdots <a_r \leq 2g-2$ and $0\leq b_0 < b_1<\cdots <b_r \leq 2g-2$ the vanishing orders at $x_1, x_2$ for the $Y_1$ and $Y_2$ aspects respectively. The equality $\rho(\widetilde{E}, x_1,x_2) = -r$ implies that $a_i + b_{r-i} = 2g-2$ for all $0\leq i\leq r$.  
Reasoning as in \cite{BudPrymIrr} and \cite{BudPBN} we obtain that: 
\begin{itemize}
	\item All the $a_i$'s have the same parity: otherwise using the equivalence of divisors $2x_1 \equiv 2x_2$ on $\widetilde{E}$ we obtain the contradiction $x_1 \equiv x_2$. In particular, this implies $a_i \geq a_{i-1}+2$ for each $1\leq i \leq r$. 
	\item $a_0 = g-r-1$ and $a_r = g+r-1$:
	
	The genericity of $[Y_2,x_2] \in \cM_{g-1,1}$ together with $ \rho(L_2,x_2) = s$ imply that 
	$$h^0(Y_2, L_2(-b_ix_2)) = r+1-i \ \textrm{for all} \ 0\leq i \leq r.$$ 
	 Using that $L_1\otimes L_2 \cong \omega_Y(2g\cdot x)$ and the Riemann-Roch theorem we obtain 
	\[ h^0\text{\large(}Y_1, L_1(-(2+a_{r-i})q)\text{\large)} = g+r-1-a_{r-i}-i\]  
	Choosing $i=0$ we get $a_r = g+r-1$.  Inverting the roles of the $a_i$'s and $b_i$'s we obtain that $a_0 = g-r-1$.
\end{itemize}
These two conditions immediately imply $a_i = g-r-1+2i$ for each $0\leq i \leq r$. Inverting the roles of the $a_i$'s and the $b_i$'s we conclude
\[ a_i = b_i = g-r-1 + 2i \ \textrm{for every} \ 0\leq i \leq r. \]
\end{proof}

In particular, we recover the dimension count for $V^r(f)$ for a generic $f\colon \widetilde{C} \rightarrow C$ in $\cR_{g}$. 
\begin{cor} \label{cor: dimPBN}
 Let $f\colon \widetilde{C} \rightarrow C$ be a generic double cover in  $\cR_{g}$. Then we have 
 \[ \dim V^r(f) = g-1-\frac{r(r+1)}{2}.\]
 Moreover, $V^r(f)$ is empty when $g-1-\frac{r(r+1)}{2} < 0$. 
\end{cor}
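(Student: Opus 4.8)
The plan is to combine the lower bound coming from the Lagrangian degeneracy locus description of Subsection \ref{sub: Lagrangian} with an upper bound extracted from the boundary analysis of Proposition \ref{Prym limits}. For a generic $f$ in $\cR_g$, writing $V^r(f)$ as the locus where the two maximal isotropic subbundles $\mathcal{U},\mathcal{W}$ meet in dimension $\geq r+1$ shows that every nonempty component has dimension at least the expected value $s \coloneqq g-1-\frac{r(r+1)}{2}$. It therefore suffices to produce the reverse inequality, and this I would obtain by degenerating $f$ to a generic double cover $f_0\colon Y_1\cup_{x_1}\widetilde{E}\cup_{x_2}Y_2 \to Y\cup_x E$ in $\Delta_1\subseteq\rr_g$ and bounding the dimension of the limit locus $V^r(f_0)\subseteq PG^r_{2g-2}(f_0)$ described in Definition \ref{def: Prym-limit}.

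First I would analyze the forgetful map
\[ V^r(f_0) \rightarrow \textrm{Pic}^{2g-2}(Y_1)\times\textrm{Pic}^{2g-2}(\widetilde{E})\times\textrm{Pic}^{2g-2}(Y_2). \]
By Proposition \ref{Prym limits}, for a generic Prym limit $g^r_{2g-2}$ the $Y_1$-aspect $L_1$ has vanishing sequence $(g-r-1, g-r+1, \ldots, g+r-1)$ at $x_1$; the norm relation $L_1\otimes L_2\cong\omega_Y(2g\cdot x)$ then determines $L_2$ from $L_1$, while $L_{\widetilde{E}}$ is confined to the two bundles $\OO_{\widetilde{E}}((2g-2)\,x_1)$ and $\OO_{\widetilde{E}}((2g-3)\,x_1+x_2)$. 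Hence the image of the forgetful map is governed by the pointed Brill-Noether locus on $Y_1$ of line bundles of degree $2g-2$ with the prescribed ramification at $x_1$. Since $[Y_1,x_1]$ is a general pointed curve of genus $g-1$, it is pointed Brill-Noether general, and the adjusted Brill-Noether number of this ramification condition equals $(g-1)-\frac{r(r+1)}{2}=s$; consequently the image has dimension $s$. The fibers of the forgetful map are finite: for a generic point of the image the section spaces $V_1=H^0(Y_1,L_1)$ and $V_2=H^0(Y_2,L_2)$ are the full spaces of global sections, and $V_{\widetilde{E}}$ is the unique subspace of $H^0(\widetilde{E},L_{\widetilde{E}})$ cut out by the prescribed vanishing at $x_1$ and $x_2$. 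Therefore $\dim V^r(f_0)=s$.

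With the boundary fiber computed, I would invoke upper semicontinuity of fiber dimension for the proper family $\overline{\mathcal{V}^r_g}\to\widetilde{\cR}_g$: the dimension of the special fiber $V^r(f_0)$ dominates that of the generic fiber $V^r(f)$, giving $\dim V^r(f)\leq s$, and combined with the lower bound this yields $\dim V^r(f)=s$. For the emptiness statement, when $g-1-\frac{r(r+1)}{2}<0$ the adjusted Brill-Noether number above is negative, so the pointed Brill-Noether locus on the pointed Brill-Noether general curve $[Y_1,x_1]$ is empty; hence $V^r(f_0)=\emptyset$, and by properness of the family the generic fiber $V^r(f)$ is empty as well.

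The main obstacle I anticipate is the verification that the forgetful map from $V^r(f_0)$ to the product of Picard varieties has finite fibers, i.e. that once the aspects' line bundles and their ramification at $x_1,x_2$ are fixed, only finitely many compatible Prym limit linear series remain. This rests on the genericity built into Proposition \ref{Prym limits}, so that the relevant $h^0$'s equal $r+1$ on the $Y_i$ and the elliptic bridge contributes a single admissible subspace, together with the requirement that the boundary element is generic enough for $[Y_1,x_1]$ and $[Y_2,x_2]$ to be pointed Brill-Noether general. Guaranteeing that the semicontinuity argument applies also requires that the chosen partial compactification $\overline{\mathcal{V}^r_g}$ is proper over $\widetilde{\cR}_g$ along this degeneration, so that $V^r(f_0)$ genuinely records the limits of $V^r(f)$.
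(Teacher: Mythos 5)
Your argument is correct and follows essentially the same route as the paper: the lower bound from the Lagrangian degeneracy locus description, and the upper bound (and emptiness when $g-1-\frac{r(r+1)}{2}<0$) by degenerating to a generic cover in $\Delta_1$, applying Proposition \ref{Prym limits} to pin down the vanishing orders, identifying the boundary fiber with a pointed Brill-Noether locus on the general pointed curve $[Y_1,x_1]$ of adjusted Brill-Noether number $g-1-\frac{r(r+1)}{2}$, and concluding by semicontinuity. Your write-up merely makes explicit some steps (finiteness of the forgetful map's fibers, the adjusted Brill-Noether count) that the paper leaves implicit.
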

\begin{proof}
	Consider the partial compactification $\overline{\mathcal{V}}^r_g$ over the boundary divisor $\Delta_1$ and consider the forgetful morphism 
	\[ \overline{\mathcal{V}}^r_g \rightarrow \rr_g.\]
	Using the previous proposition, we obtain that the fibre over a generic double cover in $\Delta_1$ has dimension $g-1-\frac{r(r+1)}{2}$. It implies that the generic fiber satisfies 
	\[ \dim V^r(f) \leq g-1-\frac{r(r+1)}{2}. \]
	Realizing $V^r(f)$ as a Lagrangian degeneracy locus, we obtain the converse inequality 
    \[ \dim V^r(f) \geq g-1-\frac{r(r+1)}{2}. \]
	This concludes the proof.
\end{proof}

Let $ \textbf{a} = (0 \leq a_0 < a_1 < \cdots < a_r \leq 2g-2)$ be the sequence of vanishing orders defined by $a_i = 2i$. We proved in Proposition \ref{Prym limits} that over a generic element $[Y\cup_x E, \OO_Y,E]$ of $\Delta_1\subseteq \rr_g$ the locus of smoothable Prym limit linear series is (after removing the base-locus) isomorphic to 
 \[ W^r_{g+r-1,\textbf{a}}(Y)\coloneqq \left\{L\in \textrm{Pic}^{g+r-1}(Y) \ | \ h^0(Y, L-2i\cdot x) \geq r+1-i \ \forall \ 0\leq i \leq r \right\}.\]

We can extend these results to the moduli space $\mathcal{R}_{g,2}$. The Prym limit linear series for double covers $[f\colon \widetilde{C}\rightarrow C]$ in $\mathcal{R}_{g,2}$ are described as in Definition \ref{def: Prym-limit} to be limit linear series respecting the norm condition. 

We define the universal Prym-Brill-Noether locus to be 
\[ \mathcal{V}^r_{g,2} \coloneqq \left\{[C,\eta, x+y, L] \ | \ [C,\eta,x+y] \in \mathcal{R}_{g,2} \ \textrm{and} \ L\in V^r(C,\eta, x+y) \right\}. \]
As in the unramified case, this can be fitted into a diagram 
\[
\begin{tikzcd}
	\mathcal{V}^r_{g,2}  \arrow{r}{} \arrow[swap]{d}{} & \widetilde{\cR}_{g,2}\times_{\cM^{\textrm{ct}}_{2g}}\mathcal{G}^r_{2g-2}(\cM^{\textrm{ct}}_{2g}) \arrow{d}{} \\
	\cR_{g,2} \arrow{r}{i}&  \widetilde{\cR}_{g,2}
\end{tikzcd}
\]
and our next goal is to understand what limit linear series appear in the closure of $	\mathcal{V}^r_{g,2} $. We are interested in understanding this closure over a generic element in the boundary divisor $\Delta_{0:g,\left\{\mathcal{O}\right\}} \subseteq \rr_{g,2}$, as defined in \cite[Section 2]{BudKodPrym}. The associated double cover for this element is of the form 
\[ f\colon Y_1\cup_{p_1}\widetilde{R}\cup_{p_2}Y_2 \rightarrow Y\cup_pR  \]
where 
\begin{itemize}
	\item $f\colon \widetilde{R} \rightarrow R$ is a double cover between rational curves, ramified at two points $\widetilde{x}, \widetilde{y} \in \widetilde{R}$. The points $p_1, p_2$ are the points in the preimage of $p$. 
	\item the marked curve $[Y,p]$ is generic in $\cM_{g,1}$, and $[Y_1, p_1]$, $[Y_2, p_2]$ are two copies of it. The map $f$ identifies these copies with $[Y,p]$. 
\end{itemize}

Analogously to Definition \ref{def: Prym-limit}, we can define Prym limit linear series for this double cover $$f\colon Y_1\cup_{p_1}\widetilde{R}\cup_{p_2}Y_2 \rightarrow Y\cup_pR.$$

	\begin{defi} \label{def: Prym-limit ramified}
		Let $f\colon Y_1\cup_{p_1}\widetilde{R}\cup_{p_2}Y_2 \rightarrow Y\cup_pR$ as above. A Prym limit $g^r_{2g-2}$ on $f$, denoted $\ell$, is a limit $g^r_{2g-2}$ on $Y_1\cup_{p_1}\widetilde{R}\cup_{p_2}Y_2$ satisfying the following condition: 
		\begin{enumerate}
			\item  Via the identification of $Y_1$ and $Y_2$ with $Y$, the $Y_1$ and $Y_2$ aspects of $\ell$ satisfy: 
			\[ L_{Y_1}\otimes L_{Y_2} \cong \omega_Y\text{\large(}(2g-2)p \text{\large)} \]
		\end{enumerate} 
		We denote by $PG^r_{2g-2}(f)$ the space of Prym limit $g^r_{2g-2}$ on $f$. Furthermore, we denote by $V^r(f) \subseteq PG^r_{2g-2}(f)$ the subset of elements that appear in the closure of $\mathcal{V}^r_{g,2}$ over $f$. 
	\end{defi}

Our next goal is to describe the elements of $V^r(f)$, as we did in the unramified case in Proposition \ref{Prym limits}. To achieve this, we will use the following observations from Section \ref{sec: PBN ramified}:
\begin{itemize}
	\item For an element $f\colon \widetilde{C} \rightarrow C$ in $\cR_{g,2}$, the following Prym-Brill-Noether loci are identified via Serre duality: 
	
	\[ V^r(f) \rightarrow V^{r+1}(f, x+y), \ \  L \mapsto \iota^*L\]
	
   \item Because all sections in $H^0(C, \omega_C(x))$ vanish at $x$, the space $V^{r+1}(f, x+y)$ is identified with 
   
   \[\left\{ L \in \textrm{Pic}^{2g}(\widetilde{C}) \ | \ \textrm{Nm}_f(L) = \omega_C(x+y), h^0(\widetilde{C}, L) \geq r+2 \ \textrm{and} \  h^0(\widetilde{C}, L(-\widetilde{x}-\widetilde{y})) \geq r+1\right\} \]
   via adding the superfluous condition $ h^0(\widetilde{C}, L(-\widetilde{x}-\widetilde{y})) \geq r+1$. 
\end{itemize}

We define the space 
\[ \mathcal{V}^r_{g,2}(\textbf{x}+\textbf{y}) \coloneqq  \left\{[f\colon \widetilde{C} \rightarrow C, L] \ | \ [f] \in \mathcal{R}_{g,2}, \ \textrm{branched at points} \ x,y \ \textrm{and} \ L \in V^{r}(f, x+y)\right\}. \]
Via Serre duality, we have an isomorphism between $\mathcal{V}^r_{g,2}$ and $\mathcal{V}^{r+1}_{g,2}(\textbf{x}+\textbf{y})$. Hence, in order to understand how $ \mathcal{V}^r_{g,2}$ degenerates to the boundary, it is sufficient to answer this question for the other moduli space. 

We consider again the double cover $f\colon Y_1\cup_{p_1}\widetilde{R}\cup_{p_2}Y_2 \rightarrow Y\cup_pR$ associated to a generic element in the boundary divisor $\Delta_{0:g,\left\{\mathcal{O}\right\}}$.
\begin{defi} \label{def: PG-x+y}
	Let $f\colon Y_1\cup_{p_1}\widetilde{R}\cup_{p_2}Y_2 \rightarrow Y\cup_pR$ as above. We define $PG^r(f, x+y)$ to be the space of limit $g^r_{2g}$ on $Y_1\cup_{p_1}\widetilde{R}\cup_{p_2}Y_2$ satisfying the following conditions: 
\begin{enumerate} 
	\item For the $\widetilde{R}$-aspect $(V_{\widetilde{R}},L_{\widetilde{R}})$ we have 
	\[ h^0\text{\large(}\widetilde{R}, V_{\widetilde{R}}(-\widetilde{x}-\widetilde{y})\text{\large)}\geq r.\]
	\item  Via the identification of $Y_1$ and $Y_2$ with $Y$, the $Y_1$- and $Y_2$- aspects of $L$ satisfy: 
	\[ L_{Y_1}\otimes L_{Y_2} \cong \omega_Y\text{\large(}(2g+2)p \text{\large)}. \]
\end{enumerate}
We denote by $V^r(f,x+y) \subseteq PG^r(f, x+y)$ the set of those linear series that appear in the closure of $\mathcal{V}^r_{g,2}(\textbf{x}+\textbf{y})$ above the boundary divisor $\Delta_{0:g,\left\{\mathcal{O}\right\}}$. 
\end{defi}

In this case, we have an analogue of Proposition \ref{Prym limits}:

\begin{prop} \label{Prym limits ramified} Let $f\colon Y_1\cup_{p_1}\widetilde{R}\cup_{p_2}Y_2\rightarrow Y\cup_pR$ a generic double cover in the boundary component $\Delta_{0:g,\left\{\OO\right\}}\subseteq \rr_{g,2}$ and let $\left\{l_1 \coloneqq (V_1,L_1), \widetilde{l} \coloneqq (V_{\widetilde{R}}, L_{\widetilde{R}}), l_2\coloneqq (V_2,L_2) \right\}$ be a generic limit linear series in $V^r(f,x+y)$. Then we have that 
	\begin{itemize}
		\item The vanishing orders of the $Y_1$-aspect at $p_1$ are $(g-r, g-r+2, \ldots, g+r)$,
		\item The vanishing orders of the $Y_2$-aspect at $p_2$ are $(g-r, g-r+2, \ldots, g+r)$ and 
		\item The vanishing orders of the $\widetilde{R}$-aspect at $p_1$ and $p_2$ are both $(g-r, g-r+2, \ldots, g+r)$.
	\end{itemize}
\end{prop}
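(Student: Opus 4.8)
The plan is to mirror the proof of Proposition~\ref{Prym limits}, exchanging the elliptic bridge $\widetilde{E}$ for the rational bridge $\widetilde{R}$ and keeping track of how the two ramification points $\widetilde{x},\widetilde{y}$ intervene. First I would record the consequences of the norm condition on each aspect. Since $\widetilde{R}\cong\mathbb{P}^1$, the $\widetilde{R}$-aspect is an honest $g^r_{2g}$ with $L_{\widetilde{R}}\cong\mathcal{O}_{\mathbb{P}^1}(2g)$, whereas $L_{Y_1}$ and $L_{Y_2}$ determine one another through $L_{Y_1}\otimes L_{Y_2}\cong\omega_Y((2g+2)p)$. I will write $a_\bullet,b_\bullet$ for the vanishing sequences of the $Y_1$- and $Y_2$-aspects at $p_1,p_2$, and $c_\bullet,c'_\bullet$ for those of the $\widetilde{R}$-aspect at $p_1,p_2$.

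Next, a dimension estimate feeds the Brill--Noether additivity. By the Serre duality of Section~\ref{sec: PBN ramified} and Theorem~\ref{trm: dim-ramified}, the generic fibre of $\mathcal{V}^r_{g,2}(\textbf{x}+\textbf{y})\to\mathcal{R}_{g,2}$ has dimension $s:=g-\tfrac{r(r+1)}{2}$, so the image of $V^r(f,x+y)$ in $\mathrm{Pic}^{2g}(Y_1)\times\mathrm{Pic}^{2g}(\widetilde{R})\times\mathrm{Pic}^{2g}(Y_2)$ is at least $s$-dimensional; hence $\rho(l_1,p_1)\geq s$ and $\rho(l_2,p_2)\geq s$. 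On a rational curve one always has $\rho(\widetilde{R},p_1,p_2)\geq 0$, since the two vanishing sequences of a $g^r_{2g}$ on $\mathbb{P}^1$ satisfy $\sum_i c_i+\sum_i c'_i\leq (r+1)2g$; this is the rational substitute for the bound $\rho(\widetilde{E},x_1,x_2)\geq -r$ used in the elliptic case. Brill--Noether additivity then gives $2g-r(r+1)=\rho(2g,r,2g)\geq \rho(l_1,p_1)+\rho(\widetilde{R},p_1,p_2)+\rho(l_2,p_2)\geq 2s$, and since $2s=2g-r(r+1)$ every inequality is an equality. Refinedness at the nodes yields $a_i+c_{r-i}=2g$ and $b_i+c'_{r-i}=2g$, while $\rho(\widetilde{R},p_1,p_2)=0$ forces the complementarity $c_i+c'_{r-i}=2g$; combining these gives $b_i=c_i$, $c'_i=c_i$ and the symmetric relation $c_i+c_{r-i}=2g$.

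The crux---and the step where the rational case genuinely departs from Proposition~\ref{Prym limits}---is to upgrade $c_i+c_{r-i}=2g$ to the exact sequence $c_i=g-r+2i$. Because equality holds in the $\mathbb{P}^1$-realizability bound, the bridge aspect is rigid: in an affine coordinate $t$ with $p_1=1$, $p_2=-1$, and with the ramification points placed at the fixed locus $\widetilde{x}=0$, $\widetilde{y}=\infty$ of the involution $t\mapsto -t$, one has $V_{\widetilde{R}}=\big\langle\,(t-1)^{c_i}(t+1)^{2g-c_i}\,\big\rangle_{i=0}^{r}$. I would then impose the superfluous condition $h^0(\widetilde{R},V_{\widetilde{R}}(-\widetilde{x}-\widetilde{y}))\geq r$ of Definition~\ref{def: PG-x+y}: the generator $\sigma_i$ satisfies $\sigma_i(\widetilde{x})=(-1)^{c_i}$ and has leading ($t^{2g}$) coefficient $1$, so a combination $\sum_i\lambda_i\sigma_i$ vanishes at both $\widetilde{x}$ and $\widetilde{y}$ exactly when $\sum_i(-1)^{c_i}\lambda_i=0$ and $\sum_i\lambda_i=0$. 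These two functionals have combined rank $\leq 1$ precisely when all the signs $(-1)^{c_i}$ coincide, i.e.\ when the $c_i$ share a common parity. This is the honest analogue of the elliptic identity $2x_1\equiv 2x_2$: on $\mathbb{P}^1$ the equivalence $x_1\equiv x_2$ is vacuous, so the argument must instead exploit the fixed points of the involution, and I expect verifying the rigidity of $V_{\widetilde{R}}$ together with this parity computation to be where the care is needed.

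Finally I would pin down the extreme orders exactly as in Proposition~\ref{Prym limits}: the genericity of $[Y,p]\in\mathcal{M}_{g,1}$ together with $\rho(l_2,p_2)=s$ forces $h^0(Y_2,L_2(-b_i\,p))=r+1-i$, and substituting $L_1\cong\omega_Y((2g+2)p)\otimes L_2^\vee$ into Riemann--Roch gives $a_r=g+r$; exchanging the roles of the two copies gives $a_0=g-r$, whence $c_0=2g-a_r=g-r$ and $c_r=2g-a_0=g+r$. Since $c_0<\cdots<c_r$ are $r+1$ integers of a common parity with $c_0=g-r$ and $c_r=g+r$, they must be $g-r,g-r+2,\dots,g+r$. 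The relations $a_i=2g-c_{r-i}$, $b_i=c_i$ and $c'_i=c_i$ then propagate this single sequence to all four aspects, which is exactly the assertion.
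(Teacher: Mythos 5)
Your argument is correct and follows the paper's skeleton almost step for step: the dimension bound coming from Theorem~\ref{trm: dim-ramified}, the Brill--Noether additivity forcing every inequality to be an equality and the limit series to be refined, and the determination of the extreme orders $a_0=g-r$, $a_r=g+r$ from the norm condition and Riemann--Roch, exactly as in Proposition~\ref{Prym limits}. Where you genuinely diverge is the crux, the ``gap two'' condition. The paper extracts $a_{i+1}-a_i\geq 2$ purely numerically: it regards $V_{\widetilde{R}}(-\widetilde{x}-\widetilde{y})$ as an $r$-dimensional series in a line bundle of degree $2g-2$, notes that its vanishing sequences at $p_1,p_2$ are obtained from those of $V_{\widetilde{R}}$ by deleting one entry each, and applies the two-point pairing bound (each matched pair of orders sums to at most $2g-2$). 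You instead exploit rigidity of the bridge aspect: tightness of $c_i+c'_{r-i}=2g$ forces $V_{\widetilde{R}}$ to be spanned by the sections $(t-1)^{c_i}(t+1)^{2g-c_i}$, and the codimension-$\leq 1$ condition $h^0(\widetilde{R},V_{\widetilde{R}}(-\widetilde{x}-\widetilde{y}))\geq r$ of Definition~\ref{def: PG-x+y} becomes the statement that the two evaluation functionals at $\widetilde{x}=0$ and $\widetilde{y}=\infty$ span at most a line, which holds iff all $c_i$ share a parity. Both routes are valid and give the same sequence; yours is more explicit and makes transparent why the fixed points of the involution play the role of the elliptic relation $2x_1\equiv 2x_2$ in Proposition~\ref{Prym limits}, while the paper's stays at the level of vanishing sequences and needs no choice of coordinates. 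One small imprecision to fix: from the three relations you state, one deduces $c'_j=a_j$ and $b_i=c_i$, but the symmetric identity $c'_i=c_i$ (equivalently $a_i+a_{r-i}=2g$) does not yet follow at that point --- it needs the norm-condition symmetry between the two $Y$-aspects, which you only bring in later; since the final sequence $c_i=g-r+2i$ is symmetric, this ordering issue is harmless but should be rearranged in a polished write-up.
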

\begin{proof}
   We know from Theorem \ref{trm: dim-ramified} that for a generic double cover $g$ in $\cR_{g,2}$ branched over $x'$ and $y'$, the dimension of the locus $V^r(g,x'+y')$ is $g-\frac{r(r+1)}{2}$. This implies the inequality 
   \[ \dim V^r(f,x+y) \geq  s\coloneqq g-\frac{r(r+1)}{2} . \]
   Because we have 
   	\[ L_{Y_1}\otimes L_{Y_2} \cong \omega_Y\text{\large(}(2g+2)p \text{\large)} \]
  it follows that one of the line bundles uniquely determines the other. Combined with the inequality of the dimension, this implies 
  \[ \rho(L_{Y_1}, p_1) \geq s, \ \textrm{and} \ \rho(L_{Y_2}, p_2) \geq s. \]
  We use Brill-Noether additivity to obtain 
  \[ \rho(2g,r,2g) = 2s \geq \rho(L_{Y_1}, p_1) + \rho(L_{Y_2}, p_2) + \rho(\widetilde{l}, p+1,p_2) \geq s + s + 0.  \] 
  In particular, all inequalities are in fact equalities and a generic limit linear series in $V^r(f,x+y)$ is refined.
  
  We denote by $0\leq a_0 < a_1<\cdots <a_r \leq 2g$ the vanishing orders of the $Y_1$-aspect at the point $p_1$. As in Proposition \ref{Prym limits} we have that $a_0 = g-r$ and $a_r = g+r$. 
  
  In order to conclude the proposition, it is sufficient to show $a_{i+1}-a_i \geq 2$ for every $0\leq i \leq r-1$. For this we look at the $\widetilde{R}$-aspect of the limit linear series. Its vanishing orders at $p_1$ are $(2g-a_r, \ldots, 2g-a_0)$ while the vanishing orders at $p_2$ are $(a_0,\ldots, a_r)$. We consider the subspace 
  \[ V_{\widetilde{R}}(-\widetilde{x}-\widetilde{y}) \subseteq V_{\widetilde{R}} \]
  which is of dimension $r$ because of the condition 
  	\[ h^0\text{\large(}\widetilde{R}, V_{\widetilde{R}}(-\widetilde{x}-\widetilde{y})\text{\large)}\geq r\]
  appearing in Definition \ref{def: PG-x+y}.
  
  We look at the vanishing orders at $p_1, p_2$ of the sections in $V_{\widetilde{R}}(-\widetilde{x}-\widetilde{y})\text{\large)}$. These vanishing orders are subsets of $(2g-a_r, \ldots, 2g-a_0)$ and $(a_0,\ldots, a_r)$ missing exactly one element. 
  
  The orders at $p_1$ and those at $p_2$ of $V_{\widetilde{R}}(-\widetilde{x}-\widetilde{y})\text{\large)}$ can be paired so that the sum of each pair is less or equal to $2g-2$. This implies that the vanishing orders at $p_1$ are  $(2g-a_r, \ldots, 2g-a_1)$ and the orders at $p_2$ are $(a_0,\ldots, a_{r-1})$. This furthermore implies that 
  \[ a_i + 2g-a_{i+1} \leq 2g-2 \]
  and hence $a_{i+1}-a_i \geq 2$ for each $0\leq i \leq r-1$. This implies the conclusion.
\end{proof}

Using Serre duality and the previous result, we can describe the linear series appearing as limits of $\mathcal{V}^r_{g,2}$ to the boundary. 

\begin{prop} Let $f\colon Y_1\cup_{p_1}\widetilde{R}\cup_{p_2}Y_2\rightarrow Y\cup_pR$ a generic double cover in the boundary component $\Delta_{0:g,\left\{\OO\right\}}\subseteq \rr_{g,2}$ and let $\left\{l_1 \coloneqq (V_1,L_1), \widetilde{l} \coloneqq (V_{\widetilde{R}}, L_{\widetilde{R}}), l_2\coloneqq (V_2,L_2) \right\}$ be a generic limit linear series in $V^r(f)$. Then we have that 
	\begin{itemize}
		\item The vanishing orders of the $Y_1$-aspect at $p_1$ are $(g-r-1, g-r+1, \ldots, g+r-1)$,
		\item The vanishing orders of the $Y_2$-aspect at $p_2$ are $(g-r-1, g-r+1, \ldots, g+r-1)$ and 
		\item The vanishing orders of the $\widetilde{R}$-aspect at $p_1$ and $p_2$ are both $(g-r-1, g-r+1, \ldots, g+r-1)$.
	\end{itemize}
\end{prop}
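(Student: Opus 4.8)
The plan is to read off the statement from Proposition~\ref{Prym limits ramified} by Serre duality, in direct analogy with how Proposition~\ref{Prym limits} governs the unramified case. Recall the two observations from Section~\ref{sec: PBN ramified} collected just above: Serre duality yields an isomorphism $V^r(f)\cong V^{r+1}(f,x+y)$, and a member of $V^{r+1}(f,x+y)$ is recovered from a member $L$ of $V^r(f)$ by applying $\iota^*$ and adding the ramification divisor $\widetilde x+\widetilde y$, the inverse being $L'\mapsto \iota^*\bigl(L'(-\widetilde x-\widetilde y)\bigr)$. Under these identifications the generic Prym limit $g^r_{2g-2}$ in $V^r(f)$ over the boundary cover $f\colon Y_1\cup_{p_1}\widetilde R\cup_{p_2}Y_2\rightarrow Y\cup_pR$ corresponds to a generic limit $g^{r+1}_{2g}$ in $V^{r+1}(f,x+y)$ over the same cover, so it suffices to determine the vanishing orders of the latter and transport them.

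First I would apply Proposition~\ref{Prym limits ramified} with $r$ replaced by $r+1$. This shows that the generic member of $V^{r+1}(f,x+y)$ is a refined limit linear series whose $Y_1$-, $Y_2$- and $\widetilde R$-aspects all have vanishing orders $(g-r-1,g-r+1,\dots,g+r+1)$ at the nodes $p_1,p_2$. Next I would transport this datum through the inverse isomorphism $L'\mapsto\iota^*\bigl(L'(-\widetilde x-\widetilde y)\bigr)$. The ramification points $\widetilde x,\widetilde y$ lie in the interior of the rational bridge, so on the bridge the relevant space is the $(r+1)$-dimensional subspace $V_{\widetilde R}(-\widetilde x-\widetilde y)\subseteq V_{\widetilde R}$ singled out in Definition~\ref{def: PG-x+y}; this is exactly the computation already carried out inside the proof of Proposition~\ref{Prym limits ramified}, and it deletes precisely the largest vanishing order at each of $p_1$ and $p_2$, leaving the $\widetilde R$-aspect orders $(g-r-1,g-r+1,\dots,g+r-1)$. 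The covering involution $\iota$ fixes $\widetilde R$ setwise, interchanges $p_1$ and $p_2$, and swaps $Y_1$ with $Y_2$, so it only permutes these (symmetric) sequences.

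To conclude I would pin down the $Y_1$- and $Y_2$-aspects of the resulting member of $V^r(f)$. Here the aspects have degree $2g-2$, and refinedness forces the complementarity $a_i^{Y_1}(p_1)+a_{r-i}^{\widetilde R}(p_1)=2g-2$ at the node; substituting $a_j^{\widetilde R}(p_1)=g-r-1+2j$ gives $a_i^{Y_1}(p_1)=g-r-1+2i$, that is the sequence $(g-r-1,g-r+1,\dots,g+r-1)$, and symmetrically for $Y_2$ at $p_2$. Since these orders satisfy the complementarity with equality, the member is indeed refined, which closes the argument and matches the claimed sequences on all three aspects.

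The main obstacle is the middle step: one must verify that the combined operation --- applying $\iota^*$ and subtracting the two ramification points on the bridge --- acts on the refined limit linear series by removing exactly the top vanishing order along $\widetilde R$, with the accompanying drop in degree (from $2g$ to $2g-2$ on each aspect) absorbed consistently at the nodes so that the Eisenbud--Harris compatibility is preserved. Granting this bookkeeping, which is precisely what is established within the proof of Proposition~\ref{Prym limits ramified}, the three vanishing sequences follow at once.
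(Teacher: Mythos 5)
Your proposal is correct and takes essentially the same route as the paper: both identify the generic element of $V^r(f)$ with the generic element of $V^{r+1}(f,x+y)$ via Serre duality and then read off the vanishing orders from Proposition \ref{Prym limits ramified} applied with $r+1$ in place of $r$. The only difference is in the bookkeeping: the paper transports the orders directly to the $Y_1$-aspect by writing it as $\omega_Y(2g\cdot p)\otimes L_1^{-1}$ and applying Riemann--Roch on $Y$, whereas you first extract the $\widetilde{R}$-aspect from the subspace $V_{\widetilde{R}}(-\widetilde{x}-\widetilde{y})$ (exactly the computation inside the proof of Proposition \ref{Prym limits ramified}) and then recover the $Y_i$-aspects from the compatibility at the nodes; both yield the same sequences.
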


\begin{proof}
	The line bundle $\omega_Y(2g\cdot p)\otimes L_1^{-1}$ is the $Y_1$-aspect of a generic element in $V^{r+1}(f,x+y)$. Its vanishing orders at $p_1$ are described in the previous proposition. We have 
	\[ h^0\text{\large(}Y, \omega_Y(2g\cdot p)\otimes L_1^{-1} - (g-r-1+2i)\cdot p\text{\large)} = r+2-i. \]
	Using the Riemann-Roch Theorem we obtain 
	\[ h^0\text{\large(}Y, L_1-(g+r+1-2i)\cdot p\text{\large)} = (r+2-i)-(r+2-2i) = i.\] 
	This immediately implies the conclusion. 
\end{proof}

\section{Du Val curves and Prym-Brill-Noether generality} \label{sec: surfaces}

The results of \cite{Farkas-Tarasca-BNgen} tell us that we can find pointed Du Val curves that are pointed Brill-Noether general. Using Proposition \ref{Prym limits} we can further prove that a generic Du Val curve is Prym-Brill-Noether general.  

\textbf{Proof of Theorem \ref{trm: DuVal}:} Let $J$ be the unique smooth plane cubic passing through the points $p_1, \ldots, p_9$ and denote by $J'$ its strict transform. The elliptic curve $J'$ is contained in the linear system $|3l-E_1-\cdots-E_9|$. 

We consider $D$ to be a generic curve in the linear system 
\[ L_{g-1} \coloneqq |3(g-1)l-(g-1)E_1-\cdots -(g-1)E_8-(g-2)E_9| \]
and we know that $D$ and $J'$ intersect at an unique point $p$. We know from \cite[Theorem 1]{Farkas-Tarasca-BNgen} that $[D,p]$ is pointed Brill-Noether general. 

We look at the curve $D\cup_pJ'$ in $L_g$ and associate to it a Prym curve $[D\cup_pJ', \eta]$ in the boundary divisor $\Delta_1 \subseteq \rr_g$. Using Proposition \ref{Prym limits} and reasoning as in Corollary \ref{cor: dimPBN} we conclude that all Prym-Brill-Noether loci $V^r(D\cup_pJ', \eta)$ are empty or of expected dimension. This implies the conclusion. \hfill $\square$

We can use a similar approach to find Prym-Brill-Noether general curves on elliptic ruled surfaces. We start with an elliptic curve $J$ and a non-torsion line bundle $L$ in $\textrm{Pic}^0(J)$. We consider the decomposable ruled surface 
\[ \varphi\colon Y\coloneqq \mathbf{P}(\OO_J\oplus L) \rightarrow J\]
which comes equipped with two disjoint sections $J_0$ and $J_1$. We choose a point $r \in J$ and consider $f\coloneqq \varphi^{-1}(r)$ the corresponding ruling of $Y$. 

In the situation above, the linear system $|gJ_0+f|$ consists of curves of genus $g$ and moreover, for a suitable choice of a $2$-torsion line bundle, a generic such curve is Prym-Brill-Noether general. 
\begin{prop} \label{prop:ell-fibration} In the setting outlined above, let $C \in \cM_g$ be a generic element of the linear system $|gJ_0+f|$. Then there exists a $2$-torsion line bundle $\eta$ on $C$ such that $[C,\eta]$ is Prym-Brill-Noether general.  
\end{prop}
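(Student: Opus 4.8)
The plan is to mimic the proof of Theorem \ref{trm: DuVal}, degenerating a generic $C \in |gJ_0+f|$ to a curve of compact type lying in the boundary divisor $\Delta_1 \subseteq \rr_g$ and then invoking Proposition \ref{Prym limits}. Concretely, I would degenerate $C$ to the nodal curve $D \cup_p J_0$, where $J_0 \cong J$ is the elliptic section and $D$ is a generic member of $|(g-1)J_0+f|$. Since $\deg L = 0$ the ruled surface has invariant $e=0$, so $J_0^2 = 0$, while $J_0 \cdot f = 1$ and $f^2 = 0$; together with $K_Y \equiv -2J_0$, adjunction gives $D \cdot J_0 = ((g-1)J_0+f)\cdot J_0 = 1$ and $2g(D)-2 = ((g-1)J_0+f)\cdot((g-3)J_0+f) = 2g-4$, hence $g(D)=g-1$. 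Thus $D$ meets $J_0$ transversally at a single point $p$, and $D \cup_p J_0$ is a genus $g$ curve of compact type of exactly the shape governed by Proposition \ref{Prym limits} (with the genus $g-1$ aspect played by $D$ and the elliptic tail by $J_0$). An analogous adjunction computation shows the generic member of $|gJ_0+f|$ is smooth of genus $g$.

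The crucial input is that the pointed curve $[D,p]$, with $p = D \cap J_0$, is pointed Brill-Noether general, and I expect this to be the main obstacle. I would establish it exactly as in \cite{Farkas-Tarasca-BNgen}: by a further induction within the linear systems $|jJ_0+f|$ on $Y$ for $1 \le j \le g-1$, repeatedly splitting off the elliptic section $J_0$ while tracking the marked point, and reducing to a small-genus base case where the pointed Brill-Noether condition is verified directly. All remaining steps are formal once this generality is in hand.

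Granting pointed Brill-Noether generality of $[D,p]$, I would equip $D \cup_p J_0$ with the Prym structure $\eta$ that is trivial on $D$ and equal to a nontrivial $2$-torsion line bundle on $J_0$, so that $[D \cup_p J_0, \eta]$ is a genuine element of $\Delta_1 \subseteq \rr_g$. Applying Proposition \ref{Prym limits}, the space of smoothable Prym limit $g^r_{2g-2}$ over this cover is identified, after removing the base locus, with the pointed Brill-Noether locus $W^r_{g+r-1,\textbf{a}}(D)$ for the vanishing sequence $a_i = 2i$. Pointed Brill-Noether generality of $[D,p]$ then forces every such locus to be empty or of the expected dimension $g-1-\frac{r(r+1)}{2}$.

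Finally, reasoning exactly as in Corollary \ref{cor: dimPBN}, the fibre of the forgetful map $\overline{\mathcal{V}}^r_g \rightarrow \rr_g$ over $[D \cup_p J_0, \eta]$ has dimension at most $g-1-\frac{r(r+1)}{2}$, so by upper semicontinuity a generic smooth deformation $[C,\eta]$ (recall $\rr_g \to \overline{\textbf{M}}_g$ is finite, so $\eta$ deforms together with $C$) satisfies $\dim V^r(C,\eta) \le g-1-\frac{r(r+1)}{2}$ for every $r$, with emptiness whenever this number is negative. Combined with the opposite inequality arising from the realization of $V^r(C,\eta)$ as a Lagrangian degeneracy locus, this shows that each $V^r(C,\eta)$ is empty or of expected dimension; that is, $[C,\eta]$ is Prym-Brill-Noether general for a generic $C \in |gJ_0+f|$.
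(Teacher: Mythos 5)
Your proposal is correct and follows essentially the same route as the paper: degenerate to $D\cup_p J_0$ with $D$ generic in $|(g-1)J_0+f|$, place the nontrivial $2$-torsion on the elliptic tail, and apply Proposition \ref{Prym limits} together with the semicontinuity argument of Corollary \ref{cor: dimPBN}. The one step you single out as the main obstacle --- the pointed Brill-Noether generality of $[D,p]$ --- is precisely \cite[Theorem 2]{Farkas-Tarasca-BNgen}, which the paper cites directly rather than reproving.
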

\begin{proof}
	We consider $D$ to be a generic element in the linear system $|(g-1)J_0 +f|$. Then $D$ is a smooth curve of genus $g-1$, it intersects $J_0$ in a unique point $p$ and moreover $[D,p]$ is pointed Brill-Noether general, see \cite[Theorem 2]{Farkas-Tarasca-BNgen}. 
	
	In particular, we consider a Prym curve $[D\cup_pJ_0, \eta]$ in the boundary divisor $\Delta_1 \subseteq \rr_g$ and apply Proposition \ref{Prym limits} to conclude that all Prym-Brill-Noether loci $V^r(D\cup_pJ_0, \eta)$ are either empty or have the expected dimension. This concludes the proof.
\end{proof} 

The curves appearing in Theorem \ref{trm: DuVal} and in Proposition \ref{prop:ell-fibration} are contained in the closure of the locus of curves lying on $K3$ surfaces. As a consequence, we have the following result:
\begin{cor} Let $(S,H)$ be a smooth polarized $K3$ surface of degree $2g-2$ and let $C$ be a generic curve in the linear system $|H|$. Then there exists a $2$-torsion line bundle $\eta$ on $C$ such that $[C,\eta]$ is Prym-Brill-Noether general.  
\end{cor}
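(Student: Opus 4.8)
The plan is to deduce the corollary from Theorem \ref{trm: DuVal} by a specialization argument, exploiting that Prym-Brill-Noether generality is an open condition on $\cR_g$ and that the curves produced in that theorem degenerate the sections of a $K3$ surface. First I would record the openness. For each fixed $r$ the fibre dimension of the universal Prym-Brill-Noether locus $\mathcal{V}^r_g \to \cR_g$ is upper semicontinuous on the base, since this morphism is proper; hence the locus of $[f]$ for which $V^r(f)$ strictly exceeds its expected dimension $g-1-\tfrac{r(r+1)}{2}$ (with the convention that the empty locus has negative dimension) is closed, and only finitely many values of $r$ are relevant. The Prym-Brill-Noether general locus is therefore the open complement of a finite union of closed subsets of $\cR_g$. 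As the forgetful map $\cR_g \to \cM_g$ is finite and \'etale, hence open, its image
\[ \mathcal{U} \coloneqq \left\{[C] \in \cM_g \ \middle| \ \exists\, \eta,\ [C,\eta] \ \textrm{is Prym-Brill-Noether general} \right\} \]
is an open subset of $\cM_g$.

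Next I would invoke the irreducibility of the locus $\mathcal{K}_g \subseteq \cM_g$ of curves lying on $K3$ surfaces, together with the geometric input recalled just before the statement: the general Du Val curve of Theorem \ref{trm: DuVal} (and likewise the curves of Proposition \ref{prop:ell-fibration}) lies in the closure $\overline{\mathcal{K}_g}$. By Theorem \ref{trm: DuVal} there is a smooth Du Val curve $C_0$ carrying a $2$-torsion line bundle $\eta_0$ with $[C_0,\eta_0]$ Prym-Brill-Noether general, so $[C_0] \in \mathcal{U}$. Since $[C_0] \in \overline{\mathcal{K}_g}$ and $\mathcal{U}$ is an open neighbourhood of $[C_0]$, the open set $\mathcal{U}$ must meet $\mathcal{K}_g$; as $\mathcal{K}_g$ is irreducible, $\mathcal{U}$ contains the generic point of $\mathcal{K}_g$. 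Finally, because $\mathcal{K}_g$ is by definition the closure of the image of the irreducible family of pairs (polarized $K3$ surface, smooth curve in the polarization), a general curve $C$ in $|H|$ on a general polarized $K3$ surface $(S,H)$ of degree $2g-2$ maps to the generic point of $\mathcal{K}_g$. Hence $[C]\in \mathcal{U}$, which yields the existence of a $2$-torsion line bundle $\eta$ on $C$ making $[C,\eta]$ Prym-Brill-Noether general.

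The main obstacle is the geometric input that the Prym-Brill-Noether general curve $[C_0,\eta_0]$ genuinely specializes from Prym curves supported on $K3$ surfaces, rather than the weaker abstract membership $[C_0]\in\overline{\mathcal{K}_g}$: one must realize $C_0$ as a flat limit of smooth sections $C_t \in |H_t|$ of a degenerating family of $K3$ surfaces (the rational elliptic surface underlying the Du Val construction, respectively the decomposable elliptic ruled surface of Proposition \ref{prop:ell-fibration}, being the relevant degenerations of $K3$'s), and then track $\eta_0$. The latter extension is harmless, since $2$-torsion line bundles deform along the \'etale cover $\cR_g \to \cM_g$, so $\eta_0$ propagates to a $2$-torsion bundle $\eta_t$ on the nearby fibres. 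Once this identification is established, openness of $\mathcal{U}$ and irreducibility of $\mathcal{K}_g$ close the argument, and working with Proposition \ref{prop:ell-fibration} in place of Theorem \ref{trm: DuVal} supplies a second, independent family of examples.
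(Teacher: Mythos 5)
Your argument is correct and is essentially the paper's own: the paper justifies this corollary in a single sentence by observing that the curves of Theorem \ref{trm: DuVal} and Proposition \ref{prop:ell-fibration} lie in the closure of the $K3$ locus and concluding by openness of Prym-Brill-Noether generality, which is exactly the specialization you spell out (and you usefully make the semicontinuity step explicit). The only remarks worth adding are that this degeneration argument, in the paper as in your write-up, really establishes the statement for a \emph{general} polarized $K3$ surface $(S,H)$ rather than an arbitrary one, and that your closing worry about realizing $C_0$ as a flat limit of honest $K3$ sections is unnecessary: the abstract membership $[C_0]\in\overline{\mathcal{K}_g}$ together with openness of $\mathcal{U}$ already forces $\mathcal{U}\cap\mathcal{K}_g\neq\emptyset$.
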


\bibliography{main}
\bibliographystyle{alpha}
\end{document}